\def \xcirc{\objectmargin{0.1pc}\def\objectstyle{\sssize}\diagram
\squarify<1pt>{}\circled\enddiagram}
\newtheorem{theorem}{Theorem}[section]
\newtheorem{lemma}[theorem]{Lemma}
\newtheorem{proposition}[theorem]{Proposition}
\newtheorem{corollary}[theorem]{Corollary}
\theoremstyle{definition}
\newtheorem{definition}[theorem]{Definition}
\newtheorem{example}[theorem]{Example}
\theoremstyle{remark}
\newtheorem{remark}[theorem]{Remark}
\DeclareMathOperator{\CMod}{\mathsf{CMod}}
\DeclareMathOperator{\dg}{\mathsf{dg}}
\DeclareMathOperator{\ev}{\mathsf{ev}}
\DeclareMathOperator{\ide}{\mathsf{id}}
\DeclareMathOperator{\rank}{\mathsf{rank}}
\DeclareMathOperator{\bN}{\mathbb{N}}
\DeclareMathOperator{\bZ}{\mathbb{Z}}
\newcommand{\ot}{\otimes}
\def\xcirc{\objectmargin{0.1pc}\def\objectstyle{\sssize}\diagram
\squarify<1pt>{}\circled\enddiagram}
\begin{document}

\title{Twisted planes}


\author{Jorge A. Guccione}
\address{Departamento de Matem\'atica\\ Facultad de Ciencias Exactas y Naturales, Pabell\'on 1
- Ciudad Universitaria\\ (1428) Buenos Aires, Argentina.} \curraddr{} \email{vander@dm.uba.ar}
\thanks{Supported by  UBACYT 0294 and  PIP 5617 (CONICET)}

\author{Juan J. Guccione}
\address{Departamento de Matem\'atica\\ Facultad de Ciencias Exactas y Naturales\\
Pabell\'on 1 - Ciudad Universitaria\\ (1428) Buenos Aires, Argentina.} \curraddr{}
\email{jjgucci@dm.uba.ar}
\thanks{Supported by  UBACYT 0294 and  PIP 5617 (CONICET)}

\author{Christian Valqui}
\address{Pontificia Universidad Cat\'olica del Per\'u - Instituto de Matem\'atica y Ciencias
Afi\-nes.} \curraddr{} \email{cvalqui@pucp.edu.pe}
\thanks{Supported by PUCP-DAI-3490.}

\begin{abstract}
Let $k$ be a commutative ring. We find and characterize a new family of twisted planes (i. e.
associative unitary $k$-algebra structures on the $k$-module $k[X,Y]$, having $k[X]$ and $k[Y]$
as subalgebras). Similar results are obtained for the $k$-module of two variables power series
$k[[X,Y]]$.
\end{abstract}

\subjclass[2000]{16W35; 81R60}
\date{}

\dedicatory{}


\maketitle

\section*{Introduction}
Let $k$ be a commutative ring and let $A$, $B$ be unitary $k$-algebras. By definition, a
twisted tensor product of $A$ with $B$ (over $k$) is an algebra structure defined on $A\ot_k
B$, with unit $1\ot_k 1$, such that the canonical maps $i_A\colon A\to A\ot_k B$ and $i_B\colon
B\to A\ot_k B$ are algebra maps satisfying $a\ot b= i_A(a)i_B(b)$. This structure has been
formerly studied by many people with different motivations (see for instance \cite{Ca},
\cite{C-S-V}, \cite{G-G}, \cite{Ma}, \cite{Tam}, \cite{VD-VK}). On one hand it is the most
general solution to the problem of factorization of structures in the setting of associative
algebras. Consequently, a number of examples of classical and recently defined constructions in
ring theory fits into this construction. For instance, Ore extensions, skew group algebras,
smash products, etcetera (for the definition and properties of these structures we refer to
\cite{Mo} and \cite{Ka}). On the other hand it has been proposed as the natural representative
for the cartesian product of nonconmutative spaces, this being based on the existing duality
between the categories of algebraic affine spaces and commutative algebras, under which the
cartesian product of spaces corresponds to the tensor product of algebras. And last, but not
least, twisted tensor products arise as a tool for building algebras starting with simpler
ones.

Given algebras $A$ and $B$, a basic problem is to determine all the twisted tensor products of
$A$ with $B$ and classify them up to a natural equivalence relation. A {\em (noncommutative)
polynomial extension} of a $k$-algebra $B$ is a twisted tensor product of a polynomial ring
$k[Y]$ with $B$. A {\em twisted plane} is such an extension in which $B$ is also a polynomial
algebra $k[X]$. That is, an associative unitary algebra $C$, with underlying $k$-module
$k[X,Y]$, such that:

\begin{itemize}

\smallskip

\item the natural inclusions $i_{k[X]}\colon k[X]\to C$ and $i_{k[Y]}\colon k[Y]\to
C$ are algebra maps,

\smallskip

\item $i_{k[X]}(X^m)i_{k[Y]}(Y^n)=X^mY^n$ for each $n,m\ge 0$.

\smallskip

\end{itemize}
For instance, Ore extensions of $k[Y]$ are examples of twisted planes. The aim of this paper is
to begin the study of these extensions, with emphasis in the problem of the classification of
the twisted planes. Actually, we do not solve completely this problem in the present work, but
we give the first step on having found and characterized a new family of twisted planes. Besides the twisted
polynomial extensions, in this article we also consider twisted extensions of the power series
ring $k[[X]]$, finding a new family of twisted tensor products of $k[[X]]$ with $k[[Y]]$.
Indeed, the natural setting to deal with adelically complete algebras such as $k[[X]]$ is the
monoidal category of filtered $k$-modules which are complete with respect to the induced
topology. Consequently, in this case we look for adelically complete algebras $C$, with
underlying topological $k$-module $k[[X,Y]]$, such that:

\begin{itemize}

\smallskip

\item the natural inclusions $i_{k[[X]]}\colon k[[X]]\to C$ and $i_{k[[Y]]}\colon
k[[Y]]\to C$ are continuos algebra maps,

\smallskip

\item $i_{k[[X]]}(X^m)i_{k[[Y]]}(Y^n)=X^mY^n$ for each $n,m\ge 0$.

\smallskip

\end{itemize}

\smallskip

From now on we assume implicitly that all the maps are $k$-linear maps, all the algebras are
over $k$, and the tensor product over $k$ is denoted $\ot$, without any subscript.

\smallskip

The paper is organized as follows: In Section~1 we have compiled without proofs some of the
standard facts on twisted tensor products, thus making our exposition self-contained. In
particular we recall the definition of a twisting map $s\colon A\ot B\to B\ot A$ and we
establish the bijective correspondence $s\mapsto B\ot_s A$ between twisting maps and twisted
tensor products. We also set up notation and terminology. In Section~2 we begin the study of
the noncommutative polynomial extensions. Consider an algebra $A$ and maps $\alpha_j\colon A\to
A$ ($j\ge 0$). In Theorem~\ref{teorema 2.1}, we determine necessary and sufficient conditions
for the existence of a (necessarily unique) twisting map $s\colon k[Y]\ot A\to A\ot k[Y]$ such
that
$$
s(Y\ot a)=\sum_{j=0}^{\infty} \alpha_j(a)\ot Y^j.
$$
When $\alpha_j=0$ for all $j\ge 2$, then we reobtain the familiar conditions to build an Ore
extension of $A$. That is, $\alpha_1$ must be an algebra endomorphism and  $\alpha_0$ must be
an $(\alpha_1,\ide)$-derivation. After that we give several examples, and later on, in
Theorem~\ref{teorema 2.7} and Corollary~\ref{corolario 2.8}, we establish a method to construct
a twisting map with $\alpha_0=0$ and $\alpha_1=\ide$ beginning with a locally nilpotent
derivation. Section~3 is devoted to the study of twisted planes. Theorem~3.1 and 3.4 are two of
the main results of this paper. Applying them, in Corollary~\ref{corolario 3.6} we obtain all
the twisting maps
$$
s\colon k[Y]\ot k[X]\to k[X]\ot k[Y]
$$
such that $\alpha_0=0$, $\alpha_1$ is the evaluation at an element of $k$ and $\{n: \alpha_n
\ne 0\}$ is  finite. The aim of Section~4 is to determine all the twisting maps
$$
s\colon k[Y]\ot k[t]/\langle t^2\rangle \to k[t]/\langle t^2\rangle\ot k[Y].
$$
To do this we first study the twisted tensor products $k[t]/\langle t^2\rangle \ot_s A$, then
we consider in detail the case $A=k[Y]$, and use that $s$ is a twisting map if and only if
$\tau\xcirc s\xcirc \tau$ is, where $\tau$ denotes the flip. Finally, in Section~5 we begin the
study of the twisted tensor products of the power series ring $k[[Y]]$ with an algebra $A$, in
the monoidal category of complete filtered $k$-modules. In this case, each map
$$
s\colon k[[Y]]\hat{\ot} A \to A\hat{\ot} k[[Y]]
$$
(where $\hat{\ot}$ denotes the completed tensor product over $k$) is also determined by a
family of maps $\alpha_j\colon A\to A$ ($j\ge 0$), but the conditions that these maps must
satisfy to guarantee that  $s$ is a twisting map, which are found in Theorem~\ref{teorema 5.3},
are somewhat different from those required when dealing with noncommutative polynomial
extensions. In Theorem~\ref{teorema 5.4} we give a version for complete algebras of
Theorem~\ref{teorema 2.7}, but the main result of this section, and one of the main results of
the paper, is Theorem~\ref{teorema 5.6}, in which we obtain all the twisting maps
$$
s\colon k[[Y]]\hat{\ot} A \to A\hat{\ot} k[[Y]]
$$
with $\alpha_0=0$.

\subsection*{Acknowledgment} This research was began during a visit of the first two authors to the
``IMCA'' and the ``PUCP''. They specially thank to these institutions for their hospitality and
support during their visit. We also thank Professor Guillermo Corti\~nas for useful comments.

\section{Preliminaries}
\setcounter{equation}{0}
In this section we review some of the basic facts about twisted tensor products. For their
proofs we refer to \cite{C-S-V}, \cite{VD-VK} and \cite{C-I-M-Z}. Given an algebra $A$ we let
$\eta_A$ and $\mu_A$ denote the unit and the multiplication maps of $A$, respectively.

\smallskip

Let $A$ and $B$ be algebras. A {\em twisted tensor product} of $A$ with $B$ is an algebra
structure on the $k$-module $A\ot B$, such that the canonical maps
$$
i_A\colon A\to A\ot B\quad\text{and}\quad i_B\colon B\to A\ot B
$$
are algebra homomorphisms and $\mu\xcirc (i_A\ot i_B) = \ide_{A\ot B}$, where $\mu$ denotes the
multiplication map of the twisted tensor product.

\smallskip

Assume we have a tensor product of $A$ with $B$. Then, the map
$$
s\colon B\ot A\to A\ot B,
$$
define by $s: = \mu \xcirc (i_B\ot i_A)$, satisfies:

\begin{enumerate}

\smallskip

\item $s\xcirc (\eta_B\ot A) = A\ot \eta_B$ and $s\xcirc (B\ot \eta_A) = \eta_A\ot B$,

\smallskip

\item $s\xcirc (\mu_B\ot A) = (A\ot \mu_B)\xcirc (s\ot B)\xcirc (B\ot s)$,

\smallskip

\item $s\xcirc (B\ot \mu_A) = (\mu_A\ot B)\xcirc (A\ot s)\xcirc (s\ot A)$.

\smallskip

\end{enumerate}
A map satisfying these conditions is call a {\em twisting map}. Conversely, if
$$
s\colon B\ot A\to A\ot B
$$
is a twisting map, then $A\ot B$ becomes a twisted tensor product via
$$
\mu_s := (\mu_A\ot\mu_B)\xcirc (A\ot s \ot B).
$$
This algebra will be denoted $A\ot_s B$. Furthermore, these constructions are inverse one of
each other.

\smallskip

The twisted tensor product $A\ot_s B$ has the following universal property: Given algebra maps
$f\colon A\to C$ and $g\colon B\to C$ such that
$$
\mu_C\xcirc (g\ot f) = \mu_C\xcirc (f\ot g)\xcirc s,
$$
there is a unique morphism of algebras $h\colon A\ot_sB\to C$ satisfying
$$
f = h\xcirc i_A\qquad\text{and}\qquad g = h\xcirc i_B.
$$
Indeed, it is easy to check that $h = \mu_C\xcirc (f\ot g)$.

\smallskip

The twisting maps are the objects of a category. Let $s\colon B\ot A\to A\ot B$ and $t\colon
D\ot C\to C\ot D$ be twisting maps. A morphism $(f,g)\colon s\to t$ is a pair  of morphism of
algebras $f\colon A\to C$ and $g\colon B\to D$ such that
$$
t\xcirc (g\ot f)=(f\ot g)\xcirc s.
$$
The composition is the evident one. Two twisting maps $s,t\colon B\ot A\to A\ot B$ are said to
be {\em equivalent} if they are isomorphic. That is, if there exist automorphisms $f\colon A\to
A$ and $g\colon B\to B$ such that $t= (f^{-1}\ot g^{-1})\xcirc s \xcirc (g\ot f)$.

\smallskip

The following result is useful to check that a map $s\colon B\ot A\to A\ot B$ is a twisting
map, and will be used implicitly in this paper.

\begin{proposition}\label{proposicion 1.1} Let $s\colon B\ot A\to A\ot B$ be a map satisfying
conditions~(1) and~(2). If $(b_i)_{i\in I}$ generates $B$ as an algebra and
$$
s(b_i\ot aa')=(\mu_A\ot B)\xcirc (A\ot s)\xcirc (s\ot A)(b_i\ot a\ot a')
$$
for all $a,a'\in A$ and each index $i$, then $s$ is a twisting map.
\end{proposition}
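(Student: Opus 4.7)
The plan is to show that the set
$$
S := \{\, b\in B : s(b\ot aa')=(\mu_A\ot B)\xcirc (A\ot s)\xcirc (s\ot A)(b\ot a\ot a')\ \text{for all } a,a'\in A\,\}
$$
is a $k$-subalgebra of $B$. Since $S$ contains the generating family $(b_i)_{i\in I}$ by hypothesis, this forces $S=B$, which is exactly condition~(3); combined with the assumed conditions~(1) and~(2), this makes $s$ a twisting map.

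Closure of $S$ under $k$-linear combinations is immediate, because every map appearing in the defining equation is $k$-linear in the $B$-slot. To see that $1_B\in S$, I would use condition~(1): the left-hand side becomes $s(1_B\ot aa')=aa'\ot 1_B$, while chasing $1_B\ot a\ot a'$ through $s\ot A$, $A\ot s$ and $\mu_A\ot B$ using $s(1_B\ot -)=-\ot 1_B$ yields the same element $aa'\ot 1_B$.

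The crucial step is to show that $b,b'\in S$ implies $bb'\in S$. I would expand the left-hand side $s(bb'\ot aa')$ by first applying condition~(2) to rewrite it as $(A\ot\mu_B)\xcirc(s\ot B)\xcirc(B\ot s)(b\ot b'\ot aa')$, then use $b'\in S$ to unravel the inner $s(b'\ot aa')$ as an iterated application of $s$ to one $A$-factor at a time, and finally use $b\in S$ to unravel the resulting terms of the form $s(b\ot a_1 a_2)$. For the right-hand side $(\mu_A\ot B)\xcirc(A\ot s)\xcirc(s\ot A)(bb'\ot a\ot a')$, I would apply condition~(2) first to $s(bb'\ot a)$ and then, after acting with $A\ot s$, once more to the emerging expression $s(\beta b'\ot a')$, where $\beta\in A$ comes from the previous step. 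Writing both sides as quadruple sums indexed by these successive applications of $s$, one checks that they coincide term by term. The main (and essentially only) obstacle is keeping this index juggling straight, and the identification works precisely because condition~(2) provides the associativity-like compatibility between $s$ and $\mu_B$ that matches the two unravellings.
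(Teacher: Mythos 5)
Your argument is correct and is the standard one: the set $S$ of those $b\in B$ for which the compatibility with $\mu_A$ holds is a $k$-submodule containing $1_B$ (by condition~(1)) and closed under products (apply condition~(2) twice on the right-hand side, and the hypotheses for $b'$ and then for $b$ on the left-hand side), hence a subalgebra containing the generators $b_i$, hence all of $B$. The paper itself omits the proof and defers to the cited references, where exactly this subalgebra argument appears; the only flaw in your write-up is notational, namely in the right-hand-side unravelling the element you denote $\beta b'$ is really a product $\tilde b\, b'_{(1)}$ of a $B$-component $\tilde b$ of $s(b\ot a)$ with a $B$-component $b'_{(1)}$ of $s(b'\ot a)$ (both lie in $B$, not $A$, and the second factor is not $b'$ itself), which does not affect the validity of the argument.
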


In the last section of this paper we will consider twisting maps between complete filtrated
algebras. Hence, we will work in the monoidal category $\CMod$, of complete filtrated
\mbox{$k$-modules}, where $k$ is a commutative ring. An object of $\CMod$ is a $k$-module $M$
endowed with a filtration
$$
M=M_0\supseteq M_1\supseteq M_2\supseteq\cdots,
$$
such that each $M_i$ is a $k$-module and $M$ is complete with respect to the topology induced
by the filtration. A morphism in $\CMod$ is a continuous  map $f\colon M\to N$ (namely, a  map
$f$ satisfying the requirement that for each $i\ge 0$ there exists $n_i$ such that
$f(M_{n_i})\subseteq N_i$). The tensor product of $M$ with $N$ in $\CMod$, denoted by
$M\hat{\ot} N$, is the completation of the usual tensor product $M\ot N$, with respect to the
topology induced by the filtration
$$
(M\ot N)_i= \sum_{r+s=i}\jmath (M_r\ot N_s),
$$
where $\jmath\colon M_r\ot N_s\to M\ot N$ is the canonical map.

Standard modules are considered as objects of $\CMod$ via the filtration
$$
M=M_0\supseteq 0= 0 =\cdots\quad \text{($M_i=0$ for all $i>0$).}
$$
The power series ring $k[[Y]]$ is an algebra in $\CMod$ via the usual filtration
$$
k[[Y]]= K[[Y]]\supseteq Y k[[Y]]\supseteq Y^2k[[Y]]\supseteq\cdots.
$$
Moreover, the completed tensor product  $k[[Y]]\hat{\ot} M$ is canonically isomorphic to
$M[[Y]]$, for each standard module $M$.

\smallskip

All the discussion preceding Proposition~\ref{proposicion 1.1} is valid for arbitrary monoidal
categories.

\section{Non-commutative polynomial extensions}
\setcounter{equation}{0}
This section is devoted to the study of the twisting maps $k[Y]\ot A\to A\ot k[Y]$, where $A$
is an arbitrary algebra. Given a family of maps $(\alpha_j\colon A\to A)_{j\ge 0}$ and indices
$n_1,\dots, n_r\ge 0$, we set $|n_1,\dots,n_r| = n_1+\cdots+n_r$ and $\alpha_{n_1\dots
n_r}=\alpha_{n_1}\xcirc \cdots \xcirc \alpha_{n_r}$. Moreover we write
$$
\gamma_j^{(0)} = \delta_{0j}\ide\quad\text{and}\quad \gamma_j^{(r)} = \sum_{|n_1,\dots, n_r|=j}
\alpha_{n_1\dots n_r}\,\text{ for $r>0$.}
$$
Note that $\gamma_j^{(1)} = \alpha_j$.

\setcounter{equation}{0}

\begin{theorem}\label{teorema 2.1} Let $A$ be an algebra and $s\colon k[Y]\ot A\to A\ot k[Y]$
a twisting map. The equation
$$
s(Y\ot a) = \sum_{j=0}^{\infty} \alpha_j(a)\ot Y^j,
$$
defines a family of maps $\alpha_j\colon A\to A$, which satisfies:

\begin{enumerate}

\smallskip

\item For each $a\in A$ there exists $j_0\ge 0$, such that $\alpha_j(a) = 0$ whenever $j>j_0$.

\smallskip

\item $\alpha_j(1) = \delta_{j1}$, where $\delta_{j1}$ denotes the symbol of Kronecker.

\smallskip

\item For all $j\ge 0$ and all $a,b\in A$,
\begin{equation}
\alpha_j(ab) = \sum_{r=0}^{\infty}\alpha_r(a)\gamma_j^{(r)}(b).\label{eq0}
\end{equation}

\smallskip

\end{enumerate}
Moreover,
\begin{equation}
s(Y^r\ot a)= \sum_{j=0}^{\infty}\gamma_j^{(r)}(a)\ot Y^j\label{eq1}
\end{equation}
for all $r\ge 0$ and $a\in A$. Conversely, given maps $\alpha_j\colon A\to A$ ($j\ge 0$)
satisfying (1)--(3), the formula~\eqref{eq1} defines a twisting map.

\end{theorem}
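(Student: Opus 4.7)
The plan is to treat the two directions separately, and within each direction to reduce everything to a single combinatorial identity on the iterated compositions $\gamma_j^{(r)}$.

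For the forward direction I would begin with the easy observations. Condition (1) is automatic because $s(Y\ot a)$ lies in $A\ot k[Y]$, hence is a finite sum. Condition~(2) is obtained by applying the unit axiom $s\xcirc(k[Y]\ot\eta_A)=\eta_A\ot k[Y]$ (the second half of condition~(1) in the preliminaries) to $Y\ot 1$: the coefficients of the powers of $Y$ on the two sides must agree, forcing $\alpha_j(1)=\delta_{j1}$. The key step is then to establish formula~\eqref{eq1} by induction on $r$. The cases $r=0,1$ are the unit axiom and the definition of the $\alpha_j$. For the inductive step I would apply the pentagon-type identity~(2) of a twisting map to $Y\ot Y^{r-1}\ot a$, expand $s(Y^{r-1}\ot a)$ by the inductive hypothesis, then expand $s(Y\ot \gamma_m^{(r-1)}(a))$ by the definition of $\alpha_\ell$; collecting the powers of $Y$ and using the recursion
\[
\gamma_j^{(r)}=\sum_{\ell+m=j}\alpha_\ell\xcirc\gamma_m^{(r-1)},
\]
which is immediate from the definition of $\gamma_j^{(r)}$, gives~\eqref{eq1}. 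Having~\eqref{eq1}, condition~(3) of the theorem follows by applying the other pentagon-type identity~(3) of the twisting map to $Y\ot a\ot b$, expanding the right-hand side using~\eqref{eq1}, and matching coefficients of $Y^j$.

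For the converse, given maps $\alpha_j$ satisfying (1)--(3) I would first verify that the formula~\eqref{eq1} makes sense, i.e.\ that for fixed $a$ and $r$ only finitely many $\gamma_j^{(r)}(a)$ are nonzero. This is by induction on $r$ using property~(1) and the recursion just mentioned: the support of $j\mapsto\gamma_j^{(r-1)}(a)$ is a finite set $S$, and for each $b\in\{\gamma_m^{(r-1)}(a):m\in S\}$ the support of $\ell\mapsto\alpha_\ell(b)$ is finite, so the support of $\gamma_j^{(r)}(a)$ is contained in the Minkowski sum of finitely many finite sets. Hence $s$ is a well-defined $k$-linear map.

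It remains to check the three twisting-map axioms. For axiom~(1), $s(1\ot a)=a\ot 1$ is immediate, and $s(Y^r\ot 1)=1\ot Y^r$ reduces to the identity $\gamma_j^{(r)}(1)=\delta_{jr}$, which follows from property~(2) by an easy induction on $r$ using the recursion. For axiom~(2), the key combinatorial identity to check is
\[
\gamma_m^{(r+s)}=\sum_{\ell+j=m}\gamma_\ell^{(r)}\xcirc\gamma_j^{(s)},
\]
which is nothing more than partitioning a tuple $(n_1,\dots,n_{r+s})$ with $|n_1,\dots,n_{r+s}|=m$ into its first $r$ and last $s$ entries; axiom~(2) on $Y^r\ot Y^s\ot a$ then follows by a direct expansion of both sides via~\eqref{eq1}. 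Finally, by Proposition~\ref{proposicion 1.1} it suffices to verify axiom~(3) on elements of the form $Y\ot a\ot a'$, and this is exactly the content of property~(3): expanding the right-hand side of the axiom via~\eqref{eq1} produces $\sum_{r,j}\alpha_r(a)\gamma_j^{(r)}(a')\ot Y^j$, which by~\eqref{eq0} equals $\sum_j\alpha_j(aa')\ot Y^j=s(Y\ot aa')$.

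The main bookkeeping obstacle is the combinatorial identity for $\gamma_m^{(r+s)}$ and the well-definedness argument, since both require a careful handling of the iterated indices; once these are in place, everything else reduces to rewriting each twisting-map axiom in terms of the coefficients of powers of $Y$ and matching term by term.
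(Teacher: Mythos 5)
Your proposal is correct and follows essentially the same route as the paper: the forward direction by induction on $r$ via the recursion $\gamma_j^{(r)}=\sum_{\ell+m=j}\alpha_\ell\xcirc\gamma_m^{(r-1)}$, and the converse by establishing finiteness of the support of $j\mapsto\gamma_j^{(r)}(a)$ (the paper packages this as iterating an operator $f$ on $A[Y]$, which is the same induction) and then checking the axioms, with axiom (3) reduced to the generator $Y$ via Proposition~\ref{proposicion 1.1}. The only difference is that the paper leaves almost all of these verifications to the reader, whereas you supply them, including the splitting identity $\gamma_m^{(r+s)}=\sum_{\ell+j=m}\gamma_\ell^{(r)}\xcirc\gamma_j^{(s)}$ needed for axiom (2).
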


\begin{proof} Let $s$ be a twisting map. The formula for $s(Y^r\ot a)$ can be checked easily
by induction on $r$, using that $s(1\ot a)= a\ot 1$ and the compatibility of $s$ with the
multiplication of $k[Y]$. Item~(1) is immediate and items~(2) and (3) are consequences of the
compatibility of $s$ with the unit and the multiplication of $A$. Conversely, assume we have a
family of maps $(\alpha_j)_{j\ge 0}$ satisfying (1), (2) and (3). Let
$$
f\colon A[Y]\to A[Y]
$$
be the map given by $f(aY^j) = \sum_{i=0}^{\infty} \alpha_i(a)Y^{i+j}$, which is well defined
by (1). Since $f^r(a) = \sum_{j=0}^{\infty} \gamma_j^{(r)}(a)Y^j$ and obviously $f^r(a)\in
A[Y]$,  for each $a\in A$ and $r>0$, there exists $n\ge 0$ such that $\gamma_j^{(r)}(a)=0$
whenever $j>n$. This establishes the well-definition of formula~\eqref{eq1}. We leave the proof
that $s$ is a twisting map to the reader.
\end{proof}

\begin{remark}\label{remark 2.2} If $s\colon k[Y]\ot A\to A\ot k[Y]$ is a twisting map, then
$\ker(\alpha_0)$ is a subalgebra of $A$. Moreover,

\begin{itemize}

\smallskip

\item If $\alpha_0=0$, then $\alpha_1$ is a endomorphism of algebras.

\smallskip

\item Let $\nu>1$. If $\alpha_0 = 0$ and $\alpha_j = 0$ for $1<j<\nu$, then
$$
\alpha_{\nu}(ab)= \alpha_1(a)\alpha_{\nu}(b) + \alpha_{\nu}(a)\alpha_1^{\nu}(b).
$$

\end{itemize}

\end{remark}

\begin{example}\label{ejemplo 2.3} If $\alpha\colon A\to A$ is an algebra endomorphism and
$\delta\colon A\to A$ is an $\alpha$-derivation (that is $\delta(ab)=\delta(a)b
+\alpha(a)\delta(b)$), then there is a unique twisting map $s\colon k[Y]\ot A\to A\ot k[Y]$
such that
$$
s(Y\ot a)=\alpha(a) \ot Y + \delta(a)\ot 1\quad \text{for all $a\in A$.}
$$

\end{example}

\begin{example}\label{ejemplo 2.4} Let $A=k[t]/\langle t^2\rangle$. Consider the family of
maps $(\alpha_j\colon A\to A)_{j\ge 0}$, defined by
$$
\alpha_0=0,\quad \alpha_1=\ide,\quad \alpha_2(\lambda+\mu t)=\mu t\quad \text{and}\quad
\alpha_j =0\text{ for $j>2$.}
$$
The formula $s(Y\ot a)=\alpha_1(a)\ot Y+\alpha_2(a)\ot Y^2$ defines a twisting map.
\end{example}

Let $\alpha\colon A \to A$ be an algebra automorphism and let $(\beta_i\colon A\to A)_{i\ge 1}$
be a family of  maps. For $i_1,\dots,i_l\ge 1$, let
$$
\beta_{(i_1,\dots,i_l)} = \beta_{i_1}\xcirc \alpha^{-1}\xcirc \beta_{i_2}\xcirc \alpha^{-1}
\xcirc\cdots\xcirc\beta_{i_{l-2}}\xcirc\alpha^{-1}\xcirc\beta_{i_{l-1}}\xcirc\alpha^{-1}\xcirc
\beta_{i_l}
$$
Note that $\beta_{(i)} = \beta_i$. If $i_1,\dots,i_l = 1$ we will write $\beta_{(1)}^{(l)}$
instead of $\beta_{(1,\dots,1)}$. In particular $\beta_{(1)}^{(1)} = \beta_{(1)} = \beta_1$. We
also write $\beta_{(1)}^{(0)} = \alpha$.

\begin{lemma}\label{lema 2.5} Let $(\alpha_j\colon A\to A)_{j\ge 0}$ be the family of maps
defined by $\alpha_0 = 0$, $\alpha_1 = \alpha$ and
$$
\alpha_j=\sum_{l=1}^{j-1}\sum_{|i_1,\dots,i_l|=j-1}\beta_{(i_1,\dots,i_l)}\qquad\text{for $j\ge
2$.}
$$
Then, for all $j\ge r$,
\begin{equation*}
\gamma_j^{(r)} = L + \sum_{n_1,\dots n_r\ge 0\atop |n_1,\dots,n_r| = j-r} \beta_{(1)}^{(n_1)}
\xcirc\cdots\xcirc\beta_{(1)}^{(n_r)},
\end{equation*}
where $L$ is sum of compositions of $\alpha$'s, $\alpha^{-1}$'s and $\beta_i$'s, in which at
least one $\beta_i$ with $i>1$, appears.
\end{lemma}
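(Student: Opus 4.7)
The plan is induction on $r$. The key observation, which also takes care of the base case, is the decomposition
$$\alpha_m=\beta_{(1)}^{(m-1)}+L_m\qquad (m\ge 1),$$
where $L_m$ is a sum of compositions of $\beta_i$'s and $\alpha^{-1}$'s in which at least one $\beta_i$ with $i>1$ appears, and $L_1=0$. Indeed, for $m=1$ this reduces to $\alpha_1=\alpha=\beta_{(1)}^{(0)}$; for $m\ge 2$, the defining formula for $\alpha_m$ is a sum over $(l,i_1,\dots,i_l)$ with $|i_1,\dots,i_l|=m-1$, whose unique all-ones summand (forcing $l=m-1$) equals $\beta_{(1)}^{(m-1)}$, while every other summand contains some $\beta_{i_k}$ with $i_k>1$ and thus lies in $L_m$. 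This settles the case $r=1$, since $\gamma_j^{(1)}=\alpha_j$.

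For the inductive step, assume the claim for $r-1$. Splitting off the first factor and using $\alpha_0=0$, I would write
$$\gamma_j^{(r)}=\sum_{m=1}^{j-r+1}\alpha_m\xcirc\gamma_{j-m}^{(r-1)},$$
and then substitute $\alpha_m=\beta_{(1)}^{(m-1)}+L_m$ and, by the inductive hypothesis, $\gamma_{j-m}^{(r-1)}=L'+\sum_{|p_1,\dots,p_{r-1}|=j-m-r+1}\beta_{(1)}^{(p_1)}\xcirc\cdots\xcirc\beta_{(1)}^{(p_{r-1})}$, and expand. The main-times-main contribution reads
$$\sum_{m=1}^{j-r+1}\sum_{|p_1,\dots,p_{r-1}|=j-m-r+1}\beta_{(1)}^{(m-1)}\xcirc\beta_{(1)}^{(p_1)}\xcirc\cdots\xcirc\beta_{(1)}^{(p_{r-1})},$$
which, under the reindexing $n_1=m-1$ and $n_i=p_{i-1}$ for $i\ge 2$, becomes exactly $\sum\beta_{(1)}^{(n_1)}\xcirc\cdots\xcirc\beta_{(1)}^{(n_r)}$ over all $(n_1,\dots,n_r)$ with $n_i\ge 0$ and $|n_1,\dots,n_r|=j-r$, as required. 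All remaining cross-terms (main$\times L'$, $L_m\times$main, $L_m\times L'$) inherit some $\beta_i$ with $i>1$ from a non-main factor, hence contribute to the new $L$ for $\gamma_j^{(r)}$.

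The main obstacle is the bookkeeping of the cross-terms: one must verify that each of them really contains some $\beta_i$ with $i>1$. The delicate case is $m=1$, where the ``main'' part of $\alpha_1$ is just $\alpha$ (no $\beta$'s at all); here the product $\alpha\xcirc L'$ still lies in the new $L$ only because $L'$ itself already carries such a $\beta_i$, and this is where the precise structural form imposed on $L$ in the statement is essential. One also needs to check that the map $(m,p_1,\dots,p_{r-1})\mapsto(n_1,\dots,n_r)$ is a bijection onto the set of $r$-tuples with the prescribed weight $j-r$, which is immediate from the ranges $0\le n_1\le j-r$ and $|p_1,\dots,p_{r-1}|=j-r-n_1$.
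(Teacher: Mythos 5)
Your proof is correct and rests on exactly the same key decomposition as the paper's, namely $\alpha_m=\beta_{(1)}^{(m-1)}+L_m$ with $L_m$ carrying some $\beta_i$, $i>1$; the paper simply substitutes this into $\gamma_j^{(r)}=\sum_{|n_1,\dots,n_r|=j}\alpha_{n_1}\circ\cdots\circ\alpha_{n_r}$ and expands all $r$ factors at once (using $\alpha_0=0$ to restrict to $n_i\ge 1$ and then shifting indices), whereas you unroll the same expansion by induction on $r$. The content is identical, and your extra care with the $m=1$ cross-terms is a correct reading of why $L$ must be allowed to contain $\alpha$'s as well as $\beta_i$'s.
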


\begin{proof} Since
$$
\alpha_j = \beta_1^{(j-1)} + \sum_{l=1}^{j-2}\sum_{|i_1,\dots,i_l|=j-1}\beta_{(i_1,\dots,i_l)}
\qquad\text{for all $j\ge 1$,}
$$
we have
\begin{align*}
\gamma_j^{(r)} & = \sum_{|n_1,\dots,n_r| = j} \alpha_{n_1\dots n_r}\\
& = L+\sum_{|n_1,\dots,n_r|=j}\beta_{(1)}^{(n_1-1)}\xcirc\cdots\xcirc\beta_{(1)}^{(n_r-1)},\\
& = L+\sum_{n_1,\dots n_r\ge 0\atop |n_1,\dots,n_r|=j-r}\beta_{(1)}^{(n_1)}\xcirc\cdots \xcirc
\beta_{(1)}^{(n_r)},
\end{align*}
as desired.
\end{proof}

Let $A$ be an algebra and $\varphi$, $\psi$ endomorphisms of $A$. Recall that a map $d\colon
A\to A$ is a $(\varphi,\psi)$-derivation if
$$
d(ab)=d(a)\psi(b)+ \varphi(a)d(b)\text{ for all $a,b\in A$.}
$$

\begin{lemma}\label{lema 2.6} For each $i\ge 1$, let $\beta_i\colon A\to A$ be an
$(\alpha,\alpha^{i+1})$-derivation. Assume that if $i+i'\ge 3$, then
$\alpha^r(\beta_i(a))\beta_{i'}(b)=0$ for all $r\in\bZ$ and $a,b\in A$. We have:

\begin{enumerate}

\smallskip

\item $\beta_{(i_1,\dots,i_l)}(a)L(b) = 0$, where $L$ is as in Lemma~\ref{lema 2.5}.

\smallskip

\item If some $i_u>1$ and some $n_v>0$, then
$$
\beta_{(i_1,\dots,i_l)}(a)\alpha^h\xcirc \beta_1(b) = \alpha^h\xcirc \beta_1(a)
\beta_{(i_1,\dots,i_l)}(b) = 0.
$$

\smallskip

\item If some $i_u>1$, then
$$
\beta_{(i_1,\dots,i_l)}\,\text{ is an $(\alpha,\alpha^j)$-derivation,}
$$
where $j = 1+i_1+\cdots+i_l$.

\end{enumerate}

\end{lemma}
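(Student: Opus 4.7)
The plan is to reduce all three parts of the lemma to a single annihilation claim. \textbf{Auxiliary claim.} Let $M$ be a composition of $\alpha$'s, $\alpha^{-1}$'s and $\beta_i$'s ($i\ge 1$) in which at least one $\beta_i$ with $i>1$ appears, and let $u\in A$ have the form $u=\alpha^t(\beta_{i_1}(x))$ with $t\in\bZ$, $i_1\ge 1$ and $x\in A$. Then $u\cdot M(b) = 0$ and $M(b)\cdot u = 0$ for every $b\in A$.

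I would prove the auxiliary claim by induction on the number $k$ of $\beta$-factors appearing in $M$. When $k=1$ the unique $\beta$-factor has index $j_0\ge 2$, so $M=\alpha^{a_0}\xcirc\beta_{j_0}\xcirc\alpha^{a_1}$ and, since $\alpha$ is an algebra automorphism,
$$
u\cdot M(b) = \alpha^{a_0}\bigl(\alpha^{-a_0}(u)\cdot\beta_{j_0}(\alpha^{a_1}(b))\bigr);
$$
now $\alpha^{-a_0}(u) = \alpha^{t-a_0}(\beta_{i_1}(x))$ and $i_1+j_0\ge 3$, so the standing hypothesis kills the inner product. For the inductive step write $M=\alpha^{a_0}\xcirc\beta_{j_0}\xcirc M'$ with $\beta_{j_0}$ the outermost $\beta$-factor; the case $j_0>1$ is handled exactly as above. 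If $j_0=1$, then $M'$ still contains a $\beta_i$ with $i>1$, and it suffices to show $u'\cdot\beta_1(M'(b))=0$ with $u'=\alpha^{-a_0}(u)$. Applying the $(\alpha,\alpha^2)$-derivation identity for $\beta_1$ to $y=\alpha^{-1}(u')$ and $z=M'(b)$ yields
$$
u'\cdot\beta_1(M'(b)) = \beta_1(y\cdot M'(b)) - \beta_1(y)\cdot\alpha^2(M'(b)).
$$
The first summand vanishes because $y=\alpha^{t-a_0-1}(\beta_{i_1}(x))$ retains the required shape while $M'$ has one fewer $\beta$ than $M$ and still contains a $\beta_i$ with $i>1$, so by the inductive hypothesis $y\cdot M'(b)=0$; the second vanishes because $\beta_1(y)=\alpha^0(\beta_1(\cdot))$ has the required shape with $i_1'=1$, and $\alpha^2\xcirc M'$ has the same $\beta$-count as $M'$ and still contains a $\beta_i$ with $i>1$. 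The symmetric statement $M(b)\cdot u = 0$ follows by the same argument with sides interchanged.

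Granted the auxiliary claim, statement~(1) is immediate: every summand of $L$ is a composition containing some $\beta_i$ with $i>1$, and $\beta_{(i_1,\dots,i_l)}(a)=\alpha^0(\beta_{i_1}(\text{stuff}))$ is of the required $u$-shape, so summing over summands gives $\beta_{(i_1,\dots,i_l)}(a)L(b)=0$. Statement~(2) is the dual application: if some $i_u>1$ then $\beta_{(i_1,\dots,i_l)}$ is an operator containing some $\beta_i$ with $i>1$, while $\alpha^h(\beta_1(b))$ (resp.\ $\alpha^h(\beta_1(a))$) has the $u$-shape with $i_1'=1$, and the two equalities follow from the two halves of the auxiliary claim.

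For statement~(3) I would induct on $l$, the base $l=1$ being the defining derivation property of each $\beta_i$. For the inductive step I split on whether some $i_u>1$ occurs with $u\ge 2$, in which case I decompose $\beta_{(i_1,\dots,i_l)} = \beta_{i_1}\xcirc\alpha^{-1}\xcirc\beta_{(i_2,\dots,i_l)}$ and invoke induction on $\beta_{(i_2,\dots,i_l)}$, or else $i_1>1$ while $i_2=\cdots=i_l=1$, in which case I use the dual decomposition $\beta_{(i_1,\dots,i_{l-1})}\xcirc\alpha^{-1}\xcirc\beta_{i_l}$ and apply induction to $\beta_{(i_1,\dots,i_{l-1})}$. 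In either case, expanding $\beta_{(i_1,\dots,i_l)}(ab)$ with the relevant derivation identities produces the two desired main terms of the $(\alpha,\alpha^j)$-derivation property together with several cross terms, each of which is annihilated by~(1), by~(2), or directly by the standing hypothesis. The main obstacle I anticipate is the bookkeeping inside the auxiliary claim: one has to verify carefully that after applying the $\beta_1$-derivation identity both resulting summands remain within the inductive regime—the left-hand element keeps the shape $\alpha^{\mathrm{int}}(\beta_{i_1}(\cdot))$ with $i_1\ge 1$, and the right-hand operator still contains a $\beta_i$ with $i>1$ while having strictly fewer $\beta$-factors.
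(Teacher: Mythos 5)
Your proposal is correct and follows essentially the same route as the paper: your auxiliary claim (proved by peeling off outer $\alpha^{\pm1}$'s by conjugation and outer $\beta_1$'s via the $(\alpha,\alpha^2)$-derivation identity, terminating at the first $\beta_j$ with $j\ge 2$) is exactly the mechanism of the paper's proof of items (1) and (2), merely organized as an induction on the number of $\beta$-factors instead of on the position of the first large index. Your two-case induction for item (3), with the cross terms killed by the annihilation statements, coincides with the paper's argument.
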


\begin{proof} (1) let $f_1\xcirc\cdots \xcirc f_v$, where $f_i = \{\alpha,\alpha^{-1},
\beta_1,\beta_2,\dots \}$ be a term of $L$. Let $i_0$ be the least $i$ such that $f_i =
\beta_j$ with $j>1$. By definition $\beta_{(i_1,\dots,i_l)}(a) = \beta_{i_1}(a')$, where $a' =
\alpha^{-1} (\beta_{(i_2,\dots,i_l)}(a))$. If $i_0 = 1$, then
$$
\beta_{i_1}(a')\bigl(f_1\xcirc \cdots \xcirc f_v(b)\bigr) = 0,
$$
by hypothesis. The general case follows by induction on $i_0$ using that if $f_1 = \alpha^{\pm
1}$, then
$$
\beta_{i_1}(a')f_1\xcirc\cdots\xcirc f_v(b) = f_1\Bigl(f_1^{-1}\xcirc\beta_{i_1}(a') f_2\xcirc
\cdots \xcirc f_v(b)\Bigr)
$$
and if $f_1 = \beta_1$, then
\begin{align*}
\beta_{i_1}(a') f_1\xcirc\cdots\xcirc f_v(b) & = \beta_1\Bigl(\alpha^{-1}\xcirc\beta_{i_1}(a')
f_2 \xcirc \cdots \xcirc f_v(b)\Bigr)\\
& + \alpha^2\Bigl(\alpha^{-2}\xcirc\beta_1\xcirc\alpha^{-1}\xcirc\beta_{i_1}(a')f_2\xcirc\cdots
\xcirc f_v(b)\Bigr).
\end{align*}

\smallskip

\noindent (2) It is similar to (1).

\smallskip

\noindent (3) We make the proof by induction on $l$. First assume that $u>1$. Then, by the
inductive hypothesis,
\begin{align*}
\beta_{(i_1,\dots,i_l)}(ab) & = \beta_{i_1}\xcirc\alpha^{-1}\xcirc\beta_{(i_2,\dots,i_l)}(ab)\\
& = \beta_{i_1}\xcirc  \alpha^{-1}\Bigl(\beta_{(i_2,\dots,i_l)}(a)\alpha^{1+i_2+\cdots+i_l}(b)
+\alpha(a)\beta_{(i_2,\dots,i_l)}(b) \Bigr)\\
& = \beta_{(i_1,\dots,i_l)}(a)\alpha^{1+i_1+\cdots+i_l}(b) + \beta_{(i_2,\dots,i_l)}(a)
\beta_{i_1} \xcirc \alpha^{i_2+\cdots+i_l}(b)\\
& +\beta_{i_1}(a)\alpha^{i_1}\xcirc \beta_{(i_2,\dots,i_l)}(b) + \alpha(a)
\beta_{(i_1,\dots,i_l)}(b)\\
& =\beta_{(i_1,\dots,i_l)}(a)\alpha^{1+i_1+\cdots+i_l}(b)+\alpha(a) \beta_{(i_1,\dots,i_l)}(b),
\end{align*}
where the last equality follows from the fact that, by item~(2),
$$
\beta_{(i_2,\dots,i_l)}(a)\beta_{i_1}\xcirc\alpha^{i_2+\cdots+i_l}(b) = \beta_{i_1}(a)
\alpha^{i_1}\xcirc \beta_{(i_2,\dots,i_l)}(b) = 0.
$$
Assume now that $u=1$. Then, arguing as above we obtain,
\begin{align*}
\beta_{(i_1,\dots,i_l)}(ab) &=\beta_{(i_1,\dots,i_{l-1})}\xcirc\alpha^{-1}\xcirc
\beta_{i_l}(ab)\\
& = \beta_{(i_1,\dots,i_l)}(a)\alpha^{1+i_1+\cdots+i_l}(b)+\alpha(a)\beta_{(i_1,\dots,i_l)}(b),
\end{align*}
as desired.
\end{proof}

\begin{theorem}\label{teorema 2.7} Let $\alpha\colon A \to A$ be an algebra automorphism. For
each $i\ge 1$, let $\beta_i\colon A\to A$ be an $(\alpha,\alpha^{i+1})$-derivation. If

\begin{enumerate}

\smallskip

\item $\alpha^r(\beta_i(a))\beta_{i'}(b)=0$ for all $r\in\bZ$ and $a,b\in A$ whenever $i+i'\ge
3$,

\smallskip

\item For all $a\in A$ there is $n\in \bN$ such that
$$
\sum_{l=1}^j\sum_{|i_1,\dots,i_l|=j}\beta_{(i_1,\dots,i_l)}(a) = 0\quad\text{for all $j>n$,}
$$

\smallskip

\end{enumerate}
then, the formula
$$
s(Y\ot a)=\sum_{j=0}^{\infty} \alpha_j(a)\ot Y^j,
$$
where the maps $\alpha_j \colon A\to A$ ($j\ge 0$) are constructed as in Lemma~\ref{lema 2.5},
defines a twisting map $s\colon k[Y]\ot A\to A\ot k[Y]$.
\end{theorem}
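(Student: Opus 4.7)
My plan is to verify conditions (1)--(3) of Theorem~\ref{teorema 2.1} for the family $(\alpha_j)_{j\ge 0}$ produced by Lemma~\ref{lema 2.5}. Local finiteness (condition~(1)) is immediate, because for $j\ge 2$ the formula $\alpha_j(a)=\sum_{l=1}^{j-1}\sum_{|i_1,\dots,i_l|=j-1}\beta_{(i_1,\dots,i_l)}(a)$ matches (with $j-1$ in place of $j$) the sum appearing in hypothesis~(2) of the theorem, which vanishes for large $j$. For condition~(2), apply the twisted Leibniz rule to $1=1\cdot 1$ to see that each $\beta_i(1)=0$; hence every $\beta_{(i_1,\dots,i_l)}(1)=0$, which together with $\alpha_0=0$ and $\alpha_1(1)=\alpha(1)=1$ gives $\alpha_j(1)=\delta_{1j}$.

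The main work is the Leibniz identity $\alpha_j(ab)=\sum_{r}\alpha_r(a)\gamma_j^{(r)}(b)$ of condition~(3). I would decompose each side into a \emph{pure} part (involving only $\alpha^{\pm 1}$ and $\beta_1$) and an \emph{impure} part (containing some $\beta_i$ with $i>1$). Write $\alpha_j=\beta_{(1)}^{(j-1)}+I_j$ where $I_j$ collects the impure summands of $\alpha_j$ (so $I_j=0$ for $j\le 2$); by Lemma~\ref{lema 2.5}, write $\gamma_j^{(r)}=L_j^{(r)}+P_j^{(r)}$ with $P_j^{(r)}=\sum_{n_1+\cdots+n_r=j-r}\beta_{(1)}^{(n_1)}\xcirc\cdots\xcirc\beta_{(1)}^{(n_r)}$. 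Apply Lemma~\ref{lema 2.6}: part~(3) shows $I_j$ is an $(\alpha,\alpha^j)$-derivation, so the impure contribution to the left-hand side is $\alpha(a)I_j(b)+I_j(a)\alpha^j(b)$. Part~(1) kills $\alpha_r(a)L_j^{(r)}(b)$ for $r\ge 2$, and the only surviving impure contribution from the right is $\alpha(a)L_j^{(1)}(b)=\alpha(a)I_j(b)$. Part~(2) kills $I_r(a)P_j^{(r)}(b)$ for $r<j$ (each summand of $P_j^{(r)}$ then contains a $\beta_1$), and for $r=j$ the only surviving term is $P_j^{(j)}=\alpha^j$, giving $I_j(a)\alpha^j(b)$. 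The impure contributions thus match exactly, and condition~(3) reduces to the \emph{pure} identity
\[
\beta_{(1)}^{(j-1)}(ab)=\sum_{r=1}^j \beta_{(1)}^{(r-1)}(a)\,P_j^{(r)}(b).
\]

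I would establish this pure identity by induction on $j$; the base case $j=1$ is $\alpha(ab)=\alpha(a)\alpha(b)$. For the inductive step, use $\beta_{(1)}^{(j)}=\beta_1\xcirc\alpha^{-1}\xcirc\beta_{(1)}^{(j-1)}$ to apply $\beta_1\xcirc\alpha^{-1}$ to the inductive hypothesis, then expand each summand $\beta_1(\alpha^{-1}\beta_{(1)}^{(r-1)}(a)\cdot\alpha^{-1}P_j^{(r)}(b))$ using the $(\alpha,\alpha^2)$-Leibniz rule for $\beta_1$. Using the identity $\beta_1\xcirc\alpha^{-1}\xcirc\beta_{(1)}^{(r-1)}=\beta_{(1)}^{(r)}$, one matches the result with the desired sum after verifying the combinatorial recursion $P_{j+1}^{(r)}=\alpha\xcirc P_j^{(r-1)}+\beta_1\xcirc\alpha^{-1}\xcirc P_j^{(r)}$, obtained by splitting the defining sum of $P_{j+1}^{(r)}$ according to whether $n_1=0$ or $n_1\ge 1$.

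The main obstacle is the bookkeeping in the cancellation step, especially applying Lemma~\ref{lema 2.6}(2) to general compositions $\beta_{(1)}^{(n_1)}\xcirc\cdots\xcirc\beta_{(1)}^{(n_r)}$: one must verify that each such composition, when some $n_v>0$ and when applied to $b$, can be put into the form $\alpha^h\xcirc\beta_1(b')$ by shuffling the outer $\alpha^{\pm 1}$'s, so that the annihilation $\beta_{(i_1,\dots,i_l)}(a)\cdot\alpha^h\xcirc\beta_1(b')=0$ can be invoked.
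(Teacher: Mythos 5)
Your proposal is correct and follows essentially the same route as the paper: verify conditions (1)--(3) of Theorem~\ref{teorema 2.1}, split $\alpha_j$ and $\gamma_j^{(r)}$ into pure ($\beta_{(1)}^{(\cdot)}$) and impure parts, use Lemma~\ref{lema 2.6} to cancel the impure contributions, and prove the remaining pure identity by induction on $j$ via the $(\alpha,\alpha^2)$-Leibniz rule for $\beta_1$. The recursion $P_{j+1}^{(r)}=\alpha\xcirc P_j^{(r-1)}+\beta_1\xcirc\alpha^{-1}\xcirc P_j^{(r)}$ you isolate is exactly the reindexing the paper performs inline in its final chain of equalities, and the "bookkeeping obstacle" you flag is precisely what the paper delegates to parts (1) and (2) of Lemma~\ref{lema 2.6}.
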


\begin{proof} By item~(2), the maps $\alpha_j$ satisfy condition~(1) of Theorem~\ref{teorema
2.1}. Condition~(2) follows from the fact that the $\beta_i$'s are derivations. It remains to
check that condition~(3) also holds. For $j\le 1$ this is immediate. Assume $j\ge 2$ and set
$$
T  = \sum_{r=0}^{\infty}\alpha_r(a)\gamma_j^{(r)}(b).
$$
On one hand, by Lemma~\ref{lema 2.5} and~\ref{lema 2.6}, \allowdisplaybreaks
\begin{align*}
T & = \sum_{r=1}^j \alpha_r(a)\gamma_j^{(r)}(b)\\
& = \alpha_1(a)\gamma_j^{(1)}(b) + \alpha_j(a)\gamma_j^{(j)}(b) + \sum_{r=2}^{j-1}
\alpha_r(a)\gamma_j^{(r)}(b)\\
& =\sum_{l=1}^{j-1} \sum_{|i_1,\dots, i_l|=j-1} \alpha(a)\beta_{(i_1,\dots,i_l)}(b) +
\sum_{l=1}^{j-1} \sum_{|i_1,\dots, i_l|=j-1} \beta_{(i_1,\dots,i_l)}(a)\alpha^j(b)\\
&+ \sum_{r=2}^{j-1}\left(\sum_{l=1}^{r-1}\sum_{|i_1,\dots,i_l|=r-1}\beta_{(i_1,\dots,i_l)}(a)
\right)\left(L(b) +\sum_{|n_1,\dots, n_r|=j-r}\beta_{(1)}^{(n_1)}\xcirc\cdots\xcirc
\beta_{(1)}^{(n_r)}(b)\right)\\
& =\sum_{l=1}^{j-1} \sum_{|i_1,\dots, i_l|=j-1} \alpha(a)\beta_{(i_1,\dots,i_l)}(b) +
\sum_{l=1}^{j-1} \sum_{|i_1,\dots,i_l|=j-1} \beta_{(i_1,\dots,i_l)}(a)\alpha^j(b)\\
&+ \sum_{r=2}^{j-1} \sum_{|n_1,\dots,n_r|=j-r} \beta_{(1)}^{(r-1)}(a) \beta_{(1)}^{(n_1)}\xcirc
\cdots\xcirc\beta_{(1)}^{(n_r)}(b).
\end{align*}
On the other hand, since by item~(3) of Lemma~\ref{lema 2.6}, $\beta_{(i_1,\dots,i_l)}$ is an
$(\alpha,\alpha^j)$-derivation whenever some $i_u>1$, we have
\begin{align*}
\alpha_j(ab) & = \sum_{l=1}^{j-1}\sum_{|i_1,\dots,i_l|=j-1}\beta_{(i_1,\dots,i_l)}(ab)\\
& = \sum_{l=1}^{j-2}\sum_{|i_1,\dots,i_l|=j-1} \Bigl(\alpha(a) \beta_{(i_1,\dots,i_l)}(b)
+\beta_{(i_1,\dots,i_l)}(a)\alpha^j(b)\Bigr)+ \beta_{(1)}^{(j-1)}(ab).
\end{align*}
So, in order to finish the proof it suffices to show that for all $j\ge 2$,
\begin{align*}
\beta_{(1)}^{(j-1)}(ab) & =\alpha(a)\beta_{(1)}^{(j-1)}(b)+\beta_{(1)}^{(j-1)}(a) \alpha^j(b)\\
& + \sum_{r=2}^{j-1}\sum_{|n_1,\dots,n_r| =j-r} \beta_{(1)}^{(r-1)}(a)\beta_{(1)}^{(n_1)}
\xcirc \cdots\xcirc\beta_{(1)}^{(n_r)}(b).
\end{align*}
We proceed by induction on $j$. When $j = 2$,
$$
\beta_{(1)}^{(1)}(ab) = \beta_1(ab) = \alpha(a)\beta_1(b) + \beta_1(a)\alpha^2(b),
$$
since $\beta_1$ is an $(\alpha,\alpha^2)$-derivation. Assume that the result is valid for $j$.
Then,
\begin{align*}
\beta_{(1)}^{(j)}(ab) & = \beta_1\xcirc\alpha^{-1}\xcirc\beta_{(1)}^{(j-1)}(ab)\\
& = \beta_1\left(a\alpha^{-1}\xcirc\beta_{(1)}^{(j-1)}(b)\right) + \beta_1\left(\alpha^{-1}
\xcirc \beta_{(1)}^{(j-1)}(a)\alpha^{j-1}(b)\right)\\
&+\beta_1\left(\sum_{r=2}^{j-1}\sum_{|n_1,\dots,n_r| =j-r}\alpha^{-1}\xcirc
\beta_{(1)}^{(r-1)}(a)\alpha^{-1}\xcirc\beta_{(1)}^{(n_1)}\xcirc\cdots\xcirc
\beta_{(1)}^{(n_r)}(b) \right)\\
& = \alpha(a)\beta_{(1)}^{(j)}(b) + \beta_{(1)}^{(j)}(a) \alpha^{j+1}(b)\\
& + \beta_1(a)\alpha\xcirc \beta_{(1)}^{(j-1)}(b)+\beta_{(1)}^{(j-1)}(a) \beta_1\xcirc
\alpha^{j-1}(b)\\
& + \sum_{r=2}^{j-1}\sum_{|n_1,\dots,n_r| =j-r}\beta_{(1)}^{(r)}(a)\alpha\xcirc
\beta_{(1)}^{(n_1)} \xcirc\cdots\xcirc\beta_{(1)}^{(n_r)}(b)\\
& + \sum_{r=2}^{j-1}\sum_{|n_1,\dots,n_r| =j-r}\beta_{(1)}^{(r-1)}(a)\beta_{(1)}^{(n_1+1)}
\xcirc \cdots\xcirc\beta_{(1)}^{(n_r)}(b)\\
& = \alpha(a)\beta_{(1)}^{(j)}(b) + \beta_{(1)}^{(j)}(a)\alpha^{j+1}(b)\\
& + \sum_{r=2}^j \sum_{|n_1,\dots,n_r| =j+1-r} \beta_{(1)}^{(r-1)}(a) \beta_{(1)}^{(n_1)}\xcirc
\cdots\xcirc\beta_{(1)}^{(n_r)}(b).
\end{align*}
This finish the proof.
\end{proof}

\begin{corollary}\label{corolario 2.8} Let $A$ be an algebra, $\alpha\colon A \to A$ be an
algebra automorphism and $\beta\colon A\to A$ be an $(\alpha,\alpha^2)$-derivation. Let
$(\alpha_j\colon A\to A)_{j\ge 0}$ be the family of maps defined by $\alpha_0=0$ and $\alpha_j
= \bigl(\beta\xcirc\alpha^{-1}\bigr)^{j-2}\xcirc \beta$ for $j\ge 1$. If $\beta\xcirc
\alpha^{-1}$ is locally nilpotent (that is, for each $a\in A$ there exists $n\ge 1$ such that
$\bigl(\beta\xcirc\alpha^{-1}\bigr)^n(a)=0$), then the formula
$$
s(Y\ot a) = \sum_{j=0}^{\infty} \alpha_j(a)\ot Y^j
$$
defines a twisting map $s\colon k[Y]\ot A\to A\ot k[Y]$.
\end{corollary}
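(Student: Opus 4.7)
The plan is to recognize Corollary~\ref{corolario 2.8} as the special case of Theorem~\ref{teorema 2.7} obtained by choosing $\beta_1 = \beta$ and $\beta_i = 0$ for every $i \ge 2$. Once this choice is made, it only remains to verify hypotheses~(1) and~(2) of Theorem~\ref{teorema 2.7} and to check that the family $(\alpha_j)_{j\ge 0}$ produced by the recipe in Lemma~\ref{lema 2.5} coincides with the one described in the corollary.

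First, condition~(1) is immediate: if $i + i' \ge 3$ then at least one of $i, i'$ exceeds $1$, so at least one of $\beta_i$ or $\beta_{i'}$ is the zero map and consequently $\alpha^r(\beta_i(a))\beta_{i'}(b) = 0$ for every $r \in \bZ$ and every $a,b \in A$. Next, because $\beta_i = 0$ for $i \ge 2$, the only tuple $(i_1,\dots,i_l)$ with $|i_1,\dots,i_l| = j$ that gives a possibly nonzero $\beta_{(i_1,\dots,i_l)}$ is the tuple of all ones, which forces $l = j$. Thus the inner sum of condition~(2) collapses to
$$
\beta_{(1)}^{(j)}(a) = \bigl(\beta\xcirc\alpha^{-1}\bigr)^{j-1}\xcirc\beta(a).
$$
Local nilpotency of $\beta\xcirc\alpha^{-1}$ applied to the element $\beta(a)$ then furnishes an $n \in \bN$ with $\bigl(\beta\xcirc\alpha^{-1}\bigr)^{j-1}(\beta(a)) = 0$ for all $j > n$, which is exactly hypothesis~(2).

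The same collapse of the sum determines the maps $\alpha_j$ built in Lemma~\ref{lema 2.5}: for $j \ge 2$ only the tuple of $j-1$ ones contributes, so
$$
\alpha_j \;=\; \beta_{(1)}^{(j-1)} \;=\; \bigl(\beta\xcirc\alpha^{-1}\bigr)^{j-2}\xcirc\beta,
$$
which matches the formula in the statement of the corollary (and also reproduces $\alpha_1 = \alpha$ and $\alpha_0 = 0$). Applying Theorem~\ref{teorema 2.7} to this data yields the required twisting map.

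There is no real obstacle here; the whole content is bookkeeping. The only step that deserves a moment's care is identifying which composite the inner sums in Lemma~\ref{lema 2.5} and in hypothesis~(2) of Theorem~\ref{teorema 2.7} reduce to once $\beta_i$ vanishes for $i \ge 2$, and then recognizing the local nilpotency hypothesis as exactly what is needed to truncate that composite.
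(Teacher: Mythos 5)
Your proposal is correct and is exactly the paper's argument: the paper proves the corollary by taking $\beta_1=\beta$ and $\beta_j=0$ for $j>1$ in Theorem~\ref{teorema 2.7}, and you have simply spelled out the routine verifications of hypotheses~(1) and~(2) and the collapse of the sums in Lemma~\ref{lema 2.5}, all of which check out.
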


\begin{proof} Take $\beta_1 = \beta$ and $\beta_j = 0$ for $j>1$ in Theorem~\ref{teorema 2.7}.
\end{proof}

\begin{example}\label{ejemplo 2.9} Let $A=k[t]/\langle t^n\rangle$. Let $D\colon A\to A$ be
the derivation defined by $D(P)(t)=P'(t)t^2$. The formula
$$
s(Y\ot P)= P\ot Y + \sum_{j=1}^{\infty}  D^j(P)\ot Y^{j+1}
$$
defines a twisting map $s\colon k[Y]\ot A\to A\ot k[Y]$.
\end{example}

\section{Twisted planes}
\setcounter{equation}{0}
The aim of this section is to study in detail the twisting maps
$$
s\colon k[Y]\ot k[X]\to k[X]\ot k[Y].
$$

\begin{theorem}\label{teorema 3.1} Let $\sum_{ij} q_{ij} X^i\ot Y^j\in k[X]\ot k[Y]$. If
$q_{ij} = 0$ whenever $i\le 1$ or $j\le 1$, then there is a unique twisting map $s\colon
k[Y]\ot k[X]\to k[X]\ot k[Y]$ such that $s(Y\ot X) = \sum_{ij} q_{ij} X^i\ot Y^j$. Moreover
$s(Y^r\ot X^s) = 0$ whenever $r,s>0$ and $r+s>2$.
\end{theorem}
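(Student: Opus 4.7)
The plan is to apply Theorem~\ref{teorema 2.1} to rephrase the problem in terms of a family $(\alpha_j\colon k[X]\to k[X])_{j\ge 0}$ with $\alpha_j(X)=Q_j(X):=\sum_i q_{ij}X^i$, subject to conditions~(1)--(3) of that theorem. The hypothesis on the $q_{ij}$ forces $\alpha_0(X)=\alpha_1(X)=0$ and $Q_j(X)\in (X^2)$ for $j\ge 2$. By Remark~\ref{remark 2.2}, $\ker\alpha_0$ is a subalgebra of $k[X]$ containing $1$ and $X$, hence equals $k[X]$, so $\alpha_0\equiv 0$; then $\alpha_1$ is an algebra endomorphism with $\alpha_1(1)=1$ and $\alpha_1(X)=0$, i.e.\ $\alpha_1=\ev_0$. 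The remainder of the argument splits into existence and uniqueness.

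For existence, I would define $\tilde\alpha_0=0$, $\tilde\alpha_1=\ev_0$, and, for $j\ge 2$, the $k$-linear map $\tilde\alpha_j$ on $k[X]$ determined by $\tilde\alpha_j(1)=0$, $\tilde\alpha_j(X)=Q_j(X)$ and $\tilde\alpha_j(X^s)=0$ for $s\ge 2$. Conditions~(1) and~(2) of Theorem~\ref{teorema 2.1} are immediate. The main point for condition~(3) is that each $\tilde\alpha_n$ annihilates the ideal $(X^2)$: if $P=\sum_{i\ge 2}c_iX^i$, then $\tilde\alpha_n(P)=\sum c_i\tilde\alpha_n(X^i)=0$. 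A routine induction on $r$ then gives $\tilde\gamma_j^{(r)}(1)=\delta_{j,r}$, $\tilde\gamma_j^{(r)}(X)=0$ for $r\ge 2$, and $\tilde\gamma_j^{(r)}(X^s)=0$ for $r\ge 1$, $s\ge 2$. Condition~(3) then reduces to a direct check of $\tilde\alpha_j(X^s\cdot X^t)$ split according to whether $s,t$ are $0$, $1$ or $\ge 2$. The resulting twisting map $\tilde s$ satisfies $\tilde s(Y^r\ot X^s)=0$ for $r,s>0$, $r+s>2$, by formula~\eqref{eq1}.

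For uniqueness, let $s$ be an arbitrary twisting map satisfying the hypothesis; by Theorem~\ref{teorema 2.1} it suffices to show that $\alpha_j(X^s)=0$ for every $j\ge 0$ and every $s\ge 2$. Applying~(3) with $a=X$, $b=X^{s-1}$ and using $\alpha_0(X)=\alpha_1(X)=0$ yields
$$
\alpha_j(X^s) \;=\; \sum_{r\ge 2} Q_r(X)\,\gamma_j^{(r)}(X^{s-1}) \in (X^2),
$$
since $Q_r\in (X^2)$. I would then iterate this bound. Suppose that $\alpha_n(X^t)\in (X^N)$ for all $n\ge 0$ and all $t\ge 2$, with $N\ge 2$. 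Since any $P\in (X^N)$ has the form $\sum_{i\ge N}c_iX^i$ with $i\ge N\ge 2$, we obtain $\alpha_n(P)=\sum c_i\alpha_n(X^i)\in (X^N)$; that is, $\alpha_n$ preserves $(X^N)$. A short induction on $r$, using this closure property together with the base observation that $\alpha_{n_r}(X^{s-1})$ lies in $(X^2)$ (if $s=2$) or in $(X^N)$ (if $s\ge 3$), then shows $\gamma_j^{(r)}(X^{s-1})\in (X^N)$ for $r\ge 2$, $s\ge 2$. Multiplying by $Q_r\in (X^2)$ upgrades the bound to $\alpha_j(X^s)\in (X^{N+2})$ for $s\ge 2$. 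Iterating from $N=2$ gives $\alpha_j(X^s)\in\bigcap_{k\ge 1}(X^{2k})=\{0\}$ in $k[X]$, as required. The ``moreover'' statement now follows from~\eqref{eq1}: for $s\ge 2$ the inner factor $\alpha_{n_r}(X^s)$ already vanishes, while for $r\ge 2$ and $s=1$ the first composition $\alpha_{n_{r-1}}(Q_{n_r}(X))$ vanishes because $\alpha_n(X^i)=0$ for $i\ge 2$.

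The main obstacle I anticipate is the iterative $(X^N)$-refinement: it hinges on the closure property $\alpha_n\bigl((X^N)\bigr)\subseteq (X^N)$ (valid precisely because $N\ge 2$) and requires carefully tracking how this bound propagates through the nested compositions defining $\gamma_j^{(r)}$. The argument closes thanks to the elementary fact that a polynomial lying in $(X^n)$ for every $n$ must be zero in $k[X]$.
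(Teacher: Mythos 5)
Your proof is correct, and its skeleton (reduction to the family $(\alpha_j)$ via Theorem~\ref{teorema 2.1}, the deduction $\alpha_0=0$ and $\alpha_1=\ev_0$ from Remark~\ref{remark 2.2}, and the explicit construction of the $\alpha_j$'s for existence, which is exactly the paper's) agrees with the paper. The genuinely different ingredient is your uniqueness argument, i.e.\ the proof that $\alpha_j(X^n)=0$ for all $j$ and all $n\ge 2$. The paper proceeds by induction on $j$: assuming $\alpha_l(X^n)=0$ for all $l<j$ and $n\ge 2$, it writes $\alpha_j(X^2)=\sum_{r\ge 2}\alpha_r(X)\gamma_j^{(r)}(X)$ and kills each composition $\alpha_{n_1\dots n_r}(X)$ by inspecting the last two indices: if $n_r\le 1$ the innermost factor already vanishes, while if $n_r\ge 2$ then $\alpha_{n_r}(X)\in X^2k[X]$ and $n_{r-1}\le j-2<j$, so the next factor annihilates it by the inductive hypothesis; the case $n\ge 3$ then follows from the decomposition $X^n=X^2\cdot X^{n-2}$. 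You instead induct on the $X$-adic order of vanishing, bootstrapping from ``$\alpha_n(X^t)\in X^Nk[X]$ for all $n$ and all $t\ge 2$'' to the same statement with $N+2$, using the closure property $\alpha_n\bigl(X^Nk[X]\bigr)\subseteq X^Nk[X]$ (valid because $N\ge 2$) and the factor $Q_r\in X^2k[X]$, and you conclude from $\bigcap_N X^Nk[X]=0$. Both arguments are sound. The paper's is shorter once one spots the trick with the indices $n_{r-1},n_r$; yours avoids that bookkeeping, is uniform in $j$, and is in the same spirit as the valuation-tracking the paper later uses in Lemma~\ref{lema 5.5} for the power-series case, so it would adapt with little change to the completed setting of Section~5.
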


\begin{proof} First we assume that a such twisting map exists and we prove that it satisfies
$s(Y^r\ot X^s) = 0$ if $r,s>0$ and $r+s>2$. Let $(\alpha_j)_{j\ge 0}$ be as in
Theorem~\ref{teorema 2.1}. So, $\alpha_j(X) = \sum_i q_{ij}X^i$. The hypothesis means that
$\alpha_0(X)=\alpha_1(X)=0$ and $\alpha_j(X)\in X^2k[X]$ for each $j\ge 2$. Consequently, by
Remark~\ref{remark 2.2}, $\alpha_0 =0$ and $\alpha_1$ is the evaluation at $0$. We assert that
$\alpha_j(X^n)=0$ for each $j\ge 0$ and $n\ge 2$. For $j=0,1$ this is clear. Assume
$\alpha_l(X^n)=0$ for all $l<j$ and $n\ge 2$. Then, by item~(3) of Theorem~\ref{teorema 2.1},
we have
$$
\alpha_j(X^{2}) = \sum_{r=2}^j\alpha_r(X)\gamma_j^{(r)}(X),
$$
which vanishes, because clearly $\alpha_{n_1\dots n_r}(X)=0$ if $n_r\le 1$, and also if $n_r\ge
2$ since, in this case, $\alpha_{n_r}(X)\in X^2k[X]$ and $n_{r-1}<j$. Assuming now that $n\ge
3$ and $\alpha_j(X^n) =0$, using again item~(3) of Theorem~\ref{teorema 2.1}, we obtain
$$
\alpha_j(X^n) = \sum_{r=2}^j\alpha_r(X^2)\gamma_j^{(r)}(X^{n-2}) = 0.
$$
It is now easy to check that $s(Y^r\ot X^s) = 0$ whenever $r,s>0$ and $r+s>2$, as wanted.
Finally, to check the existence of $s$, it suffices to note that the family of maps
$(\alpha_j\colon A\to A)_{j\ge 0}$, defined by $\alpha_0 = 0$, $\alpha_1(X^n) = \delta_{1n}$
and
$$
\alpha_j(X^n) = \begin{cases} \sum_{i\ge 0} q_{ij}X^i &\text{if $n=1$,}\\ 0
&\text{otherwise,}\end{cases}
$$
for $j\ge 2$ satisfies the conditions required in Theorem~\ref{teorema 2.1}. We leave the
details to the reader.
\end{proof}

\begin{definition}\label{definicion 3.2} A twisting map $s\colon k[Y]\ot k[X]\to k[X]\ot
k[Y]$ is {\em upper bounded} if there exists $n_0\in \mathbb{N}$ such that $\alpha_n=0$ for all
$n\ge n_0$. It is {\em lower bounded} if $\tau\xcirc s\xcirc\tau$ is upper bounded, where
$\tau$ denotes the flip. Finally, we say that $s$ is {\em bounded} if it is upper and lower
bounded.
\end{definition}

\begin{example}\label{example 3.3} Twisting maps associated with Ore extensions
$k[X][Y,\alpha,\delta]$ are upper bound\-ed, but in general they are not lower bounded. The
twisting maps introduced in Theorem~\ref{teorema 3.1} are bounded.
\end{example}

By the sake of continuity the proof of the following result is relegated to an appendix.

\begin{theorem}\label{teorema 3.4} Assume that $k$ is a commutative domain. Let
$$
\sum_{ij} q_{ij} X^i\ot Y^j\in k[X]\ot k[Y].
$$
The following facts hold:

\begin{enumerate}

\smallskip

\item If $q_{i0}=q_{i1}=0$ for all $i\ge 0$ and there is a (necessarily unique) upper bounded
twisting map $s\colon k[Y]\ot k[X] \to k[X]\ot k[Y]$ such that
$$
s(Y \ot X)= \sum_{ij} q_{ij} X^i\ot Y^j,
$$
then $q_{0j}=q_{1j}= 0$ for all $j\ge 0$.

\smallskip

\item If $q_{0j}=q_{1j}= 0$ for all $j\ge 0$ and there is a (necessarily unique) lower bounded
twisting map $s\colon k[Y]\ot k[X] \to k[X]\ot k[Y]$ such that
$$
s(Y \ot X)= \sum_{ij} q_{ij} X^i\ot Y^j,
$$
then $q_{i0}=q_{i1}=0$ for all $i\ge 0$.

\end{enumerate}

\end{theorem}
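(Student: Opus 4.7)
By applying part~(1) to $\tau\circ s\circ\tau$, part~(2) follows—the flip exchanges upper with lower boundedness and swaps the roles of $X$ and $Y$. So I concentrate on part~(1).

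From $q_{i0}=0$ I obtain $\alpha_0=0$, and from $q_{i1}=0$ I obtain $\alpha_1(X)=0$; by Remark~\ref{remark 2.2}, $\alpha_1$ is then an algebra endomorphism of $k[X]$, and combined with $\alpha_1(1)=1$ it must be the augmentation $\epsilon\colon f\mapsto f(0)$. Let $N$ be such that $\alpha_n=0$ for $n>N$, and write $p_j(X):=\alpha_j(X)=\sum_i q_{ij}X^i$, $a_j:=q_{0j}$, $b_j:=q_{1j}$. The task is to show $a_j=b_j=0$ for every $j$.

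A key preliminary observation is a combinatorial reduction of the composites $\alpha_{n_1\cdots n_r}(X)$: since $\alpha_1=\epsilon$ sends polynomials to constants and $\alpha_n(c)=c\,\delta_{n,1}$ for $c\in k$, such a composite vanishes unless $(n_1,\dots,n_r)$ consists of a (possibly empty) prefix of $1$'s followed entirely by entries $\geq 2$; in that case it equals either the polynomial produced by composing the non-prefix $\alpha_{n_i}$'s on $X$ (empty prefix), or the constant term of that polynomial (nonempty prefix). In particular, $\gamma_{r+1}^{(r)}(X)=a_2$ for $r\geq 2$, since the only nonvanishing tuple is $(1,\dots,1,2)$.

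The plan is to exploit the equations $\alpha_j(X^m)=0$ for all $j>N$ and $m\geq 2$, which by~\eqref{eq0} take the form $\sum_{r=2}^N p_r(X)\,\gamma_j^{(r)}(X^{m-1})=0$. In particular, at $j=N+1$ and $m=2$ the top term (from $r=N$) is $a_2\,p_N(X)$ and the remaining terms involve lower $r$. Since $k[X]$ is a domain (as $k$ is) and since, replacing $N$ by the largest index for which $p_j\neq 0$ if necessary, we may assume $p_N\neq 0$, a carefully ordered induction—passing from the leading equation $\alpha_{N+1}(X^2)=0$ through $\alpha_{N+k}(X^m)=0$ for increasing $k$ and $m$—should peel off successively $a_2=0$, then $b_2=0$, then $a_3,b_3=0$, and so on up to $a_N,b_N=0$. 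The qualitative reason this must work is that a nonzero $a_j$ or $b_j$ ``leaks'' through a composition involving $\alpha_1$ to produce a nonzero term in some $\alpha_M(X^{m'})$ with $M>N$, contradicting upper boundedness; one checks this concretely, for instance, by observing that $\alpha_2(X)=1$ would force $\gamma_4^{(2)}(X^2)\neq 0$ and hence $\alpha_4(X^3)\neq 0$.

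The main obstacle is the combinatorial proliferation of cross-terms among the various $\gamma_j^{(r)}(X^{m-1})$, together with their nonlinear dependence on the unknowns $a_i$, $b_i$, and higher coefficients of the $p_i$. Disentangling the schedule of vanishings so that each coefficient is isolated in turn, while keeping track of all the cross-terms that appear, requires substantial bookkeeping and is what relegates the argument to the appendix.
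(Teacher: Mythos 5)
Your reduction of part~(2) to part~(1) via the flip, and your opening moves for part~(1) ($\alpha_0=0$, $\alpha_1=\ev_0$, the observation that $\alpha_{n_1\dots n_r}(X)$ survives only for tuples consisting of a prefix of $1$'s followed by entries $\ge 2$, and the consequence $\gamma_{r+1}^{(r)}(X)=q_{02}$) are all correct and coincide with how the paper's appendix begins. But from that point on the proposal is a plan, not a proof: the sentence ``a carefully ordered induction \dots should peel off successively $a_2=0$, then $b_2=0$, \dots'' followed by the admission that ``disentangling the schedule of vanishings \dots requires substantial bookkeeping'' defers precisely the content of the theorem. The difficulty is not mere bookkeeping. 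The single relation $\alpha_{N+1}(X^2)=0$ reads $q_{02}\,p_N(X)+\sum_{r=2}^{N-1}p_r(X)\gamma_{N+1}^{(r)}(X)=0$, and the lower-order $\gamma$'s are nonlinear in the unknown coefficients; nothing in the proposal shows that the system of all such relations is triangular in the order $a_2,b_2,a_3,b_3,\dots$ you propose, and in fact the paper's argument does not proceed coefficient by coefficient in that order at all. There is also a small slip: after replacing $N$ by $\max\{j:p_j\ne 0\}$ you can no longer assert $\alpha_n=0$ for $n>N$, so the equation $\alpha_{N+1}(X^2)=0$ with $p_N\ne 0$ is not automatically available.

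The mechanism that actually closes the argument, and which is absent from your sketch, is the multiplicative recursion of Lemma~\ref{diagonales}: under suitable vanishing hypotheses one proves
$\alpha_{\rho_0+ia+1}(X^{j_0+iu})=\alpha_{\rho_0+1}(X^{j_0})\,\alpha_{1,a+1}(X^u)^i$ for all $i$; upper boundedness kills the left-hand side for $i$ large, and since $k$ is a domain and $\alpha_{\rho_0+1}(X^{j_0})\ne 0$ this forces $\alpha_1\xcirc\alpha_{a+1}=0$, i.e.\ the vanishing of constant terms. Making the hypotheses of that lemma available requires identifying which composites $\alpha_{n_1\dots n_l}$ can contribute to $\gamma^{(l)}_{l+k_1}$ — this is the purely numerical Lemma~\ref{numerico} about sequences $n_i\ge\nu$ with $\sum n_i\le(l-1)b+\nu$ — and then running a double induction organized by the thresholds $r_j(b)=(\nu-1)(b^j-1)/(b-1)$ (Lemmas~\ref{curvas} and~\ref{nmayorquedos}), with a separate and genuinely harder analysis when $\nu=2$ (Lemma~\ref{nigualados}). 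Obtaining $q_{1k}=0$ (not just $q_{0k}=0$) comes out of this induction only at the very end, via the identity $\gamma_{r_2+1}^{(r_1+1)}(X)=\alpha_\nu(X)q_{1,d+1}^{r_1}$ and $\deg\alpha_\nu(X)>0$. None of these ingredients is visible in the proposal, so I cannot credit it with a proof of the theorem.
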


We say that a twisting map $s\colon k[Y]\ot k[X]\to k[X]\ot k[Y]$ is {\em almost null} if it is
equivalent (in the sense introduced above Proposition~\ref{proposicion 1.1}) to one of the
twisting maps considered in Theorem~\ref{teorema 3.1}.

\begin{corollary}\label{corolario 3.5} Assume that $k$ is a commutative domain. Let
$$
\sum_{ij} q'_{ij} X^i\ot Y^j\in k[X]\ot k[Y].
$$
Consider the polynomials
$$
P_i(Z)= \sum_{n\ge 0} q'_{in}Z^n \quad\text{and}\quad Q_j(Z)= \sum_{m\ge 0} q'_{mj}Z^m\qquad
\text{($i,j\ge 0$).}
$$
The following facts hold:

\begin{enumerate}

\smallskip

\item There is an almost null twisting map $s'\colon k[X]\ot k[Y]\to k[Y]\ot k[X]$ such that
$$
s'(Y\ot X)= \sum_{ij} q'_{ij} X^i\ot Y^j,
$$
if and only if there exist $\lambda,\xi\in k$ satisfying:
\begin{enumerate}

\smallskip

\item[(a)] $P_0(\xi)=0$, $P_1(\xi)=\xi$, $P'_0(\xi)=\lambda$, $P'_1(\xi)=0$ and $\xi$ is
multiple root of $P_i(Z)$ for each $i>1$.

\smallskip

\end{enumerate}
and
\begin{enumerate}

\smallskip

\item[(b)] $Q_0(\lambda)=0$, $Q_1(\lambda)=\lambda$, $Q'_0(\lambda)=\xi$, $Q'_1(\lambda)=0$
and $\lambda$ is multiple root of $Q_j(Z)$ for each $j>1$.

\smallskip

\end{enumerate}
Moreover, the equivalence $s'\simeq s$ is realized by means of the automorphisms
$$
f\colon k[Y]\to k[Y]\quad\text{and}\quad g\colon k[X]\to k[X],
$$
defined by $f(Y) = Y-\xi$ and $g(X) = X-\lambda$.

\smallskip

\item If there exist $\lambda,\xi\in k$ that satisfy item~(a), but not item~(b), then there is
not an upper bounded twisting map $s'\colon k[Y]\ot k[X]\to k[X]\ot k[Y]$ such that
$$
s'(Y\ot X)= \sum_{ij} q'_{ij} X^i\ot Y^j.
$$

\smallskip

\item If there exist $\lambda,\xi\in k$ that satisfy item~(b), but not item~(a), then there is
not a lower bounded twisting map $s'\colon k[Y]\ot k[X]\to k[X]\ot k[Y]$ such that
$$
s'(Y\ot X)= \sum_{ij} q'_{ij} X^i\ot Y^j.
$$

\end{enumerate}

\end{corollary}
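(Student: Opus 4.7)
The plan is to reduce Corollary~\ref{corolario 3.5} to Theorems~\ref{teorema 3.1} and~\ref{teorema 3.4} through a change of variables. Setting $R(X,Y) := \sum_{ij} q'_{ij} X^i Y^j$ so that $s'(Y\otimes X) = R(X,Y)$, I would consider the polynomial
$$
S(X,Y) := R(X+\lambda, Y+\xi) - \xi X - \lambda Y - \lambda\xi,
$$
which is exactly what describes the twisting map obtained from $s'$ by conjugation with the translations $g(X)=X-\lambda$ and $f(Y)=Y-\xi$. The calculation uses only $k$-linearity of $s'$ and the identities $s'(Y\otimes 1)=1\otimes Y$, $s'(1\otimes X)=X\otimes 1$. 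The core observation is that the four conditions $S(X,0)=\partial_Y S(X,0)=S(0,Y)=\partial_X S(0,Y)=0$ that characterize the Theorem~\ref{teorema 3.1} form of $S$ correspond, after expansion in powers of $(X+\lambda)$ and $(Y+\xi)$, exactly to (a) and (b): the two conditions at $Y=0$ yield constraints on $P_i(\xi)$ and $P'_i(\xi)$ (that is, (a)), while the two conditions at $X=0$ yield constraints on $Q_j(\lambda)$ and $Q'_j(\lambda)$ (that is, (b)).

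For part~(1) ``$\Leftarrow$'', starting from (a) and (b) I would verify these four vanishings directly; for instance $S(X,0)=\sum_i P_i(\xi)(X+\lambda)^i - \xi X - \lambda\xi$ collapses to $0$ because $P_0(\xi)=0$, $P_1(\xi)=\xi$ and $P_i(\xi)=0$ for $i>1$ reduce the sum to $\xi(X+\lambda)$. Theorem~\ref{teorema 3.1} then supplies a twisting map $s$ with $s(Y\otimes X)=S$, and the translation-conjugate of $s$ is an almost null $s'$ recovering $R$, which also establishes the ``moreover'' claim. For ``$\Rightarrow$'', over a commutative domain the automorphisms of $k[X]$ and $k[Y]$ are the affine maps $X\mapsto aX+b$ and $Y\mapsto cY+d$, so a witness equivalence $s'\simeq s_0$ with $s_0$ of Theorem~\ref{teorema 3.1} form yields
$$
R(X,Y) = (ac)^{-1}\bigl[S_0(aX+b, cY+d) - bcY - adX - bd\bigr].
$$
Evaluating $R$, $\partial_X R$ and $\partial_Y R$ at $X=-b/a$ and $Y=-d/c$ (which annihilate $S_0$ and its first-order partials, since $S_0$ vanishes to order $2$ along $X=0$ and along $Y=0$) extracts exactly (a) and (b) with $\lambda=-b/a$, $\xi=-d/c$.

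For parts~(2) and~(3), I would first observe that conjugation by any affine automorphism preserves upper- and lower-boundedness, since it replaces each $Y^n$ appearing in $s(Y\otimes a)=\sum_n \alpha_n(a)\otimes Y^n$ by a polynomial of degree at most $n$, so uniform vanishing of the $\alpha_n$ for $n\ge n_0$ survives. Assume (a) holds but (b) fails, and suppose for contradiction that an upper bounded $s'$ exists. Conjugating by the translations $(g,f)$ produces an upper bounded $s''$ whose polynomial $S$ satisfies $S(X,0)=\partial_Y S(X,0)=0$ by (a); Theorem~\ref{teorema 3.4}(1) then forces $S(0,Y)=\partial_X S(0,Y)=0$, which unwinds through the expansion in $(Y+\xi)^j$ to the full content of (b), a contradiction. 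Part~(3) is the mirror argument using the flip $\tau$, which exchanges upper- and lower-boundedness, together with Theorem~\ref{teorema 3.4}(2).

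The main obstacle is pinning down the dictionary between the four vanishing conditions on $S$ and the eight polynomial identities in (a), (b); once that is verified, the ``$\Leftarrow$'' construction is immediate, the ``$\Rightarrow$'' extraction is direct evaluation, and parts (2), (3) fall out of Theorem~\ref{teorema 3.4} combined with preservation of boundedness. A minor additional point in ``$\Rightarrow$'' is that general affine equivalence does not impose stronger conditions than translation alone, which holds because dilations preserve the Theorem~\ref{teorema 3.1} form (they only rescale the $q_{ij}$).
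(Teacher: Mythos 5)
Your proposal is correct and follows essentially the same route as the paper's proof: conjugate by the translations $f(Y)=Y-\xi$, $g(X)=X-\lambda$, match the four vanishing conditions of Theorem~\ref{teorema 3.1} on the transformed polynomial with conditions (a) and (b) via binomial expansion, and then invoke Theorems~\ref{teorema 3.1} and~\ref{teorema 3.4}. Your additional remarks --- that over a domain every automorphism of $k[X]$ is affine, that dilations preserve the Theorem~\ref{teorema 3.1} form, and that translation-conjugation preserves upper/lower boundedness --- correctly fill in steps the paper dismisses as ``easy to check.''
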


\begin{proof} It is easy to check that $s'\colon k[Y]\ot k[X] \to k[X]\ot k[Y]$ is an almost
null twisting map if and only if there exist $\lambda,\xi\in k$ such that
$$
s = (g^{-1}\ot f^{-1})\xcirc s'\xcirc (f\ot g)
$$
satisfies the conditions required in Theorem~\ref{teorema 3.1}, where
$$
f\colon k[Y]\to k[Y]\quad\text{and}\quad g\colon k[X]\to k[X]
$$
are the automorphisms defined by $f(Y)=Y-\xi$ and $g(X)= X-\lambda$. Write
$$
s(Y\ot X)=\sum_{ij} q_{ij} X^i\ot Y^j\qquad\text{and}\qquad s'(Y\ot X)=\sum_{ij} q'_{ij} X^i\ot
Y^j.
$$
A direct computation shows that
\begin{align*}
s(Y\ot X) & = \bigl((g^{-1}\ot f^{-1})\xcirc s'\xcirc (f\ot g)\bigr)(x\ot X)\\
&  = \sum_{ij}\!\! \left(\sum_{mn=0}^{\infty} \binom{m}{i} \binom{n}{j} \lambda^{m-i}\xi^{n-j}
q'_{mn}\!\!\right)\! X^i\ot Y^j-X\ot \xi -\lambda\ot Y - \lambda\ot \xi,
\end{align*}
where we adopt the usual convention that a combinatorial numbers are zero if its numerator is
lesser than its denominator. Clearly the following facts hold:

\smallskip

\noindent (1)\enspace $q_{i0} = q_{i1} = 0$ for all $i\ge 0$ if and only if $\lambda$ and $\xi$
satisfy:
\begin{alignat}{2}
&\sum_{mn=0}^{\infty} \lambda^m\xi^nq'_{mn} -\lambda\xi=0,\tag{3.1 a}\\
&\sum_{mn=0}^{\infty} m\lambda^{m-1}\xi^nq'_{mn} -\xi=0,\tag{3.1 b}\\
&\sum_{mn=0}^{\infty} \binom{m}{i}\lambda^{m-i}\xi^nq'_{mn}= 0 && \quad\text{for each
$i>1$},\tag{3.1 c}\\
&\sum_{mn=0}^{\infty} n\lambda^m\xi^{n-1} q'_{mn} -\lambda=0,\tag{3.1 d}\\
&\sum_{mn=0}^{\infty} \binom{m}{i} n \lambda^{m-i}\xi^{n-1} q'_{mn}&&\quad\text{for each
$i>0$}\tag{3.1 e}.
\end{alignat}

\smallskip

\noindent (2)\enspace $q_{0j} = q_{1j} = 0$ for all $j\ge 0$ if and only if $\lambda$ and $\xi$
satisfy:
\begin{alignat}{2}
&\sum_{mn=0}^{\infty} \lambda^m\xi^n q'_{mn} -\lambda\xi=0,\tag{3.2 a}\\
&\sum_{mn=0}^{\infty} n\lambda^m\xi^{n-1} q'_{mn} -\lambda=0,\tag{3.2 b}\\
&\sum_{mn=0}^{\infty} \binom{n}{j}  \lambda^m\xi^{n-j} q'_{mn}&&\quad\text{for each
$j>1$}\tag{3.2 c},\\
&\sum_{mn=0}^{\infty}  m\lambda^{m-1}\xi^nq'_{mn} -\xi=0,\tag{3.2 d}\\
&\sum_{mn=0}^{\infty} m\binom{n}{j} \lambda^{m-1}\xi^{n-j} q'_{mn}&&\quad\text{for each
$j>0$.}\tag{3.2 e}
\end{alignat}

\smallskip

\noindent It is easy to check that conditions (3.1 a)--(3.1 e) are equivalents to
\begin{alignat}{2}
& P_i(\xi) =  \sum_{n=0}^{\infty} \xi^n q'_{in} =0 &&\quad\text{for $i\ne 1$},\tag{3.3 a}\\
&P_1(\xi)-\xi =  \sum_{n=0}^{\infty} \xi^n q'_{1n} -\xi=0,\tag{3.3 b}\\
&P'_0(\xi) - \lambda = \sum_{n=0}^{\infty}  n \xi^{n-1} q'_{0n} -\lambda=0,\tag{3.3 c}\\
&P'_i(\xi) = \sum_{n=0}^{\infty} n \xi^{n-1} q'_{in}&&\quad\text{for $i>0$}\tag{3.3 d},
\intertext{and that conditions (3.2 a)--(3.2 e) are equivalents to}
& Q_j(\lambda) = \sum_{m=0}^{\infty} \lambda^m q'_{mj} =0 &&\quad\text{for $j\ne 1$},\tag{3.4
a}\\
& Q_1(\lambda) - \lambda = \sum_{m=0}^{\infty} \lambda^m q'_{m1} -\lambda=0,\tag{3.4 b}\\
& Q'_0(\lambda) - \xi=  \sum_{m=0}^{\infty} m \lambda^{m-1} q'_{m0} -\xi=0,\tag{3.4 c}\\
& Q'_j(\lambda) = \sum_{m=0}^{\infty} m \lambda^{m-1}q'_{mj}&&\quad\text{for $j>0$}\tag{3.4 d}.
\end{alignat}
So, equalities~(3.3 a)--(3.3 d) correspond to the hypothesis of Theorem~\ref{teorema 3.4}~(1)
and to the thesis of Theorem~\ref{teorema 3.4}~(2) and equalities~(3.4 a)--(3.4 d) correspond
to the hypothesis of Theorem~\ref{teorema 3.4}~(2) and to the thesis of Theorem~\ref{teorema
3.4}~(1). The proof can be easily finished using Theorem~\ref{teorema 3.4} and these remarks.
\end{proof}

\begin{corollary}\label{corolario 3.6} Let $\sum_{ij} q'_{ij} X^i\ot Y^j\in k[X]\ot k[Y]$. The
following facts hold:

\begin{enumerate}

\smallskip

\item If $q'_{i0} = 0$ for all $i$ and $q'_{i1} = 0$ for all $i\ge 1$, then there exists an
upper bounded twisting map $s'\colon k[Y]\ot k[X] \to k[X]\ot k[Y]$ such that
$$
s'(Y\ot X) = \sum_{ij} q'_{ij} X^i\ot Y^j,
$$
if and only if $q'_{01}$ is a multiple root of $Q_j(Z) = \sum\limits_{m\ge 0} q'_{mj} Z^m$ for
each $j>1$.

\smallskip

\item If $q'_{0j} = 0$ for all $j$ and $q'_{1j} = 0$ for all $j\ge 1$, then there exists an
lower bounded twisting map $s'\colon k[Y]\ot k[X] \to k[X]\ot k[Y]$ such that
$$
s'(Y\ot X) = \sum_{ij} q'_{ij} X^i\ot Y^j,
$$
if and only if $q'_{10}$ is a multiple root of $P_i(Z) = \sum\limits_{n\ge 0} q'_{in} Z^n$ for
each $i>1$.

\end{enumerate}

\end{corollary}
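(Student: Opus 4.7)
The plan is to deduce both parts directly from Corollary~\ref{corolario 3.5} by exhibiting specific $\xi,\lambda\in k$ for which one of the conditions~(a) or~(b) there holds automatically under the present hypotheses, so that the other condition reduces exactly to the multiple-root statement.

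For part~(1) I would try $\xi = 0$ and $\lambda = q'_{01}$. The hypotheses $q'_{i0} = 0$ for all $i$ and $q'_{i1} = 0$ for $i\ge 1$ give $P_i(0) = q'_{i0} = 0$ and $P'_i(0) = q'_{i1}$, so every clause of condition~(a) of Corollary~\ref{corolario 3.5} holds: $P_0(0) = 0$, $P_1(0) = 0 = \xi$, $P'_0(0) = q'_{01} = \lambda$, $P'_1(0) = 0$, and $\xi = 0$ is a multiple root of $P_i$ for each $i>1$. Meanwhile $Q_0 \equiv 0$ (since $q'_{m0} = 0$ for all $m$) and $Q_1 \equiv q'_{01}$ is constant (since $q'_{m1} = 0$ for $m\ge 1$), so the corresponding clauses of condition~(b) are also automatic: $Q_0(\lambda) = 0$, $Q_1(\lambda) = \lambda$, $Q'_0(\lambda) = 0 = \xi$ and $Q'_1(\lambda) = 0$. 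The only surviving clause of~(b) is precisely that $\lambda = q'_{01}$ be a multiple root of $Q_j$ for every $j>1$.

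With this reduction the corollary follows from Corollary~\ref{corolario 3.5}: if the multiple-root condition holds, then $(\xi,\lambda) = (0,q'_{01})$ satisfies both~(a) and~(b), and part~(1) of that corollary yields an almost null, hence upper bounded, twisting map with the prescribed $s'(Y\ot X)$; if the multiple-root condition fails, then the same point satisfies~(a) but not~(b), and part~(2) of that corollary forbids any upper bounded twisting map realizing the prescribed $s'(Y\ot X)$.

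Part~(2) is entirely symmetric: I would take $\lambda = 0$ and $\xi = q'_{10}$, observe that the hypotheses $q'_{0j} = 0$ for all $j$ and $q'_{1j} = 0$ for $j\ge 1$ now force condition~(b) for free and collapse condition~(a) to the requirement that $\xi = q'_{10}$ be a multiple root of $P_i$ for each $i>1$, and then appeal to parts~(1) and~(3) of Corollary~\ref{corolario 3.5}. I do not foresee a substantive obstacle; the whole argument is the careful but routine verification that, thanks to the vanishing hypotheses, every clause of~(a) and~(b) except the one appearing in the statement becomes automatic at the chosen $(\xi,\lambda)$, after which Theorem~\ref{teorema 3.4} and Corollary~\ref{corolario 3.5} do all of the real work.
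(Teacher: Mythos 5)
Your proposal is correct and follows essentially the same route as the paper: the paper also reduces part~(1) to Corollary~\ref{corolario 3.5} by checking that $(\lambda,\xi)=(q'_{01},0)$ automatically satisfies condition~(a) and that condition~(b) collapses to the multiple-root requirement, then invokes items~(1) and~(2) of that corollary. The only cosmetic difference is that the paper deduces part~(2) from part~(1) via the flip $\tau$, whereas you verify it symmetrically with $(\lambda,\xi)=(0,q'_{10})$; both are fine.
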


\begin{proof} Item~(2) follows immediately from item~(1), since $s'$ is a twisting map if and
only if $\tau \xcirc s'\xcirc \tau$ is, where $\tau$ is the flip. So, we are reduced to prove
the first item. This follows from Corollary~\ref{corolario 3.5}, since $(q'_{01},0)$ satisfies
the conditions asked for $(\lambda,\xi)$ in item~(1)(a) of that corollary and $(q'_{01},0)$
satisfies those required to $(\lambda,\xi)$ in item~(1)(b) if and only if $q'_{01}$ is a
multiple root of $Q_j(Z) = \sum_{m=0}^{\infty} q'_{mj} Z^m$ for each $j>1$.
\end{proof}

The Corollary gains in interest if we realize that in item~(1) we get all the upper bounded
twisting maps with $\alpha_0 = 0$ and $\alpha_1$ the evaluation at an element of $k$.

\section{Non-commutative extensions of the dual numbers}
\setcounter{equation}{0}
It seems very difficult to compute all the twisting maps $s\colon k[Y]\ot A\to A\ot k[Y]$ for a
particular algebra $A$. In this section we accomplish this for $A=k[t]/\langle t^2 \rangle$
using the evident fact that $s$ is a twisting map if and only if $\tau\xcirc s\xcirc\tau$ is
also, where $\tau$ denotes the flip.

\begin{theorem}\label{teorema 4.1} Let $A$ be an algebra and $s\colon k[t]/\langle t^2
\rangle \ot A\to A\ot k[t]/\langle t^2 \rangle$ a twisting map. The maps $\iota_0\colon A\to A$
and  $\iota_1\colon A\to A$, defined by
\begin{equation}
s(t\ot a)= \iota_0(a) \ot 1 + \iota_1(a)\ot t,\label{eq2}
\end{equation}
satisfy:

\begin{enumerate}

\smallskip

\item $\iota_1$ is a morphism of algebras.

\smallskip

\item $\iota_0(ab)= \iota_0(a)b + \iota_1(a) \iota_0(b)$ (that is, $\iota_0$ is an
$\iota_1$-derivation).

\smallskip

\item $\iota_0^2=0$ and $\iota_0\xcirc \iota_1=-\iota_1\xcirc \iota_0$.

\smallskip

\end{enumerate}
Conversely, given maps $\iota_0\colon A\to A$ and  $\iota_1\colon A\to A$ satisfying (1)--(3),
the formula~\eqref{eq2} determines a twisting map.

\end{theorem}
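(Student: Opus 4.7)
The structure is a direct analogue of Theorem~\ref{teorema 2.1}, but with $k[t]/\langle t^2\rangle$ playing the role of $k[Y]$; since $t^2 = 0$ the compatibility of $s$ with $\mu_B$ becomes a single equation, which is what produces the new relations in item~(3). So the plan is to unwind the three twisting-map axioms applied to a few minimal test elements and collect the identities that appear.

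For the forward direction, assume $s$ is a twisting map. The unit axiom $s(t\ot 1) = 1\ot t$ together with the defining formula~\eqref{eq2} already forces $\iota_0(1) = 0$ and $\iota_1(1) = 1$. Next, I would apply the $\mu_A$-compatibility axiom
$$
s\xcirc (B\ot \mu_A) = (\mu_A\ot B)\xcirc (A\ot s)\xcirc (s\ot A)
$$
to $t\ot a\ot b$. Expanding the right-hand side with~\eqref{eq2} and then equating the $\ot 1$ and $\ot t$ components of both sides with $s(t\ot ab)$ yields, respectively, the derivation relation $\iota_0(ab) = \iota_0(a)b + \iota_1(a)\iota_0(b)$ and the multiplicativity $\iota_1(ab) = \iota_1(a)\iota_1(b)$, which combined with $\iota_1(1)=1$ gives item~(1) and item~(2).

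For item~(3), I would apply the $\mu_B$-compatibility axiom
$$
s\xcirc (\mu_B\ot A) = (A\ot \mu_B)\xcirc (s\ot B)\xcirc (B\ot s)
$$
to $t\ot t\ot a$. On the left the product $t\cdot t = 0$ gives $0$. On the right a straightforward expansion produces
$$
\iota_0^2(a)\ot 1 + \bigl(\iota_1\xcirc\iota_0(a) + \iota_0\xcirc\iota_1(a)\bigr)\ot t + \iota_1^2(a)\ot t^2,
$$
and the last term vanishes since $t^2 = 0$. Matching components yields $\iota_0^2 = 0$ and $\iota_0\xcirc\iota_1 = -\iota_1\xcirc\iota_0$, which is exactly item~(3).

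For the converse, given $\iota_0,\iota_1$ satisfying (1)--(3), I would define $s$ by $s(1\ot a) = a\ot 1$ and~\eqref{eq2}, extending $k$-linearly. To see $s$ is a twisting map I would invoke Proposition~\ref{proposicion 1.1} with $t$ as a generator of $B = k[t]/\langle t^2\rangle$: the unit conditions are immediate from $\iota_0(1)=0$ and $\iota_1(1)=1$; the $\mu_B$-compatibility reduces to the computation on $t\ot t\ot a$ above, which now returns $0$ thanks precisely to condition~(3) (and $t^2=0$); and the $\mu_A$-compatibility on the generator $t$ is exactly the identity that (1) and (2) encode. No step should be hard in principle; the only care needed is bookkeeping of the $\ot 1$ and $\ot t$ components so that each of the three numbered conditions is matched to the correct twisting axiom.
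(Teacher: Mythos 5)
Your proof is correct, and the computations check out: the $\mu_A$-axiom on $t\ot a\ot b$ gives items (1) and (2) from the $\ot t$ and $\ot 1$ components respectively, the $\mu_B$-axiom on $t\ot t\ot a$ gives item (3) since $t^2=0$ kills the $\ot t^2$ term, and Proposition~\ref{proposicion 1.1} with the single generator $t$ handles the converse. The paper leaves this proof to the reader, and your direct verification is exactly the intended argument.
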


\begin{proof} Left to the reader.
\end{proof}

\begin{lemma}\label{combinatorio} We define
$$
A_n^N=\sum_{k=0}^{N-n}(-1)^k\binom{n+k}n
$$
Then the following facts hold:
\begin{align}
& 2A_n^N-A_{n-1}^{N-1} =(-1)^{N-n} \binom  Nn,\label{eq3} \\
& 2A_n^N-A_{n-1}^{N} =(-1)^{N-n} \binom  {N+1}n,\label{eq4} \\
& 2A^N_0=(-1)^N+1,\qquad  A_N^N=1.\label{eq5}
\end{align}

\end{lemma}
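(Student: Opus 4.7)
The plan is to reduce everything to the one-step telescoping identity
$$
A_n^{M}-A_n^{M-1}=(-1)^{M-n}\binom{M}{n},
$$
which is immediate from the definition of $A_n^{M}$ as a partial sum. Starting from this, I would prove (\ref{eq4}) first, then derive (\ref{eq3}) from (\ref{eq4}) by an index shift, and finally handle (\ref{eq5}) as an independent calculation.

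The key identity to establish is the two-term recursion $A_n^{N+1}=A_{n-1}^{N}-A_n^{N}$, from which (\ref{eq4}) will follow at once. I would compute $A_n^{N}-A_{n-1}^{N}$ by lining up the two sums: split off the $k=0$ term of $A_{n-1}^{N}$ (which equals $1$) and shift $k\mapsto k+1$ in the remainder to obtain $A_{n-1}^{N}=1-\sum_{k=0}^{N-n}(-1)^k\binom{n+k}{n-1}$. The ranges now match, and Pascal's rule $\binom{n+k}{n}+\binom{n+k}{n-1}=\binom{n+k+1}{n}$ gives
$$
A_n^{N}-A_{n-1}^{N}=-1+\sum_{k=0}^{N-n}(-1)^k\binom{n+k+1}{n}.
$$
A second shift $l=k+1$ rewrites the right-hand sum as $-(A_n^{N+1}-1)$, yielding $A_n^{N+1}=A_{n-1}^{N}-A_n^{N}$. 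Combining this with the one-step telescope gives
$$
2A_n^{N}-A_{n-1}^{N}=A_n^{N}-A_n^{N+1}=(-1)^{N-n}\binom{N+1}{n},
$$
which is (\ref{eq4}).

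For (\ref{eq3}), apply (\ref{eq4}) with $N$ replaced by $N-1$ to get $A_{n-1}^{N-1}=2A_n^{N-1}+(-1)^{N-n}\binom{N}{n}$, and then
$$
2A_n^{N}-A_{n-1}^{N-1}=2\bigl(A_n^{N}-A_n^{N-1}\bigr)-(-1)^{N-n}\binom{N}{n}=(-1)^{N-n}\binom{N}{n},
$$
where the last equality uses the one-step telescope once more. Finally, (\ref{eq5}) is direct: the sum defining $A_N^{N}$ has the unique term $\binom{N}{N}=1$, while $A_0^{N}=\sum_{k=0}^{N}(-1)^k$ collapses by the standard geometric series to $\tfrac{1+(-1)^N}{2}$, giving $2A_0^{N}=1+(-1)^N$.

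The only delicate point is the bookkeeping of the summation endpoints and signs during the two shifts needed to isolate $A_n^{N+1}$; once that is handled carefully the rest is pure arithmetic. I do not anticipate any structural obstacle beyond that.
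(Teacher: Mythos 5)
Your proof is correct; I checked the recursion $A_n^{N+1}=A_{n-1}^{N}-A_n^{N}$ (the Pascal step and both index shifts are sound), the derivation of \eqref{eq4} from it together with the one-step telescope, the deduction of \eqref{eq3} from \eqref{eq4}, and the direct evaluation in \eqref{eq5}. Your route differs from the paper's in organization: the paper proves \eqref{eq3} first, by writing $A_n^N$ in two ways obtained from pairing consecutive terms via $\binom{n+k}{n}-\binom{n+k+1}{n}=-\binom{n+k}{n-1}$ and summing the two expressions, which forces a case distinction on the parity of $N-n$; it then obtains \eqref{eq4} from \eqref{eq3}. You instead isolate the clean two-term recursion $A_n^{N+1}=A_{n-1}^{N}-A_n^{N}$, prove \eqref{eq4} first, and recover \eqref{eq3} afterwards. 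Both arguments rest on Pascal's rule plus the trivial telescope $A_n^{M}-A_n^{M-1}=(-1)^{M-n}\binom{M}{n}$, but yours avoids the parity split entirely, at the cost of one extra layer of index bookkeeping; the paper's pairing argument is more computational but proves the identity stated first in the lemma directly.
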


\begin{proof} Assume $N-n=2j$ is even. Since $\binom{n+k}n-\binom{n+k+1}n =-\binom{n+k}{n-1}$,
$$
A_n^N = - \sum_{k=0}^{j-1}\binom{n+2k}{n-1}+ \binom  Nn\quad\text{and}\quad A_n^N=
\sum_{k=0}^j\binom{n+2k-1}{n-1}.
$$
Summing both results, we obtain
$$
2A_n^N=\sum_{k=0}^{N-n}(-1)^k\binom{n-1+k}{n-1}+(-1)^{N-n} \binom{N}{n},
$$
and so
$$
2A_n^N=A_{n-1}^{N-1}+(-1)^{N-n}\binom{N}{n},
$$
which is~\eqref{eq3}. The case $N-n$ odd is similar. The equality~\eqref{eq4} follows
from~\eqref{eq3}. Finally,~\eqref{eq5} can be easily checked by a direct computation.
\end{proof}

\begin{lemma}\label{sistemasequivalentes} A vector $y=(y_i)\in k^m$ satisfies the set of
equalities
$$
\sum_{i=h}^{m-1}y_{i+1}A_{i-h}^i=0 \qquad for\ all \ h=0,\dots, m-1,
$$
where $A^N_n=\sum_{k=0}^{N-n}(-1)^k\binom{n+k}{n}$, if and only if it satisfies the set of
equalities:
$$
\sum_{i=h}^m \binom ih y_i=(-1)^h y_h\qquad for\ all \ h= 0,\dots, m.
$$
\end{lemma}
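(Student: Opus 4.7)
The plan is to show that each equation in the second system is a $k$-linear combination of equations in the first system, and vice versa, with explicit coefficients that require no inversion of $2$ (so that the equivalence holds over an arbitrary commutative ring). Set
\[
E_h:=\sum_{i=h}^{m-1}y_{i+1}A_{i-h}^{i}\quad(0\le h\le m-1),\qquad F_h:=\sum_{i=h}^{m}\binom{i}{h}y_i-(-1)^h y_h\quad(0\le h\le m),
\]
so that the two systems read $\{E_h=0\}_{h\le m-1}$ and $\{F_h=0\}_{h\le m}$.

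The core step is to establish the three identities
\[
F_0=E_0,\qquad F_h=(-1)^{h-1}\bigl(2E_{h-1}-E_h\bigr)\ (1\le h\le m-1),\qquad F_m=2(-1)^{m-1}E_{m-1}.
\]
The boundary cases $h=0$ and $h=m$ are direct computations using $A_j^{j}=1$ and $2A_0^{h}=1+(-1)^{h}$ from~\eqref{eq5}. For the intermediate range I would first perform the index shift $i=j+1$ to rewrite $F_h=(1-(-1)^h)y_h+\sum_{j=h}^{m-1}\binom{j+1}{h}y_{j+1}$, then apply identity~\eqref{eq4} in the form
\[
\binom{j+1}{h}=(-1)^{h-1}\bigl(2A_{j-h+1}^{j}-A_{j-h}^{j}\bigr),
\]
and recognize that $\sum_{j=h}^{m-1}A_{j-(h-1)}^{j}y_{j+1}=E_{h-1}-A_{0}^{h-1}y_h$. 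Using $2A_0^{h-1}=1+(-1)^{h-1}$ once more, the coefficient of $y_h$ collapses to zero, leaving precisely $(-1)^{h-1}(2E_{h-1}-E_h)$.

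Once these identities are in place, the equivalence follows immediately. If every $E_h$ vanishes, every $F_h$ vanishes by the three formulas above. Conversely, if every $F_h$ vanishes, then $E_0=F_0=0$ and a straightforward induction using $E_h=2E_{h-1}-(-1)^{h-1}F_h$ yields $E_h=0$ for all $1\le h\le m-1$; the relation $F_m=2(-1)^{m-1}E_{m-1}$ is then automatically consistent. The main obstacle is the signed combinatorial bookkeeping in the intermediate case—choosing the correct instance of~\eqref{eq4} and verifying that the two applications of~\eqref{eq5} conspire to cancel the otherwise troublesome $y_h$ term—which is exactly what permits the equivalence to hold without assuming that $2$ is invertible in~$k$.
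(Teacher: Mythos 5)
Your proof is correct and follows essentially the same route as the paper's: your identities $F_0=E_0$, $F_h=(-1)^{h-1}(2E_{h-1}-E_h)$ and $F_m=2(-1)^{m-1}E_{m-1}$ are exactly the paper's claim $2A(h)-A(h+1)=(-1)^hB(h+1)$ (with the index shifted by one and the empty sum $A(m)=0$ made explicit), established by the same combination of identities~\eqref{eq4} and~\eqref{eq5}, and the two directions of the equivalence are deduced in the same way. No gaps.
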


\begin{proof}
Set
$$
A(h) = \sum_{i=h}^{m-1}y_{i+1}A_{i-h}^i\quad\text{and}\quad B(h)=\sum_{i=h}^m \binom ih
y_i-(-1)^h y_h.
$$
We claim that
$$
2A(h)-A(h+1)=(-1)^h B(h+1)\qquad for\ all\ h=0,\dots, m-1.
$$
From this equality and from $A(0)=B(0)$ it follows by induction on $h$ that
$$
A(h)=\sum_{k=0}^h(-1)^k2^{h-k}B(k),
$$
and hence the lemma. Now we prove the claim:
\begin{eqnarray*}
2A(h)-A(h+1) &=& \sum_{i=h}^{m-1}2A_{i-h}^iy_{i+1}- \sum_{i=h+1}^{m-1}A_{i-h-1}^iy_{i+1}\\
&=& \sum_{i=h+1}^{m-1}(2A_{i-h}^i-A_{i-h-1}^i)y_{i+1} + y_{h+1}2A_0^h\\
&=&  \sum_{i=h+1}^{m-1}(-1)^h \binom{i+1}{i-h}y_{i+1} + y_{h+1}((-1)^h+1) \\
&=&  \sum_{i=h+1}^m(-1)^h \binom{i}{h+1}y_i + y_{h+1} \\
&=& (-1)^h B(h+1),
\end{eqnarray*}
where the third equality follows from equalities~\eqref{eq4} and \eqref{eq5}.
\end{proof}

\begin{theorem}\label{teorema 4.2} Let $s\colon k[t]/\langle t^2 \rangle \ot k[Y]\to
k[Y]\ot k[t]/\langle t^2 \rangle$ be a twisting map and let $\iota_j\colon k[Y]\to k[Y]$
($j=0,1$) be the maps introduced in Theorem~\ref{teorema 4.1}. Write $\iota_0(Y)=Q$ and
$\iota_1(Y)=P$. If $Q = \sum_{i=0}^m q_i Y^i\neq 0$, then $P=-Y+p_0$ and

\begin{enumerate}

\smallskip

\item If $p_0 = 0$, then $q_i = 0$ for $i$ odd,

\smallskip

\item If $p_0 \ne 0$, then $\displaystyle{\sum_{j=i}^m \binom ji q_j p_0^{j-i}=(-1)^i
q_i}$ for all $i=0,\dots, m$.

\smallskip
\end{enumerate}
(Note that item~(2) implies that $m$ is even). Conversely, if $Q=0$ and $P$ is arbitrary or $P
=-Y+p_0$ and conditions~(1), (2) are satisfied, then there is a unique twisting map $s\colon
k[t]/\langle t^2 \rangle \ot k[Y]\to k[Y]\ot k[t]/\langle t^2 \rangle$ such that $\iota_0(Y)=Q$
and $\iota_1(Y)=P$, where the maps $\iota_0$ and $\iota_1$ are defined as in
Theorem~\ref{teorema 4.1}.
\end{theorem}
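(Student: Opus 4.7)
The plan is to use Theorem~\ref{teorema 4.1} to encode $s$ by the pair $(\iota_0,\iota_1)$ and to note that, since $\iota_1\colon k[Y]\to k[Y]$ is an algebra morphism and $\iota_0$ is an $\iota_1$-derivation, these data are determined by the polynomials $P=\iota_1(Y)$ and $Q=\iota_0(Y)$. A straightforward induction on $n$ yields $\iota_0(Y^n)=Q\sum_{l=0}^{n-1}P^lY^{n-1-l}$, and hence
\[
\iota_0(f(Y))=Q\cdot\frac{f(Y)-f(P)}{Y-P}\quad\text{for every }f(Y)\in k[Y],
\]
where the quotient is the genuine polynomial $\sum_j a_j(Y^{j-1}+Y^{j-2}P+\cdots+P^{j-1})$ associated to $f=\sum a_jY^j$. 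A routine check shows that $\iota_0\iota_1+\iota_1\iota_0$ is an $(\iota_1^2,\iota_1)$-derivation and, assuming it vanishes, that $\iota_0^2$ is an $\iota_1^2$-derivation; this reduces conditions~(2) and~(3) of Theorem~\ref{teorema 4.1} to the two polynomial identities $\iota_0(Q)=0$ and $\iota_0(P)+Q(P)=0$ in $k[Y]$.

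Assume now $Q\neq 0$, so that $Q$ is not a zero divisor in $k[Y]$. From $\iota_0(Q)=Q\cdot(Q(Y)-Q(P))/(Y-P)=0$ I would conclude $Q(P)=Q(Y)$, and then the second identity becomes $Q\bigl[(P-P(P))/(Y-P)+1\bigr]=0$, whence $P(P)=Y$. Thus $P$ is an algebra involution of $k[Y]$; comparing degrees in $P(P)=Y$ forces $\deg P=1$, and writing $P=aY+p_0$ gives $a^2=1$ and $p_0(a+1)=0$. The case $a=1$ would make $\iota_1$ the identity, so $\iota_0\iota_1+\iota_1\iota_0=2\iota_0=0$ would force $Q=0$, a contradiction; therefore $a=-1$ and $P=-Y+p_0$, as claimed.

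Next I would substitute $P=-Y+p_0$ into $\iota_0(Q)=0$ and unravel the coefficients. Expanding $P^l$ by the binomial theorem, one computes $T:=(Q(Y)-Q(P))/(Y-P)=\sum_{r=0}^{m-1}t_rY^r$ with
\[
t_r=\sum_{i=r+1}^m q_i\,p_0^{i-r-1}\,A_{i-r-1}^{i-1}.
\]
If $p_0=0$ only the term $i=r+1$ survives, so $t_r=q_{r+1}A_0^r$; since $2A_0^r=1+(-1)^r$ by Lemma~\ref{combinatorio}, the identity $T=0$ amounts precisely to $q_i=0$ for every odd $i$, i.e.\ item~(1). If $p_0\neq 0$, the substitution $y_i:=q_ip_0^i$ rewrites the equations $t_r=0$ as $\sum_{j=r}^{m-1}y_{j+1}A_{j-r}^{j}=0$, and Lemma~\ref{sistemasequivalentes} identifies this system with $\sum_{i=h}^m\binom{i}{h}y_i=(-1)^hy_h$ for $h=0,\dots,m$, which is item~(2); its case $h=m$ reads $q_m=(-1)^mq_m$, forcing $m$ to be even. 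For the converse, if $Q=0$ there is nothing to prove, and if $P=-Y+p_0$ together with (1) or (2) hold then $Q(P)=Q(Y)$ holds by construction, so $\iota_0(Q)=0$, while $P(P)=Y$ gives $\iota_0(P)=-Q$ and thus $\iota_0(P)+Q(P)=-Q+Q=0$; Theorem~\ref{teorema 4.1} then produces the (uniquely determined) twisting map. The hardest step will be the explicit coefficient computation identifying the family $(t_r)$ with the first system of Lemma~\ref{sistemasequivalentes}, which is the reason for introducing Lemmas~\ref{combinatorio} and~\ref{sistemasequivalentes}.
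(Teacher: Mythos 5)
Your proposal follows the same overall strategy as the paper --- the induction giving $\iota_0(Y^l)=Q\sum_{i=0}^{l-1}P^iY^{l-1-i}$, the reduction of condition~(3) of Theorem~\ref{teorema 4.1} to the two identities $\iota_0^2(Y)=0$ and $(\iota_0\xcirc\iota_1+\iota_1\xcirc\iota_0)(Y)=0$, the same coefficient systems, and the same two combinatorial lemmas --- but two sub-arguments are genuinely different. First, you obtain $P=-Y+p_0$ from the functional equations $Q(P)=Q(Y)$ and $P(P)=Y$ (so $\iota_1$ is an involution fixing $Q$), which is slicker than the paper's case analysis on $\deg P$ followed by extracting $p_1=-1$ from the leading coefficients via $\sum_{j=0}^{m-1}p_1^j=0$ and $p_1^m=-p_1$; note the paper's deduction of $p_1=-1$ works over any ring, whereas $a^2=1\Rightarrow a=\pm1$ and the exclusion of $a=1$ need $k$ to be a domain of characteristic $\neq 2$ (in characteristic $2$ the case $a=1$ is the case $a=-1$, so nothing needs excluding). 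Second, you extract item~(2) from $\iota_0(Q)=0$ via the \emph{equivalence} of Lemma~\ref{sistemasequivalentes}, while the paper reads item~(2) directly off the anticommutation relation $\iota_1(\iota_0(Y))=-\iota_0(\iota_1(Y))$ (which, once $P=-Y+p_0$, is literally the statement $Q(P)=Q(Y)$) and uses the lemma only in the converse direction. Both routes are valid since the lemma is an honest equivalence.

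Two steps need tightening. (i) Your cancellations of $Q$, of $Y-P=2Y-p_0$, and of $p_0^{r+1}$ all require these to be non--zero-divisors; this is automatic over a domain of characteristic $0$ (the setting of the subsequent corollary) and the paper's own degree arguments make the same tacit assumption, but your explicit claim ``$Q\neq0$ implies $Q$ is not a zero divisor in $k[Y]$'' is false for general commutative $k$. (ii) In the converse, the inference ``$Q(P)=Q(Y)$ holds by construction, so $\iota_0(Q)=0$'' goes the wrong way: $\iota_0(Q)=Q\cdot T$ while $Q(Y)-Q(P)=(Y-P)\,T$, so $Q(P)=Q(Y)$ only gives $(Y-P)T=0$, which does not yield $T=0$ when $Y-P$ is a zero divisor (e.g.\ $p_0=0$ in characteristic $2$). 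The fix is already in your own computation: conditions~(1)/(2) give $t_r=0$ for every $r$ directly (for $p_0\neq0$ via Lemma~\ref{sistemasequivalentes} and cancellation of $p_0^{r+1}$), hence $T=0$ and $\iota_0(Q)=0$; route the converse through that instead of through $Q(P)=Q(Y)$.
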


\begin{proof} Let $s$ be a twisting map. Assume $Q\neq 0$. In the sequel we adopt the
convention that $P^0=1$ even if $P=0$. It is easy to check by induction on $l$, that
\begin{equation}
\iota_0(Y^l)= \sum_{i=0}^{l-1} \iota_1(Y)^i\iota_0(Y)Y^{l-i-1}= Q\sum_{i=0}^{l-1}P^iY^{l-i-1}.
\label{eq7}
\end{equation}
We claim that $\dg(P) = 1$. Suppose $\dg(P) \ne 1$ or $P=0$. Let $l\ge 1$. If $\dg(P)>1$, then
$\iota_0(Y^l)$ has degree $(l-1)\dg(P)+\dg(Q)$, and if $P$ is a constant, then it has degree
$l-1+\dg(Q)$. In  both cases it is easy to see that $\iota_0^2=0 \Leftrightarrow \dg(Q)=0$. Let
$Q=q\in k\setminus\{0\}$. It is immediate that if $\dg(P)>1$, then
$\iota_0(\iota_1(Y))=\iota_0(P)$ has degree $(\dg(P) - 1)\dg(P)$. Since
$\iota_1(\iota_0(Y))=q$, it is impossible that $\iota_0 \xcirc\iota_1 = -\iota_1\xcirc\iota_0$.
So $P=0$ or $\dg(P) \le 1$. If $P$ is a constant, then $\iota_0(\iota_1(Y))=0$ and also in this
case $\iota_0\xcirc\iota_1\ne-\iota_1\xcirc\iota_0$. This proves the claim. Write $P =
p_1Y+p_0$. We assert that the following facts hold

\begin{enumerate}

\smallskip

\item[(3)] $\displaystyle{\iota_0^2(Y) = Q\sum_{h=0}^{m-1}\sum_{i=h}^{m-1}
\sum_{j=i-h}^i\binom{j}{i-h}p_1^{j-i+h} p_0^{i-h} q_{i+1} Y^h}$.

\smallskip

\item[(4)] $\displaystyle{\iota_1(\iota_0(Y))=\sum_{i=0}^m\sum_{j=i}^{m}\binom{j}{i}
q_j p_0^{j-i} p_1^i  Y^i}$ and $\displaystyle{\iota_0(\iota_1(Y))=p_1\sum_{i=0}^m q_iY^i}$.

\smallskip

\end{enumerate}
In fact by~\eqref{eq7}, we have
$$
\iota_0^2(Y) = \sum_{i=0}^m q_i\iota_0(Y^i)=Q\sum_{i=1}^m\sum_{j=0}^{i-1} q_i P^jY^{i-j-1}.
$$
Since $P^j=\sum_{l=0}^j\binom{j}{l}p_1^{j-l}p_0^lY^{j-l}$, this gives
$$
\iota_0^2(Y) = Q\sum_{i=1}^m\sum_{j=0}^{i-1}\sum_{l=0}^jq_i \binom{j}{l}
p_1^{j-l}p_0^lY^{i-l-1}.
$$
A change of indices yields item~(3). Next we check item~(4). We have
$$
\iota_1(\iota_0(Y))=\sum_{j=0}^mq_j\iota_1(Y)^j= \sum_{j=0}^m \sum_{l=0}^j q_j\binom{j}{l}
p_1^{j-l}p_0^lY^{j-l}=\sum_{j=0}^m\sum_{i=0}^j q_j\binom{j}{i} p_1^ip_0^{j-1}Y^i.
$$
Interchanging the sums we obtain item~(4), since the second equality is clear. Considering now
the terms of maximal degree in items~(3) and~(4), we get
$$
\sum_{j=0}^{m-1}p_1^j=0\qquad\text{and}\qquad p_1^m=-p_1,
$$
since, by Theorem~\ref{teorema 4.1} we know that
\begin{equation}
\iota_0^2(Y)=0\qquad\text{and}\qquad \iota_1(\iota_0(Y))= -\iota_0(\iota_1(Y)).\label{pepe}
\end{equation}
Hence $p_1=-1$ and equalities~(3) and~(4) become

\begin{enumerate}

\smallskip

\item[(5)] $\displaystyle{\iota_0^2(Y) = Q\sum_{h=0}^{m-1}\sum_{i=h}^{m-1}
\sum_{j=i-h}^i \binom{j}{i-h} (-1)^{j-i+h} p_0^{i-h} q_{i+1} Y^h}$.

\smallskip

\item[(6)] $\displaystyle{\iota_1(\iota_0(Y))=\sum_{i=0}^m\sum_{j=i}^{m}\binom{j}{i}
q_j p_0^{j-i} (-1)^i  Y^i}$ and $\displaystyle{\iota_0(\iota_1(Y))= -\sum_{i=0}^m q_iY^i}$.

\smallskip

\end{enumerate}
From this it follows immediately that equalities~\eqref{pepe} implies items~(1) and (2). Now we
prove the second part. First note that for $P,Q\in k[Y]$ arbitrary, equalities $\iota_0(Y) = Q$
and $\iota_1(Y) = P$ determine unique maps $\iota_0,\iota_1\colon k[Y]\to k[Y]$ satisfying
conditions~(1) and~(2) of Theorem~\ref{teorema 4.1}. Clearly condition~(3) is also fulfilled if
and only if $\iota_0^2(Y)=0$ and $\iota_1(\iota_0(Y))=-\iota_0(\iota_1(Y))$. When $Q=0$ these
last equalities are trivially true.  Assume now $Q \neq 0$ and $P = -Y + p_0$. Then, arguing as
above, we obtain that~(5) and~(6) are satisfied. From this it follows immediately that, if $p_0
= 0$ and $q_i = 0$ for $i$ odd, then equalities~\eqref{pepe} are true. Assume now that $p_0\ne
0$. In this case from~(5) and~(6) it follows that $\iota_0^2(Y)=0$ is equivalent to the fact
that the $q_i$'s satisfy
$$
\sum_{i=h}^{m-1} q_{i+1}p_0^{i+1} A_{i-h}^i=0\qquad \text{for all $h= 0,\dots,m-1$,}
$$
where $A^N_n:=\sum_{k=0}^{N-n}(-1)^k\binom{n+k}{n}$ and that $\iota_1(\iota_0(Y))=
-\iota_0(\iota_1(Y))$ is equivalent to the fact that the $q_i$'s satisfy
$$
\sum_{j=i}^m \binom ji q_jp_0^{j-i}=(-1)^iq_i\qquad\text{for all $i= 0,\dots, m$,}
$$
which is true by item~(2). But, by Lemma~\ref{sistemasequivalentes}, applied to
$\{y_i=q_ip_0^i\}$, the last set of equalities implies the first one. So the theorem is proved.
\end{proof}

In the previous theorem we found necessary and sufficient conditions, on polynomials $P,Q\in
k[Y]$, in order that a twisting map
$$
s\colon k[t]/\langle t^2\rangle\ot k[Y]\to k[Y]\ot k[t]/\langle t^2\rangle
$$
such that $s(t\ot Y) = P\ot t + Q\ot 1$ exists. If $Q=0$, then $P$ is arbitrary and if $Q\ne 0$
then $P = -Y+p_0$ and items~(1) or~(2) of Theorem~\ref{teorema 4.2} must be satisfied,
depending on if $p_0 = 0$ or $p_0\ne 0$. In the first case the condition is simply that $Q\in
k[Y^2]$. The second case is more involved and we give a complete solution under the hypothesis
that $k$ is a characteristic zero field.

\begin{corollary} (Classification of the non-commutative extension of the algebra of
dual numbers by $k[Y]$). Let $k$ be a characteristic zero field. If $Q\ne 0$, then any choice
of $p_0\in k\setminus\{0\}$, $m$ even and $q_0,q_2,\dots, q_m\in k$, with $q_m\ne 0$ determines
univocally polynomials $P=-Y+p_0$ and $Q=\sum_{i=0}^m q_i Y^i$ satisfying condition~(2) of
Theorem~\ref{teorema 4.2}.
\end{corollary}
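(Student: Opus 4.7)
The plan is to translate condition~(2) of Theorem~\ref{teorema 4.2} into a functional equation on $Q$, and then reduce the corollary to a short linear-algebra statement about symmetric polynomials.

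First, I would observe that $\sum_{j=i}^{m}\binom{j}{i} q_j p_0^{j-i}$ is nothing but the coefficient of $Y^i$ in the Taylor expansion of $Q(Y+p_0)$, while $(-1)^i q_i$ is the coefficient of $Y^i$ in $Q(-Y)$. Hence condition~(2) is equivalent to the polynomial identity
$$
Q(Y+p_0)=Q(-Y),
$$
or, equivalently after the substitution $Y\mapsto -Y$, to $Q(Y)=Q(p_0-Y)$; that is, $Q$ is symmetric about $p_0/2$. Such polynomials of degree $\le m$ are exactly the polynomials in the variable $(Y-p_0/2)^2$ of degree $\le m/2$, so they form a $k$-subspace $V\subseteq k[Y]$ of dimension $m/2+1$.

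Next I would consider the linear map $\pi\colon V\to k^{m/2+1}$ sending $Q$ to the tuple of its even-indexed coefficients $(q_0,q_2,\dots,q_m)$. Since the source and target have the same (finite) dimension, it suffices to check that $\pi$ is injective; existence and uniqueness of the odd-indexed coefficients will then follow formally, and the additional requirement $q_m\ne 0$ is automatic since $q_m$ is itself a given even-indexed coefficient. The main step is therefore this injectivity.

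The hard part is precisely the injectivity: assume $Q\in V$ has all even-indexed coefficients zero, i.e.\ $Q(-Y)=-Q(Y)$. Combining this with $Q(Y+p_0)=Q(-Y)$ yields $Q(Y+p_0)=-Q(Y)$, and iterating once more gives $Q(Y+2p_0)=Q(Y)$. Since $k$ has characteristic zero and $p_0\ne 0$, the period $2p_0$ is nonzero in $k$, and a nonzero polynomial over a field of characteristic zero cannot be periodic with a nonzero period; hence $Q=0$. Together with the dimension count, this shows that for every choice of $(q_0,q_2,\dots,q_m)$ there is a unique $Q=\sum_{i=0}^{m}q_iY^i$ in $V$, i.e.\ a unique completion of the odd-indexed coefficients such that condition~(2) of Theorem~\ref{teorema 4.2} holds. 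Paired with the prescribed $P=-Y+p_0$, this produces univocally the desired polynomials.
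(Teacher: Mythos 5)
Your proof is correct, and it takes a genuinely different route from the paper's. The paper substitutes $y_i=q_ip_0^i$, writes condition~(2) as an explicit homogeneous linear system $\mathbf{C}\mathbf{y}=\mathbf{0}$, and computes the rank of the $(m+1)\times(m+1)$ matrix $\mathbf{C}$: the odd-indexed rows are visibly independent (giving rank at least $m/2$), and a combinatorial identity shows each even-indexed column is a linear combination of the preceding ones (giving rank at most $m/2$); from this it follows that the even-indexed $y_i$ are a complete set of free parameters. You instead recognize condition~(2) as the functional equation $Q(Y+p_0)=Q(-Y)$, i.e.\ symmetry of $Q$ about $p_0/2$, identify the solution space as the $(m/2+1)$-dimensional space of polynomials in $(Y-p_0/2)^2$ of degree at most $m$, and prove bijectivity of the projection onto the even coefficients by a dimension count plus injectivity. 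The injectivity step is clean, with one tiny imprecision: a nonzero \emph{constant} polynomial is periodic with every period, so the assertion that a nonzero polynomial cannot be periodic is literally false; you should say that periodicity forces $Q$ to be constant, and then oddness (or $Q(Y+p_0)=-Q(Y)$) forces the constant to vanish in characteristic zero. With that one-line repair your argument is complete. Your approach is shorter and more conceptual, and it makes transparent both why $m$ must be even and why exactly the even-indexed coefficients parametrize the solutions; the paper's computation, though heavier, yields the explicit matrix and the binomial identities of Lemma~\ref{combinatorio}, which are reused elsewhere in Section~4.
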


\begin{proof} For each $i\ge 0$, let $y_i = q_ip_0^i$. Consider
$\mathbf{y}=(y_0,\dots,y_m)$ as a column vector. By item~(3) of Theorem~\ref{teorema 4.2},
there is a twisting map
$$
s\colon k[t]/\langle t^2\rangle\ot k[Y]\to k[Y]\ot k[t]/\langle t^2\rangle
$$
such that $s(t\ot Y) = P\ot t + Q\ot 1$, if and only if $\sum_{j=i}^m \binom ji y_j-(-1)^i
y_i=0$ for $i=0,\dots, m$. Write the system of equations
$$
B(i)=\sum_{j=i}^m \binom ji y_j-(-1)^i y_i=0\qquad (j=0,\dots,m)
$$
in the matrix form $\mathbf{C} \mathbf{y} = \mathbf{0}$. It is then easy to see that
$\mathbf{C} = (c_{ij})_{{}_{0\le i,j\le m}}$ is
$$
\begin{pmatrix}
0&1&1&1&1&1&1& \hdotsfor{2}&  1 \\[3pt]
0&2&2&3&4&5&6& \hdotsfor{2}& \binom m1\\[3pt]
0&0&0&3&6&10&15 &\hdotsfor{2}& \binom m2\\[3pt]
0&0&0&2&4&10&20 &\hdotsfor{2}& \binom m3\\[3pt]
0&0&0&0&0&5&15 &\hdotsfor{2}& \binom m4\\[3pt]
0&0&0&0&0&2&6 &\hdotsfor{2}& \binom m5\\[3pt]
\vdots&\vdots&\vdots & \vdots&\vdots&\vdots&\vdots&\ddots &&\vdots\\[3pt]
0&0&0&0&0&0&0 &\cdots& \binom{m-1}1&\binom m2\\[3pt]
0&0&0&0&0&0&0 &\cdots& 2&\binom m1
\end{pmatrix}
$$
By the shape of this matrix it is clear that the even rows are linearly independent and so we
only need to prove that $\rank(\mathbf{C}) = m/2$. For this it suffices to check that
\begin{equation}
\sum_{k=0}^n (-1)^k c_{i,2n-k}\binom{n}{k}=0\qquad\text{for $n\ge 1$ and $i=0,\dots,m$,}
\label{eq8}
\end{equation}
since then the even columns will be linear combinations of the previous ones. Let
$$
D_{ij}^{(n)} = \begin{cases} \binom{j-n}{i-n} & \text{if $n\le i\le j$,}\\ 0 &
\text{otherwise,}\end{cases}\quad\text{and}\quad E_{ij}^{(n)} = \begin{cases} (-1)^{j+1}
\binom{n}{j-i} & \text{if $j-n\le i\le j$,}\\ 0 & \text{otherwise.}\end{cases}
$$
Since $E_{i,2n}^{(n)} + D_{i,2n}^{(n)} = 0$ in order to prove~\eqref{eq8} it is enough to show
that
$$
E_{ij}^{(n)} + D_{ij}^{(n)} = \sum_{k=0}^n \binom{n}{k} (-1)^k c_{i,j-k}\quad\text{for
$i=0,\dots,m$ and $j\ge n$.}
$$
This follows immediately from the equalities
\begin{align*}
& E^{(0)}_{ij}+ D^{(0)}_{ij} = c_{ij},\\
&D^{(n)}_{ij}=\sum_{k=0}^n (-1)^k D^{(0)}_{i,j-k}\binom nk,\\
& E^{(n)}_{hj}=\sum_{k=0}^n (-1)^k E^{(0)}_{i,j-k}\binom nk,
\end{align*}
for $i=0,\dots,m$ and $j\ge n$. The first and the third one can be checked by a direct
computation, while the second one by induction on $n$.
\end{proof}

Let $s$, $P$ and $Q$ be as in Theorem~\ref{teorema 4.2}. Let $\alpha_j\colon k[t]/\langle
t^2\rangle\to k[t]/\langle t^2\rangle$ be the maps defined by
$$
\tau\xcirc s \xcirc \tau (Y\ot t)= \sum_{j=0}^{\infty} \alpha_j(t)\ot Y^j,
$$
where $\tau$ is the flip. If $Q=0$ and $P=\sum_{i=0}^n p_iY^i$, then $\alpha_j(t)=p_jt$. If $P$
and $Q$ are as in items~(1) and (2) of Theorems~\ref{teorema 4.2}, then
$$
\alpha_j(t)=\begin{cases}
                         q_0+p_0t &\quad\text{if $j=0$,}\\
                         q_1-p_1t &\quad\text{if $j=1$,}\\
                         q_j      &\quad\text{if $2\le j\le m$,}\\
                         0        &\quad\text{if $j>m$.}\\
            \end{cases}
$$

\section{Twisted extensions by power series}
\setcounter{equation}{0}
Let $k$ be a commutative ring. This section is devoted to the study of twisting tensor products
between the power series ring $k[[Y]]$ and a filtrated complete algebra $A$. Hence we work in
the monoidal category $\CMod$ of complete filtrated $k$-modules (see Section~1). Recall that
the tensor product of $\CMod$ is denoted by $\hat{\ot}$. We will use freely the notations
introduced in Sections~1 and~2.

\begin{lemma}\label{lemma 5.1} Let $A$ be a filtrated complete algebra and $(\alpha_j\colon
A\to A)_{j\ge 0}$ a family of continuous maps. If for each $i\ge 0$ there exists $n_0\ge 0$
such that $\alpha_0^n(A)\subseteq A_i$ for all $n\ge n_0$, then for each $i,j\ge 0$ there
exists $r_0\ge 0$ such that $\gamma^{(r)}_j(A_h)\subseteq A_i$ whenever $r+h\ge r_0$.
\end{lemma}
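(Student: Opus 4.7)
The plan is strong induction on $j$, with the bound $r_0(j,i)$ produced explicitly at each stage.

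\textbf{Base case $j=0$.} Here $\gamma_0^{(r)} = \alpha_0^r$, since the only tuple $(n_1,\dots,n_r)$ with $|n_1,\dots,n_r|=0$ is the zero tuple. The hypothesis supplies $N(i)$ with $\alpha_0^r(A)\subseteq A_i$ for $r\ge N(i)$; for each of the finitely many $r<N(i)$, continuity of the finite composition $\alpha_0^r$ yields $H_{r,i}$ with $\alpha_0^r(A_h)\subseteq A_i$ for $h\ge H_{r,i}$. Setting $r_0(0,i):=N(i)+\max_{r<N(i)}H_{r,i}$, the condition $r+h\ge r_0(0,i)$ forces either $r\ge N(i)$ (hypothesis) or $h\ge\max_r H_{r,i}$ (continuity).

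\textbf{Inductive step $j\ge 1$.} Fix $i$, and let $\alpha_{n_1\cdots n_r}$ be a summand of $\gamma_j^{(r)}$, so $|n_1,\dots,n_r|=j$. Since $j\ge 1$, there is a rightmost index $q$ with $n_q>0$, and the composition factors as
$$
\alpha_{n_1\cdots n_r}=\alpha_{n_1\cdots n_{q-1}}\xcirc\alpha_{n_q}\xcirc\alpha_0^{r-q},
$$
where the left factor has length $q-1$ and indices summing to $j-n_q<j$. Put $r_1:=\max_{1\le n\le j}r_0(j-n,i)$, finite by the inductive hypothesis. Using continuity of the finitely many $\alpha_1,\dots,\alpha_j$, choose a uniform modulus $K(i')$ with $\alpha_n(A_{K(i')})\subseteq A_{i'}$ for every $n\in\{1,\dots,j\}$.

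I then split on $q$. If $q\ge r_1+1$, the inductive hypothesis applied with $h'=0$ (since $(q-1)+0\ge r_1\ge r_0(j-n_q,i)$) already yields $\alpha_{n_1\cdots n_{q-1}}(A)\subseteq A_i$, and the whole composition lands in $A_i$ irrespective of $r+h$. If instead $q\le r_1$, pick the intermediate filtration level $h'_q:=r_1-q+1$: the inductive hypothesis gives $\alpha_{n_1\cdots n_{q-1}}(A_{h'_q})\subseteq A_i$ (as $(q-1)+h'_q=r_1$), continuity gives $\alpha_{n_q}(A_{K(h'_q)})\subseteq A_{h'_q}$, and the base case gives $\alpha_0^{r-q}(A_h)\subseteq A_{K(h'_q)}$ whenever $(r-q)+h\ge r_0(0,K(h'_q))$. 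Setting
$$
r_0(j,i):=\max_{1\le q\le r_1}\bigl[q+r_0(0,K(r_1-q+1))\bigr],
$$
a finite maximum, the condition $r+h\ge r_0(j,i)$ closes Case~B uniformly, and Case~A requires no additional condition.

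The principal obstacle is the uniformity of the bound over the infinite family of summands of $\gamma_j^{(r)}$ as $r\to\infty$. It is resolved by the observation that one only needs to track the position $q$ of the rightmost positive index: when $q>r_1$ the inductive hypothesis already delivers the conclusion with no demand on $h$, while when $q\le r_1$ there are only finitely many possibilities, so a maximum over $q$ yields a single effective threshold. This finiteness phenomenon at each inductive step is what drives the whole argument.
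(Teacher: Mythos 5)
Your proof is correct and follows essentially the same strategy as the paper's: induction on $j$, decomposing $\gamma_j^{(r)}$ by the position of a single positive index adjacent to a block of $\alpha_0$'s, then splitting into one case disposed of without any condition on $h$ and finitely many remaining cases handled by continuity. The only difference is cosmetic: the paper isolates the leftmost positive index, writing $\gamma^{(r)}_{j+1}=\sum_{l_1,l_2}\alpha_0^{l_1}\xcirc\alpha_{l_2}\xcirc\gamma^{(r-l_1-1)}_{j+1-l_2}$ and splitting on whether $l_1\ge n_0$, whereas you isolate the rightmost one and split on the length of the prefix.
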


\begin{proof} We proceed by induction on $j$. First we assume $j=0$. By hypothesis there
exists $n_0\ge 0$, such that $\gamma^{(n)}_0(A) = \alpha_0^n(A)\subseteq A_i$ whenever $n\ge
n_0$. Since $\alpha_0$ is continuous, there exists $h_0\ge 0$ such that $\gamma^{(n)}_0(A_h) =
\alpha_0^n(A_h)\subseteq A_i$, for each $n<n_0$ and $h>h_0$. Clearly we can take $r_0 =
n_0+h_0$. Assume the lemma is valid for $j$ and write
$$
\gamma^{(r)}_{j+1} = \sum_{l_1 = 0}^{r-1}\sum_{l_2 = 1}^{j+1} \alpha_0^{l_1}\xcirc \alpha_{l_2}
\xcirc \gamma^{(r-l_1-1)}_{j+1-l_2}.
$$
Since $\alpha_0^n(A)\subseteq A_i$ for all $n\ge n_0$ in order to complete the inductive step
it suffices to show that for all $l_1 < n_0$ and $l_2\le j+1$, there exists $r_0\ge 0$ such
that
$$
\alpha_0^{l_1}\xcirc \alpha_{l_2}\xcirc \gamma^{(r-l_1-1)}_{j+1-l_2}(A_h)\subseteq A_i \quad
\text{whenever $r+h\ge r_0$,}
$$
which follows immediately from the continuity of $\alpha_0^{l_1}\xcirc \alpha_{l_2}$ and the
inductive hypothesis.
\end{proof}

\begin{remark}\label{remark 5.2} If $s\colon k[[Y]]\hat{\ot} A\to A\hat{\ot} k[[Y]]$ is a
twisting map, then $\ker(\alpha_0)$ is a closed subalgebra of $A$. Moreover,

\begin{itemize}

\smallskip

\item If $\alpha_0=0$, then $\alpha_1$ is a endomorphism of algebras.

\smallskip

\item Let $\nu>1$. If $\alpha_j=0$ for $1<j<\nu$, then
$$
\alpha_{\nu}(ab)= \alpha_1(a)\alpha_{\nu}(b) + \alpha_{\nu}(a)\alpha_1^{\nu}(b).
$$

\end{itemize}

\end{remark}

\begin{theorem}\label{teorema 5.3} Let $A$ be a filtrated complete algebra and $s\colon k[[Y]]
\hat{\ot} A\to A\hat{\ot} k[[Y]]$ a twisting map. The equation
$$
s(Y\hat{\ot} a) = \sum_{j=0}^{\infty} \alpha_j(a)\hat{\ot} Y^j,\label{eq9}
$$
defines a family of maps $\alpha_j\colon A\to A$, which satisfies:

\begin{enumerate}

\smallskip

\item The $\alpha_j$'s are continuous maps.

\smallskip

\item For each $i\ge 0$ there exists $n_0$ such that $\alpha_0^n(A)\subseteq A_i$ for all
$n\ge n_0$.

\smallskip

\item $\alpha_j(1) = \delta_{j1}$, where $\delta_{j1}$ denotes the symbol of Kronecker.

\smallskip

\item For all $j\ge 0$ and all $a,b\in A$,
$$
\quad\qquad\alpha_j(ab) = \sum_{r=0}^{\infty}\alpha_r(a)\gamma_j^{(r)}(b)\quad\text{(this
formula makes sense by Lemma~\ref{lemma 5.1}).}
$$

\smallskip

\end{enumerate}
Moreover,
\begin{equation}
s\left(\sum_{r=0}^{\infty} Y^r\hat{\ot} a_r\right )= \sum_{j=0}^{\infty}\left(
\sum_{r=0}^{\infty}\gamma_j^{(r)}(a_r) \right)\hat{\ot} Y^j.\label{eq10}
\end{equation}
Conversely, given maps $\alpha_j\colon A\to A$ ($j\ge 0$) satisfying (1)--(4), the
formula~\eqref{eq10} defines a twisting map.

\end{theorem}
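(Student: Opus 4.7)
My plan is to follow the template of Theorem~\ref{teorema 2.1}, with the topology supplying the genuinely new content. For the forward direction, I start by observing that since $k[[Y]]\hat{\ot}A$ is the completion of $k[Y]\ot A$ and $s$ is continuous, $s(Y\hat{\ot}a)$ is a convergent series in $A\hat{\ot}k[[Y]]\cong A[[Y]]$, so it has the form $\sum_j\alpha_j(a)\hat{\ot}Y^j$ and this defines the maps $\alpha_j\colon A\to A$ as coefficient extractions. Continuity of each $\alpha_j$ (property~(1)) follows because $\alpha_j$ is the composite of $a\mapsto Y\hat{\ot}a$, $s$, and extraction of the $Y^j$-coefficient, all of which are continuous. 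Property~(3) is a direct consequence of $s\xcirc(\eta_{k[[Y]]}\hat{\ot}A)=A\hat{\ot}\eta_{k[[Y]]}$, and the formula $s(Y^r\hat{\ot}a)=\sum_j\gamma_j^{(r)}(a)\hat{\ot}Y^j$ is proved by induction on $r$ using compatibility of $s$ with $\mu_{k[[Y]]}$, exactly as in Theorem~\ref{teorema 2.1}. Property~(4) then emerges from compatibility of $s$ with $\mu_A$ applied to $Y\hat{\ot}ab$, using the explicit formula for $s$ on $Y^r\hat{\ot}b$.

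For the new property~(2), I use continuity of $s$ in a uniform way: given $i\ge 0$, there exists $n_0$ such that $s\bigl((k[[Y]]\hat{\ot}A)_{n_0}\bigr)\subseteq(A\hat{\ot}k[[Y]])_i$. For any $a\in A$ and $n\ge n_0$, we have $Y^n\hat{\ot}a\in(k[[Y]]\hat{\ot}A)_{n_0}$, hence $s(Y^n\hat{\ot}a)=\sum_j\gamma_j^{(n)}(a)\hat{\ot}Y^j\in(A\hat{\ot}k[[Y]])_i$. Extracting the constant term in $Y$ yields $\gamma_0^{(n)}(a)=\alpha_0^n(a)\in A_i$, which holds uniformly in $a\in A$ and gives property~(2). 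The global formula~(eq10) is then obtained from the explicit formula on $Y^r\hat{\ot}a$ by continuity of $s$ and completeness of the target.

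For the converse, given $(\alpha_j)_{j\ge 0}$ satisfying (1)--(4), I define $s$ by formula~(eq10). The subtle point is to show the formula makes sense: the inner sum $\sum_r\gamma_j^{(r)}(a_r)$ must converge in $A$, and the outer double sum must assemble into an element of $A\hat{\ot}k[[Y]]$. Both follow from Lemma~\ref{lemma 5.1}: applied with $h=0$, it provides, for each $i,j$, an index $r_0$ beyond which $\gamma_j^{(r)}(A)\subseteq A_i$, so the inner sum converges; the outer sum converges because $Y^j$ lies in $(k[[Y]])_j$, so all but finitely many terms lie in $(A\hat{\ot}k[[Y]])_i$. Continuity of $s$ follows by the same estimate combined with continuity of each $\alpha_j$. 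The twisting-map axioms themselves reduce to the formal verifications of Theorem~\ref{teorema 2.1}: compatibility with the unit and multiplication of $k[[Y]]$ is encoded in the inductive formula for $s(Y^r\hat{\ot}a)$, while compatibility with $\eta_A$ and $\mu_A$ translate exactly into~(3) and~(4). The main obstacle throughout will be rigorously controlling the iterated infinite summations and the interchange of summation orders in the verification of property~(4) for the converse; Lemma~\ref{lemma 5.1} is the decisive tool that legitimizes all such manipulations in the complete filtered setting.
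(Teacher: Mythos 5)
Your proposal is correct and follows essentially the same route as the paper: the paper's own proof is very terse (items (1) and (2) "follow from the continuity of $s$", items (3)--(4) and formula \eqref{eq10} are handled "as in Theorem~\ref{teorema 2.1}", and the converse is reduced to Lemma~\ref{lemma 5.1} for well-definedness and continuity, with the twisting-map axioms left to the reader). You simply fill in the details the paper omits — in particular your explicit derivation of item~(2) from uniform continuity of $s$ on $(k[[Y]]\hat{\ot}A)_{n_0}$ and your use of Lemma~\ref{lemma 5.1} with $h=0$ for convergence of the inner sums are exactly the intended arguments.
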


\begin{proof} Items~(1) and (2) follows easily from the continuity of $s$, and items~(3)
and~(4) can be checked as in the proof of Theorem~\ref{teorema 2.1}. To check~\eqref{eq10} we
can assume that only one $a_r\ne 0$. In this case we can proceed again as in the proof of
Theorem~\ref{teorema 2.1}. Conversely, assume we have a family of continuous maps
$(\alpha_j)_{j\ge 0}$ satisfying (1), (2), (3) and (4) and define $s$ by the
formula~\eqref{eq10}. By Lemma~\ref{lemma 5.1} this map is well defined and it is continuous.
We leave the task to prove that $s$ is a twisting map to the reader.
\end{proof}

\begin{theorem}\label{teorema 5.4} Let $\alpha\colon A \to A$ be an automorphism of filtrated
completed algebras. For each $i\ge 1$, let $\beta_i\colon A\to A$ be a continuous
$(\alpha,\alpha^{i+1})$-derivation. If
$$
\alpha^r(\beta_i(a))\beta_{i'}(b)=0 \quad\text{for all $r\in\bZ$ and $a,b\in A$ whenever
$i+i'\ge 3$,}
$$
then, the formula
$$
s(Y\ot a)=\sum_{j=0}^{\infty} \alpha_j(a)\ot Y^j,
$$
where the maps $\alpha_j \colon A\to A$ ($j\ge 0$) are constructed as in Lemma~\ref{lema 2.5},
defines a twisting map $s\colon k[[Y]]\hat{\ot} A\to A\hat{\ot} k[[Y]]$.
\end{theorem}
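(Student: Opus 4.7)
The plan is to apply Theorem~\ref{teorema 5.3} to the family of maps $(\alpha_j)_{j\ge 0}$ defined by Lemma~\ref{lema 2.5}. Recall that $\alpha_0 = 0$, $\alpha_1 = \alpha$, and for $j\ge 2$ each $\alpha_j$ is a \emph{finite} sum of compositions of $\alpha$, $\alpha^{-1}$ and the $\beta_i$'s. I therefore need to verify the four conditions (1)--(4) of that theorem.

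Conditions (1)--(3) are essentially immediate. Continuity (condition~(1)) follows because each $\alpha_j$ is a finite sum of finite compositions of continuous maps. Condition~(2) holds trivially: since $\alpha_0=0$, we have $\alpha_0^n(A) = 0 \subseteq A_i$ for every $n\ge 1$ and every $i$. For condition~(3), note that $\alpha_0(1)=0$ and $\alpha_1(1)=\alpha(1)=1$, and for $j\ge 2$ each term $\beta_{(i_1,\dots,i_l)}(1)$ vanishes because the rightmost factor is the derivation $\beta_{i_l}$, and a derivation kills $1$.

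The substance is condition~(4), the multiplicativity identity
$$
\alpha_j(ab)=\sum_{r=0}^{\infty}\alpha_r(a)\gamma_j^{(r)}(b).
$$
A key observation is that because $\alpha_0=0$, the map $\gamma_j^{(r)}$ vanishes as soon as $r>j$ (any term with some $n_i=0$ is killed by the $\alpha_0$ factor), so for each fixed $j$ this is in fact a finite sum running from $r=0$ to $r=j$. Consequently (4) is a purely algebraic identity, independent of the topology, and it can be verified by the same computation used to establish condition~(3) of Theorem~\ref{teorema 2.1} in the proof of Theorem~\ref{teorema 2.7}. That computation rests only on Lemmas~\ref{lema 2.5} and~\ref{lema 2.6}, and hence only on the vanishing hypothesis $\alpha^r(\beta_i(a))\beta_{i'}(b)=0$ when $i+i'\ge 3$. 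The finiteness hypothesis~(2) of Theorem~\ref{teorema 2.7} was used \emph{only} to ensure condition~(1) of Theorem~\ref{teorema 2.1}, and plays no role here — which is exactly the gain of working in the complete filtrated setting.

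The main obstacle is therefore bookkeeping rather than mathematical content: one must check that the algebraic identity transfers verbatim from the polynomial setting, and that the convergence issue built into Theorem~\ref{teorema 5.3} is automatic thanks to $\alpha_0=0$ (both via Lemma~\ref{lemma 5.1} and by the direct observation above that the relevant sums are finite).
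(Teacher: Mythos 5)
Your proposal is correct and follows exactly the route the paper intends: the paper's entire proof of Theorem~\ref{teorema 5.4} is the single line ``Mimic the proof of Theorem~\ref{teorema 2.7},'' and your verification of conditions (1)--(4) of Theorem~\ref{teorema 5.3} --- in particular the observation that the multiplicativity identity is the same algebraic computation from Lemmas~\ref{lema 2.5} and~\ref{lema 2.6}, while the finiteness hypothesis of Theorem~\ref{teorema 2.7} is no longer needed because $\alpha_0=0$ makes the convergence conditions trivial --- is precisely the content of that mimicry, spelled out more carefully than the paper does.
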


\begin{proof} Mimic the proof of Theorem~\ref{teorema 2.7}.
\end{proof}

\begin{lemma}\label{lema 5.5} Let $\sum_{ij} a_{ij} X^i\hat{\ot} Y^j\in k[[X]]\hat{\ot}
k[[Y]]$. Assume that $a_{i0} =0$ for all $i$. The equality
$$
\alpha_1(X)=  \sum_{i\ge 0} a_{i1}X^i
$$
defines a continuous algebra map $\alpha_1\colon k[[X]]\to k[[X]]$ if and only if the
independent term $a_{01}$ of $\alpha_1(X)$ is nilpotent. Moreover, in this case, there is a
unique family continous maps $\bigl(\alpha_j\colon k[[X]]\to k[[X]]\bigr)_{j\ge 2}$, that
satisfy
\begin{align*}
&\alpha_{j+1}(1)=\delta_{j+1,1},\\
&\alpha_{j+1}(X) = \sum_{i\ge 0} a_{i,j+1}X^i,\\
&\alpha_{j+1}(X^{n+1}) = \sum_{r=1}^{j+1} \alpha_r(X^n)\gamma_{j+1}^{(r)}(X),
\end{align*}
for all $j\ge 1$.
\end{lemma}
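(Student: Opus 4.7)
I would split the proof into two parts. Throughout, recall that $k[[X]]$ carries the $X$-adic filtration $k[[X]]_m=X^m k[[X]]$, that polynomials are dense in it, and that a $k$-linear map $\varphi\colon k[[X]]\to k[[X]]$ is continuous iff $\varphi(X^n)\to 0$ in the $X$-adic topology.

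\medskip

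\noindent\emph{First part.} A continuous algebra map $\alpha_1$ is uniquely determined by $\alpha_1(X)=P$, and the extension $\alpha_1(\sum c_n X^n)=\sum c_n P^n$ is defined iff $P^n\to 0$ in the $X$-adic topology. The constant term of $P^n$ is $a_{01}^n$, so the nilpotency of $a_{01}$ is necessary. Conversely, if $a_{01}^N=0$, write $P=a_{01}+XS$ with $S\in k[[X]]$; expanding $P^n$ by the binomial theorem, the summand indexed by $j$ contains the factor $a_{01}^{n-j}$ and hence vanishes whenever $n-j\ge N$, so only $j\ge n-N+1$ contribute and $P^n\in X^{n-N+1}k[[X]]$. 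Thus $P^n\to 0$.

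\medskip

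\noindent\emph{Second part.} The hypothesis $a_{i0}=0$ corresponds to setting $\alpha_0=0$, so in $\gamma_{j+1}^{(r)}$ only tuples with all $n_k\ge 1$ survive; in particular $r$ can only range from $1$ to $j+1$, and $\gamma_{j+1}^{(j+1)}(X)=\alpha_1^{j+1}(X)$. I would proceed by induction on $j\ge 1$, the base case $j=1$ being handled above. Uniqueness is clear: any continuous $\alpha_{j+1}$ is determined by its values on monomials, and the three relations in the statement determine $\alpha_{j+1}(X^n)$ by induction on $n$, since in $\gamma_{j+1}^{(r)}(X)$ the map $\alpha_{j+1}$ itself can occur only in the $r=1$ term, which is the prescribed $\alpha_{j+1}(X)$. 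For existence, define $\alpha_{j+1}$ on monomials by the recurrence; once continuity is established, $\alpha_{j+1}$ extends uniquely to $k[[X]]$ by setting $\alpha_{j+1}(\sum c_n X^n)=\sum c_n\alpha_{j+1}(X^n)$.

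\medskip

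\noindent\emph{Main obstacle.} The delicate point is the convergence $\alpha_{j+1}(X^n)\to 0$. Setting $u_n=\alpha_{j+1}(X^n)$, $V=\alpha_1^{j+1}(X)$ and $W_n=\sum_{r=1}^{j}\alpha_r(X^n)\gamma_{j+1}^{(r)}(X)$, the recurrence becomes $u_{n+1}=W_n+u_n V$, which iterates to
$$
u_{n+1}=\sum_{i=1}^{n} W_i V^{n-i} + u_1 V^n.
$$
By the inductive continuity of $\alpha_1,\dots,\alpha_j$ one gets $W_n\to 0$, and by the continuity of $\alpha_1^{j+1}$ one gets $V^n=\alpha_1^{j+1}(X^n)\to 0$. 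The difficulty is that the iterated sum contains roughly $n$ summands, so termwise convergence is not sufficient. To get around this, given a target level $m$, I would fix $N$ with $V^N\in X^m k[[X]]$ and $n_1$ with $W_i\in X^m k[[X]]$ for $i\ge n_1$, and then for $n\ge n_1+N$ split the sum at $i=n-N$: for $i\le n-N$ the factor $V^{n-i}$ lies in $X^m k[[X]]$, while for $i>n-N$ we have $i\ge n-N+1\ge n_1$, so $W_i\in X^m k[[X]]$. Together with $u_1V^n\in X^m k[[X]]$ for $n\ge N$, this yields $u_{n+1}\in X^m k[[X]]$, completing the induction.
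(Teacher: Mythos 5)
Your proof is correct, and it reaches the continuity of the maps $\alpha_{j+1}$ by a genuinely different route than the paper. The paper's proof of the second assertion establishes, by a double induction on $j$ and $n$, the explicit containment
$$
\alpha_j(X^n)\in\sum_{r=0}^{n-j+1}a_{01}^{\,n-j-r+1}X^r k[[X]]\qquad(n\ge j),
$$
so that once $a_{01}^N=0$ one reads off $\alpha_j(X^n)\in X^{\,n-j+2-N}k[[X]]$ and convergence to $0$ is immediate; the nilpotency of $a_{01}$ is thus tracked explicitly at every stage. You instead solve the first-order recurrence $u_{n+1}=W_n+u_nV$ in closed form and control the $n$-term sum $\sum_i W_iV^{n-i}$ by the two-block splitting at $i=n-N$, invoking nilpotency only once, through the continuity of $\alpha_1$ (hence of $\alpha_1^{j+1}$, which gives $V^n=\alpha_1^{j+1}(X^n)\to 0$). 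Your argument is softer --- it yields no explicit rate --- but it isolates more cleanly why the statement holds: the homogeneous part of the recursion is governed by the already-continuous algebra map $\alpha_1^{j+1}$, and the rest is an inhomogeneity tending to $0$; the splitting correctly overcomes the fact that termwise convergence alone does not suffice for a sum of length $n$. Both arguments share the same skeleton (induction on $j$, recursion on $n$, continuity checked via $\alpha_{j+1}(X^n)\to 0$ on the dense subspace of polynomials), and your treatment of the first assertion (necessity of nilpotency from the constant term of $P^n$, sufficiency from $P^n\in X^{n-N+1}k[[X]]$) is exactly what the paper dismisses as immediate.
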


\begin{proof} The first assertion it is immediate. In order to prove the second one it will be
sufficient to show that
$$
\alpha_j(X^n)\in \sum_{r=0}^{n-j+1} a_{01}^{n-j-r+1} X^r k[[X]]\qquad \text{if $n\ge j$.}
$$
We will prove this fact by induction on $j$. For $j=1$ this is clear. Assume it is true for $j$
and for $\alpha_{j+1}(X^h)$ with $j\le h\le n$. Then, by the inductive hypothesis and the facts
that,
\begin{align*}
& \alpha_{j+1}(X^{n+1}) = \sum_{s=1}^{j+1} \alpha_s(X^n)\gamma_{j+1}^{(s)}(X)\\
\intertext{and}
&\gamma_{j+1}^{(j+1)}(X) = \alpha_1^{j+1}(X)\in a_{01}k[[X]]+Xk[[X]],
\end{align*}
we get that for all $n\ge j$,
$$
\alpha_{j+1}(X^{n+1}) \in \sum_{s=1}^j \sum_{r=0}^{n-s+1}\! a_{01}^{n-s-r+1} X^r k[[X]] +
a_{01}^{n-j-r} X^r k[[X]](a_{01}k[[X]]+Xk[[X]]).
$$
Using this the proof can be easily finished.
\end{proof}

\begin{theorem}\label{teorema 5.6} Let $\sum_{ij} a_{ij} X^i\hat{\ot} Y^j\in k[[X]]\hat{\ot}
k[[Y]]$. If $a_{i0} =0$ for all $i$ and $a_{01}$ is nilpotent, then there is a unique twisting
map $s\colon k[[Y]]\hat{\ot} k[[X]]\to k[[X]]\hat{\ot} k[[Y]]$ satisfying
$$
s(Y\hat{\ot} X) = \sum_{ij} a_{ij} X^i\hat{\ot} Y^j
$$
Moreover
$$
s(Y\hat{\ot} P) = \sum_{j\ge 1}\alpha_j(P)\hat{\ot} Y^j,
$$
where $\alpha_j\colon k[[X]]\to k[[X]]$ ($j\ge 1$) are the maps introduced in Lemma~\ref{lema
5.5}.
\end{theorem}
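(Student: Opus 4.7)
My plan is to apply Theorem~\ref{teorema 5.3}: set $\alpha_0 := 0$ and let $\alpha_j\colon k[[X]]\to k[[X]]$ for $j \ge 1$ be the continuous maps furnished by Lemma~\ref{lema 5.5}. Once the family $(\alpha_j)_{j\ge 0}$ is shown to satisfy conditions (1)--(4) of Theorem~\ref{teorema 5.3}, the desired twisting map $s$ is obtained from formula~\eqref{eq10}, and the stated expression $s(Y\hat{\ot} P) = \sum_{j \ge 1}\alpha_j(P)\hat{\ot} Y^j$ follows because $\alpha_0 = 0$ kills the $j = 0$ summand. Conditions~(1), (2) and (3) come essentially for free: continuity of each $\alpha_j$ is part of Lemma~\ref{lema 5.5}, condition~(2) is trivial because $\alpha_0 = 0$, and $\alpha_j(1) = \delta_{j1}$ is built into the normalisation of Lemma~\ref{lema 5.5}.

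The core of the proof is therefore condition~(4), the twisted Leibniz identity $\alpha_j(ab) = \sum_r \alpha_r(a)\gamma_j^{(r)}(b)$ for $a, b \in k[[X]]$ and $j \ge 0$. By Lemma~\ref{lemma 5.1} and the continuity of each $\alpha_j$, both sides are continuous and $k$-bilinear in $(a, b)$, so it suffices to prove the identity when $a = X^n$ and $b = X^m$. I would argue by induction on $m$ with $j$ and $n$ arbitrary. The case $m = 0$ reduces to $\gamma_j^{(r)}(1) = \delta_{jr}$, while the case $m = 1$ is exactly the recursion defining $\alpha_{j+1}(X^{n+1})$ in Lemma~\ref{lema 5.5} for $j \ge 2$, supplemented by the algebra-map property of $\alpha_1$ for $j = 1$ and the vanishing $\alpha_0 = 0$ for $j = 0$.

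The step $m \to m + 1$ is the main technical obstacle. Writing $X^n X^{m+1} = (X^n X^m)\cdot X$, applying the $m = 1$ case and then the inductive hypothesis to each $\alpha_r(X^n X^m)$, one reduces matters to the analogous compatibility $\gamma_j^{(s)}(X^m\cdot X) = \sum_r \gamma_r^{(s)}(X^m)\gamma_j^{(r)}(X)$ for the iterated maps. The apparent circularity between the multiplicativity of $\alpha_j$ and that of its iterates $\gamma_j^{(s)}$ is broken by a simultaneous strong induction: one proves the joint statement ``$\gamma_j^{(s)}(X^nX^m) = \sum_r \gamma_r^{(s)}(X^n)\gamma_j^{(r)}(X^m)$ for all $j, n, m, s \ge 0$'' by a primary induction on $m$, with the crucial $m = 1$ base handled by a secondary induction on $s$ that uses the recursion $\gamma_j^{(s+1)} = \sum_{n_1}\alpha_{n_1}\xcirc\gamma_{j-n_1}^{(s)}$ together with the $s = 1$ base (Lemma~\ref{lema 5.5}). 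The work here is bookkeeping rather than conceptual, in the same spirit as what is left to the reader in the proofs of Theorems~\ref{teorema 2.1} and~\ref{teorema 5.3}.

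Uniqueness is immediate from Theorem~\ref{teorema 5.3} combined with Lemma~\ref{lema 5.5}: any twisting map $s$ with the prescribed value $s(Y\hat{\ot} X) = \sum_{ij} a_{ij}X^i\hat{\ot} Y^j$ yields, by Theorem~\ref{teorema 5.3}, a family $(\alpha_j)$ whose values on each $X^n$ are forced by the recursion of Lemma~\ref{lema 5.5}, and continuity then determines each $\alpha_j$ on the whole of $k[[X]]$, so $s$ itself is uniquely determined by formula~\eqref{eq10}.
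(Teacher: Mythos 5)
Your overall strategy --- reduce condition~(4) of Theorem~\ref{teorema 5.3} to monomials, feed in the recursion of Lemma~\ref{lema 5.5}, and prove alongside it a compatibility identity for the iterates $\gamma_j^{(s)}$ --- is exactly the paper's, and your treatment of conditions~(1)--(3) and of uniqueness is fine. The gap is in the induction scheme you propose for the joint statement. You put the primary induction on $m$, the exponent of the \emph{second} factor, and handle the $m=1$ base by a secondary induction on $s$ via $\gamma_j^{(s+1)} = \sum_{q\ge 1}\alpha_q\circ\gamma_{j-q}^{(s)}$. But the inductive step of that secondary induction produces terms $\alpha_q\bigl(\gamma_r^{(s)}(X^n)\,\gamma_{j-q}^{(r)}(X)\bigr)$, and to expand these you must apply the twisted Leibniz rule for $\alpha_q$ to a product whose second factor $\gamma_{j-q}^{(r)}(X)$ is an arbitrary power series; by linearity and continuity this amounts to the $s=1$ slice of your joint statement for \emph{all} exponents $m$ of the second factor simultaneously. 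That is precisely what the primary induction on $m$ is supposed to deliver and is not available inside its own $m=1$ base: the scheme is circular, and no further inner induction repairs it, because any use of the Leibniz rule for some $\alpha_q$ against a general power series requires the statement for unboundedly large $m$.

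The paper breaks the circle by putting the primary induction on the index $j$ of $\alpha_j$ instead. Since $\alpha_0=0$, every index in the recursion $\gamma_j^{(l)} = \sum_q\alpha_q\circ\gamma^{(l-1)}_{j-q}$ satisfies $q\ge 1$, hence $q\le j-l+1<j$ whenever $l\ge 2$; so when one proves $\gamma_j^{(l)}(X^{n+1})=\sum_{r=l}^j\gamma_r^{(l)}(X^n)\gamma_j^{(r)}(X)$ by an inner induction on $l$ (with a further inner induction on $n$ for $\alpha_j$ itself), the full multiplicativity of the maps $\alpha_q$ with $q<j$ --- on all of $k[[X]]$, again by linearity and continuity --- is already guaranteed by the outer hypothesis. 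Reorganizing your argument this way (outer on $j$, inner on $n$, innermost on $l$) makes it correct and recovers essentially the paper's proof.
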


\begin{proof} The uniqueness and the last assertion are immediate. Let us prove the existence.
Let $\alpha_0 = 0$.  By Lemma~\ref{lema 5.5} we know that the maps $\alpha_j$ are well defined
and continuous. Moreover, it is evident that items~(1), (2) and (3) of Theorem~\ref{teorema
5.3} are satisfied. So we only must prove item~(4), which (by linearity and continuity) reduce
to check that
$$
\alpha_j(X^mX^n) =  \sum_{l=1}^j \alpha_l(X^m)\gamma_j^{(l)}(X^n)\quad\text{for all $m,n\ge
0$.}
$$
For $j=1$ this follows from Remark~\ref{remark 5.2}. Assume that the result is true for
$\alpha_q$ with $q<j$ and for $\alpha_j(X^mX^b)$ with $b\le n$.  By the recursive definition of
$\alpha_j$ and the induction hypothesis,
\begin{align*}
\alpha_j(X^mX^{n+1}) &= \sum_{r=1}^j\alpha_r(X^mX^n)\gamma_j^{(r)}(X)\\
&= \sum_{r=1}^j\sum_{l=1}^r \alpha_l(X^m)\gamma_r^{(l)}(X^n)\gamma_j^{(r)}(X)\\
&= \sum_{l=1}^j\sum_{r=l}^j \alpha_l(X^m)\gamma_r^{(l)}(X^n)\gamma_j^{(r)}(X).
\end{align*}
So it is enough to show that
$$
\gamma_j^{(l)}(X^{n+1}) = \sum_{r=l}^j\gamma_r^{(l)}(X^n)\gamma_j^{(r)}(X)\quad\text{for }
l=1,\dots,j.
$$
We prove this formula by induction on $l$. When $l=1$ this is true by the recursive definition
of $\alpha_j(X^{n+1}) = \gamma_j^{(1)}(X^{n+1})$. Suppose $l>1$. Then, we have
\allowdisplaybreaks
\begin{align*}
\gamma_j^{(l)}(X^{n+1}) & = \sum_{q_1=1}^{j-l+1}\sum_{|q_2,\dots,q_l|=j-q_1}
\alpha_{q_1}\bigl(\alpha_{q_2\dots q_l}(X^{n+1})\bigr)\\
&= \sum_{q=1}^{j-l+1}  \alpha_q\bigl(\gamma^{(l-1)}_{j-q}(X^{n+1})\bigr)\\
&=\sum_{q=1}^{j-l+1}\sum_{s=l-1}^{j-q}\alpha_q\bigl(\gamma_s^{(l-1)}(X^n)
\gamma_{j-q}^{(s)}(X)\bigr)\\
& =\sum_{s=l-1}^{j-1}\sum_{q=1}^{j-s}\sum_{h=1}^q \alpha_h\bigl(\gamma_s^{(l-1)}(X^n)\bigr)
\gamma_q^{(h)}\bigl(\gamma_{j-q}^{(s)}(X)\bigr)\\
& =\sum_{s=l-1}^{j-1}\sum_{h=1}^{j-s}\sum_{q=h}^{j-s} \alpha_h\bigl(\gamma_s^{(l-1)}(X^n)\bigr)
\gamma_q^{(h)}\bigl(\gamma_{j-q}^{(s)}(X)\bigr)\\
& = \sum_{r=l}^j\gamma_r^{(l)}(X^n)\gamma_j^{(r)}(X),
\end{align*}
as desired.
\end{proof}

\begin{remark}\label{remark 5.7} Let $\sum_{ij} a_{ij} X^i\hat{\ot} Y^j\in k[[X]]\hat{\ot}
k[[Y]]$. Suppose that $a_{0j} =0$ for all $j$ and $a_{10}$ is nilpotent. By
Theorem~\ref{teorema 5.6} we know that there exists a unique twisting map $s\colon
k[[X]]\hat{\ot} k[[Y]]\to k[[Y]]\hat{\ot} k[[X]]$ satisfying
$$
s(X\hat{\ot} Y) = \sum_{ij} a_{ij} Y^j\hat{\ot} X^i.
$$
But then the map  $\tau\xcirc s\xcirc \tau$, where  $\tau \colon k[[Y]]\hat{\ot} k[[X]]\to
k[[X]]\hat{\ot} k[[Y]]$ is the flip, is a twisting map taking $Y\ot X$ to $\sum_{ij} a_{ij}
X^i\hat{\ot} Y^j$.
\end{remark}

\appendix

\section{}
\setcounter{equation}{0}
This appendix is devoted to prove Theorem~\ref{teorema 3.4}. So, we assume that $k$ is a
commutative domain. Since $s\colon k[Y]\ot k[X]\to k[X]\ot k[Y]$ is a twisting map there is a
family of maps $(\alpha_n\colon k[X]\to k[X])_{n\ge 0}$ satisfying the conditions established
in Theorem~\ref{teorema 2.1}. An easy argument (see the proof of Theorem~\ref{teorema 3.4} at
the end of this appendix) shows that, under the hypothesis of item~(1) of Theorem~\ref{teorema
3.4}, $\alpha_0 = 0$, $\alpha_1=ev_0$ and there exists $\nu\ge 2$ such that $\alpha_\nu\ne 0$
and $\alpha_k=0$ for $1<k<\nu$. We do not will use these facts until Lemma~\ref{alfasygammas}.
\begin{lemma}\label{nuevo} Let $j_0,r_0\ge 0$. If $\alpha_l(X^{j_0}) = 0$ for all $l\le r_0$,
then $\alpha_l(X^j) = 0$ for all $l\le r_0$ and $j\ge j_0$.
\end{lemma}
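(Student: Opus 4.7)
The plan is to proceed by strong induction on $j \ge j_0$, with the base case $j = j_0$ given by hypothesis. For the inductive step I will assume that $\alpha_l(X^i) = 0$ for every $l \le r_0$ and every $j_0 \le i \le j$, and derive the same statement for $i = j+1$. The main tool is the factorization $X^{j+1} = X^{j+1-j_0}\cdot X^{j_0}$, which is well defined because $j \ge j_0$ forces $j+1-j_0 \ge 1$; applying item~(3) of Theorem~\ref{teorema 2.1} yields
$$
\alpha_l(X^{j+1}) = \sum_{r \ge 0}\alpha_r(X^{j+1-j_0})\,\gamma_l^{(r)}(X^{j_0}).
$$

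The key computation is that for any $l \le r_0$ and any $r \ge 1$, every summand of
$$
\gamma_l^{(r)}(X^{j_0}) = \sum_{|n_1,\dots,n_r|=l}\alpha_{n_1}\xcirc\cdots\xcirc\alpha_{n_r}(X^{j_0})
$$
has its innermost application $\alpha_{n_r}(X^{j_0})$ with $n_r \le l \le r_0$; the standing hypothesis kills this, so the whole summand (and hence all of $\gamma_l^{(r)}(X^{j_0})$) vanishes. Only the $r = 0$ contribution survives, giving $\alpha_l(X^{j+1}) = \delta_{l0}\,\alpha_0(X^{j+1-j_0})\,X^{j_0}$. For $l \in \{1,\dots,r_0\}$ this already reads $0$, settling those cases. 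For $l = 0$, the identity $\alpha_0(X^{j+1}) = \alpha_0(X^{j+1-j_0})X^{j_0}$ combined with $X^{j_0} \ne 0$ in the domain $k[X]$ reduces matters to showing $\alpha_0(X^{j+1-j_0}) = 0$, which the inductive hypothesis supplies as soon as $j+1-j_0 \in [j_0, j]$, i.e.\ for $j \ge 2j_0 - 1$.

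The hard part will be the remaining small range $j_0 \le j \le 2j_0 - 2$ for the $l = 0$ case. For these values I plan to complement the expansion above with the one coming from $X^{j+1} = X\cdot X^j$: by the same innermost-vanishing mechanism, now using $\alpha_0(X^j) = 0$ from the inductive hypothesis at $i = j$ to kill the $r \ge 1$ contributions, one obtains $\alpha_0(X^{j+1}) = \alpha_0(X)X^j$. Equating the two formulas for $\alpha_0(X^{j+1})$ inside the domain $k[X]$ should force $\alpha_0(X) = 0$ in these exceptional cases, after which $\alpha_0(X^{j+1}) = 0$ follows at once and the induction closes.
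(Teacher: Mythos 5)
Your treatment of the cases $1\le l\le r_0$ is correct and is genuinely different from the paper's: you put $X^{j_0}$ in the \emph{inner} slot of the product, so that every term $\gamma_l^{(r)}(X^{j_0})$ with $r\ge 1$ dies because its innermost factor $\alpha_{n_r}(X^{j_0})$ has $n_r\le l\le r_0$, while the $r=0$ term carries $\gamma_l^{(0)}=\delta_{0l}\ide$. Note that for $l\ge 1$ this already gives $\alpha_l(X^j)=0$ for every $j>j_0$ directly, with no induction at all. The paper factors the other way, writing $\alpha_l(X^j)=\sum_{h=0}^{l}\alpha_h(X^{j_0})\gamma_l^{(h)}(X^{j-j_0})$, so that each surviving term carries the vanishing factor $\alpha_h(X^{j_0})$ with $h\le l\le r_0$; the truncation of that sum at $h=l$ is exactly what the standing reduction $\alpha_0=0$ of the appendix buys (if $n_i=0$ were allowed, $\gamma_l^{(h)}$ need not vanish for $h>l$).

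The genuine gap is your case $l=0$, and it cannot be closed along the lines you propose. Comparing your two factorizations only yields $\alpha_0(X^{j+1-j_0})X^{j_0}=\alpha_0(X)X^{j}$, i.e.\ $\alpha_0(X^{j+1-j_0})=\alpha_0(X)X^{j-j_0}$ after cancelling $X^{j_0}$ in the domain $k[X]$; this is perfectly compatible with $\alpha_0(X)\ne 0$ and forces nothing. In fact the $l=0$ assertion is \emph{false} if one does not assume $\alpha_0=0$: over $k=\mathbb{F}_p$ (a commutative domain) take the twisting map of Example~\ref{ejemplo 2.3} with $\alpha=\ide$ and $\delta=d/dX$, so that $\alpha_0=d/dX$, $\alpha_1=\ide$ and $\alpha_j=0$ for $j\ge 2$. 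Then $\alpha_0(X^p)=pX^{p-1}=0$, so the hypothesis of the lemma holds with $j_0=p$ and $r_0=0$, yet $\alpha_0(X^{p+1})=X^p\ne 0$. Hence no manipulation of the structure equations of Theorem~\ref{teorema 2.1} alone can complete your induction. The lemma has to be read under the appendix's standing reduction to $\alpha_0=0$ (which is established independently, at the start of the proof of Theorem~\ref{teorema 3.4}, before the lemma is ever applied); once that is granted the case $l=0$ is trivial and your argument for $l\ge 1$ finishes the proof.
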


\begin{proof} By item~(3) of Theorem~\ref{teorema 2.1},
$$
\alpha_l(X^j)=\sum_{h=0}^l \alpha_h (X^{j_0})\gamma^{(h)}_l(X^{j-j_0}) = 0,
$$
for all $l\le r_0$ and $j>j_0$, as desired.
\end{proof}

\begin{lemma}\label{diagonalestriviales} Let $j_0,\rho_0\ge 0$ and $a\ge 1$ be integers. If
\begin{enumerate}

\smallskip

\item $\alpha_l(X^{j_0})=0$ for all $l\le \rho_0$,

\smallskip

\item $\gamma_s^{(l)}(X)=0$ for all $l>\rho_0$ and $s<l+a$,

\smallskip

\end{enumerate}
then $\alpha_l(X^{j+i})=0$ for all $i\ge 0$, $l\le \rho_0+ia$ and $j\ge j_0$.
\end{lemma}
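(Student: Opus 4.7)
The plan is to proceed by induction on $i$. For the base case $i = 0$, the desired conclusion is that $\alpha_l(X^j) = 0$ for all $l \le \rho_0$ and all $j \ge j_0$, which is exactly what Lemma~\ref{nuevo} yields from hypothesis~(1).

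For the inductive step, assume the statement holds for a given $i$. To establish it for $i+1$, fix $j \ge j_0$ and $l \le \rho_0 + (i+1)a$, and use item~(3) of Theorem~\ref{teorema 2.1} applied to the product $X^{j+i} \cdot X$ to write
$$
\alpha_l(X^{j+i+1}) \;=\; \sum_{r=0}^{\infty} \alpha_r(X^{j+i})\,\gamma_l^{(r)}(X).
$$
I would then split this sum at the threshold $r = \rho_0 + ia$. For $r \le \rho_0 + ia$, the inductive hypothesis (applied with the same $j$ and the same $i$, but with $l$ replaced by $r$) gives $\alpha_r(X^{j+i}) = 0$. For $r > \rho_0 + ia$, I would invoke hypothesis~(2): since $ia \ge 0$ one has $r > \rho_0$, and from $l \le \rho_0 + (i+1)a$ together with $r > \rho_0 + ia$ it follows that $l - a \le \rho_0 + ia < r$, i.e.\ $l < r + a$, so $\gamma_l^{(r)}(X) = 0$.

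Both contributions therefore vanish, completing the induction. The only real bookkeeping step is the inequality $l < r + a$ that bridges the two hypotheses at the correct value of the threshold; once one sets the threshold at $\rho_0 + ia$ the two ranges of $r$ cover everything and each is killed by exactly one of the two hypotheses, so I do not anticipate any substantial obstacle beyond this matching of indices.
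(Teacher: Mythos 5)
Your proof is correct and follows essentially the same route as the paper: base case via Lemma~\ref{nuevo}, then the product formula from item~(3) of Theorem~\ref{teorema 2.1} with the sum split at $r=\rho_0+ia$, killing the low range by the inductive hypothesis and the high range by hypothesis~(2). The paper only writes out the cases $i=0$ and $i=1$ and declares the general induction "easy"; your inductive step is exactly that omitted argument, with the index bookkeeping $l<r+a$ checked correctly.
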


\begin{proof} Note that the case $i=0$ follows immediately from Lemma~\ref{nuevo}.
We now prove the assertion for $i=1$. Take $j\ge j_0$ and $l\le \rho_0+a$. By item~(2) we have
$\gamma^{(h)}_l(X) = 0$ for all $h>\rho_0$, since $l<\rho_0+h$. So,
$$
\alpha_l(X^{j+1}) = \sum_{h=0}^{\infty}\alpha_h(X^j)\gamma_l^{(h)}(X) =
\sum_{h=0}^{\rho_0}\alpha_h(X^j) \gamma_l^{(h)}(X) = 0,
$$
where the last equality follows from the case $i=0$. An easy induction argument on $i$
concludes the proof.
\end{proof}

\begin{lemma}\label{diagonales} Let $j_0,\rho_0\ge 0$ and $a\ge 1$ be integers. Assume that
\begin{enumerate}

\smallskip

\item $\alpha_{\rho_0+1}(X^{j_0})\ne 0$,

\smallskip

\item $\alpha_l(X^{j_0})=0$ for all $l\le \rho_0$,

\smallskip

\item $\gamma_s^{(l)} = \delta_{ls}ev_0$ for all $l> \rho_0$ and $s<l+a$,

\smallskip

\item $\gamma_{l+a}^{(l)} = \alpha_1\circ \alpha_{a+1}$ for all $l>\rho_0$.

\smallskip

\end{enumerate}
Then
$$
\alpha_{\rho_0+ia+1}(X^{j_0+iu}) = \alpha_{\rho_0+1}(X^{j_0})\alpha_{1,a+1}(X^u)^i \qquad
\text{for all $u\ge 1$ and $i\ge 0$.}
$$
Moreover, if for each $u\ge 1$ there is an $i\ge 0$ such that $\alpha_{\rho_0+ia+1}(X^{j_0+iu})
= 0$, then
\begin{alignat*}{2}
& \alpha_{1,a+1}=0,\\
& \alpha_{\rho_0+ia+1}(X^{j_0+iu})=0 &&\qquad \text{for all $u\ge 1$ and $i\ge 1$,}\\
& \gamma_s^{(l)}=\delta_{ls}ev_0 &&\qquad \text{for all $l>\rho_0$ and $s\le l+a$.}
\end{alignat*}
\end{lemma}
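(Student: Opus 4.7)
The plan is to establish the displayed identity by induction on $i\ge 0$, then extract the moreover statements using that $k[X]$ is an integral domain. The base case $i=0$ is immediate (the product $\alpha_{1,a+1}(X^u)^0$ is understood to be $1$). For the inductive step, I would expand $X^{j_0+(i+1)u}=X^{j_0+iu}\cdot X^u$ via formula \eqref{eq0} of Theorem~\ref{teorema 2.1}, giving
\[
\alpha_{\rho_0+(i+1)a+1}(X^{j_0+(i+1)u}) = \sum_{h\ge 0}\alpha_h(X^{j_0+iu})\,\gamma^{(h)}_{\rho_0+(i+1)a+1}(X^u),
\]
and then kill all but one term on the right.

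To discard the low-index terms I would invoke Lemma~\ref{diagonalestriviales}: its hypothesis~(1) is our~(2), and its hypothesis~(2) follows from our~(3) since $\gamma^{(l)}_s(X)=\delta_{ls}\,ev_0(X)=0$ when $l>\rho_0$ and $s<l+a$. Applied with $j=j_0$ and with its ``$i$'' taken to be $iu\ge i$ (using $u\ge 1$), this yields $\alpha_h(X^{j_0+iu})=0$ whenever $h\le\rho_0+ia$. For the remaining indices $h>\rho_0+ia$ (in particular $h>\rho_0$), hypotheses (3) and (4) pin down the contributions. Setting $l=h$ and $s=\rho_0+(i+1)a+1$: when $h=\rho_0+ia+1$ we have $s=l+a$, so~(4) gives $\gamma^{(l)}_s=\alpha_{1,a+1}$; when $h>\rho_0+ia+1$ we have $s<l+a$, so~(3) forces $\gamma^{(l)}_s=\delta_{ls}\,ev_0$, which vanishes at $X^u$ either because $h\ne s$ or, in the boundary case $h=s=\rho_0+(i+1)a+1$, because $ev_0(X^u)=0$ for $u\ge 1$. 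Only $h=\rho_0+ia+1$ survives, giving
\[
\alpha_{\rho_0+(i+1)a+1}(X^{j_0+(i+1)u}) = \alpha_{\rho_0+ia+1}(X^{j_0+iu})\,\alpha_{1,a+1}(X^u),
\]
and the inductive hypothesis closes the step.

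For the moreover part, fix $u\ge 1$ and pick $i$ with $\alpha_{\rho_0+ia+1}(X^{j_0+iu})=0$. The main formula then reads $\alpha_{\rho_0+1}(X^{j_0})\,\alpha_{1,a+1}(X^u)^i=0$ in $k[X]$. Since $k$ is a domain, so is $k[X]$; combined with hypothesis~(1) this forces $\alpha_{1,a+1}(X^u)=0$. As this holds for every $u\ge 1$, and as $\alpha_{1,a+1}(1)=\alpha_1(\alpha_{a+1}(1))=\alpha_1(0)=0$ (using $\alpha_{a+1}(1)=\delta_{a+1,1}=0$ for $a\ge 1$), $k$-linearity gives $\alpha_{1,a+1}=0$ on $k[X]$. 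Plugging this back into the main formula yields $\alpha_{\rho_0+ia+1}(X^{j_0+iu})=0$ for every $i\ge 1$. Finally,~(3) already covers $s<l+a$, while the case $s=l+a$ follows from~(4) together with $\alpha_{1,a+1}=0$, since $\delta_{l,l+a}=0$.

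The main obstacle is the combinatorial bookkeeping in the inductive step: one must recognize that the shift by $+a$ built into hypothesis~(4) is exactly what links the index $\rho_0+ia+1$ to $\rho_0+(i+1)a+1$, and that the evaluation-at-zero form of~(3) kills the remaining diagonal contribution when $u\ge 1$. Once this isolation is carried out, the two applications of the domain property are routine.
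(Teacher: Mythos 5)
Your proof is correct and follows essentially the same route as the paper's: induction on $i$, with Lemma~\ref{diagonalestriviales} killing the terms $h\le\rho_0+ia$ and hypotheses~(3)--(4) isolating the single surviving term $h=\rho_0+ia+1$ in the expansion of $\alpha_{\rho_0+(i+1)a+1}(X^{j_0+iu}\cdot X^u)$, followed by the domain argument and $\alpha_{a+1}(1)=0$ for the ``moreover'' part. The paper leaves the final assertions as ``follows easily''; your explicit treatment of them (including the case $s=l+a$ via hypothesis~(4) and $\alpha_{1,a+1}=0$) is exactly what is intended.
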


\begin{proof} For $i=0$ the first assertion is trivial. Suppose it is true for~$i$.
By Lemma~\ref{diagonalestriviales},
$$
\alpha_l(X^{j_0+iu})=0\quad\text{for $l\le \rho_0+ia$,}
$$
since $j_0+(u-1)i\ge j_0$. Write $p = \rho_0+ia+1$. By item~(3), we know that
$\gamma_{p+a}^{(l)}(X^u)=0$ for all $l>p$. So,
\begin{align*}
\alpha_{p+a}(X^{j_0+(i+1)u})&=\sum_{l=0}^{\infty}\alpha_l(X^{j_0+iu})
\gamma_{p+a}^{(l)}(X^u)\\
&= \alpha_p(X^{j_0+iu})\gamma_{p+a}^{(p)}(X^u)\\
&= \alpha_{\rho_0+1}(X^{j_0})\alpha_{1,a+1}(X^u)^i\alpha_{1,a+1}(X^u)\\
&= \alpha_{\rho_0+1}(X^{j_0})\alpha_{1,a+1}(X^u)^{i+1},
\end{align*}
where the third equality follows from the inductive hypothesis and item~(4). It is now clear
that if for each $u\ge 1$ there is an $i$ such that $\alpha_{\rho_0+ia+1}(X^{j_0+iu}) = 0$,
then $\alpha_{1,a+1}(X^u) = 0$, for all $u\ge 1$. Combined this with item~(2) of
Theorem~\ref{teorema 2.1}, we obtain $\alpha_{1,a+1} = 0$. The remainder assertions follows now
easily.
\end{proof}

\begin{lemma}\label{numerico}
Let $b,n_1,\dots,n_l\ge \nu\ge 2$ be integers such that:
\begin{enumerate}

\smallskip

\item $\sum_{i=1}^ln_i\le (l-1)b+\nu$,

\smallskip

\item If $i\ge 2$ and $n_i<b$, then $n_{i-1}\ge b(\nu-1)+2$,

\smallskip

\item If $\nu=2$, then $n_{i-2}\ge 2b-1$ for all $i\ge 3$ with $n_i<b$ and $n_{i-1}\le 2b-2$.

\smallskip

\end{enumerate}
Then $n_1=\nu$ and $n_i=b$ for $i=2,\dots,l$.
\end{lemma}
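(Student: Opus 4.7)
The plan is to recast condition~(1) in terms of excesses and derive a contradiction whenever the conclusion fails. Set $e_1 = n_1 - \nu$ and $e_i = n_i - b$ for $i \ge 2$, so that (1) reads $\sum_{i=1}^{l} e_i \le 0$ while the conclusion becomes $e_i = 0$ for every $i$. Since $n_i \ge \nu$, one has $e_1 \ge 0$ and $e_i \ge \nu - b$ for $i \ge 2$. The case $b = \nu$ is immediate (all $e_i \ge 0$ and $\sum e_i \le 0$ force $e_i = 0$), so assume $b > \nu$ and introduce the ``bad set'' $B = \{i \ge 2 : n_i < b\}$. If $B = \emptyset$ the same argument still gives $e_i = 0$ for all $i$; the heart of the proof is therefore to show that $B \ne \emptyset$ leads to $\sum e_i > 0$, contradicting~(1).

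Write $B = \{i_1 < \cdots < i_k\}$ with $k \ge 1$. At each $i_j$ the ``loss'' is $-e_{i_j} \le b - \nu$, while condition~(2) supplies the ``bonus'' $n_{i_j - 1} \ge b(\nu - 1) + 2$. A short check (using $b \ge \nu$) gives $e_{i_j - 1} \ge b(\nu - 2) + 2$ whether $i_j - 1 \ge 2$ or $i_j - 1 = 1$. Because $n_{i_j - 1} \ge b + 2 > b$, the index $i_j - 1$ is not in $B$; consequently the bad indices are spaced by at least two and the ``bonus indices'' $i_j - 1$ are pairwise distinct and disjoint from $B$.

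The core computation is to show that each bad index, grouped with its bonus neighbour(s), contributes a strictly positive net to $\sum e_i$. For $\nu \ge 3$ the pair $\{i_j - 1, i_j\}$ already suffices, since
\[
e_{i_j - 1} + e_{i_j} \ge b(\nu - 2) + 2 - (b - \nu) = b(\nu - 3) + \nu + 2 \ge \nu + 2 > 0.
\]
For $\nu = 2$ one splits further. If $i_j = 2$, the stronger bound $e_1 \ge b$ (from $n_1 \ge b + 2$) makes $e_1 + e_{i_j} \ge 2$. If $i_j \ge 3$ and $n_{i_j - 1} \ge 2b - 1$, then $e_{i_j - 1} \ge b - 1$ and the pair sum is $\ge 1$. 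Finally, if $i_j \ge 3$ and $n_{i_j - 1} \le 2b - 2$, condition~(3) forces $n_{i_j - 2} \ge 2b - 1$, so $e_{i_j - 2} \ge b - 1$ (with a separate check in the degenerate case $i_j - 2 = 1$), and the triple $\{i_j - 2, i_j - 1, i_j\}$ contributes at least $(b - 1) + 2 - (b - 2) = 3$.

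The main obstacle is the bookkeeping needed to sum positive contributions without double-counting, which requires the groups attached to distinct bad indices to be pairwise disjoint. A coincidence $i_j - 2 = i_{j'}$ would demand $n_{i_j - 2} < b$, contradicting $n_{i_j - 2} \ge 2b - 1$; a coincidence $i_j - 2 = i_{j'} - 1$ would demand $n_{i_j - 1} < b$, contradicting $n_{i_j - 1} \ge b + 2$; and the pairwise distinctness of the $i_j - 1$'s is immediate. Once disjointness is secured, the remaining indices contribute $e_i \ge 0$ trivially, so summing all contributions gives $\sum_{i=1}^{l} e_i > 0$, the contradiction that closes the argument.
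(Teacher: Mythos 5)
Your proof is correct and follows essentially the same route as the paper's: you group each index with $n_i<b$ together with the one or two preceding indices forced to be large by hypotheses~(2) and~(3), verify the groups are pairwise disjoint, and show each group contributes strictly positively, contradicting~(1). The only (cosmetic) difference is your normalization by the excesses $e_i$, which absorbs the special role of the first index more uniformly than the paper's separate treatment of $i_1$.
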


\begin{proof} If $n_i\ge b$ for all $i$, then by item~(1),
$$
(l-1)b+\nu\ge \sum_{i=1}^ln_i\ge bl.
$$
Hence $b=n_1=\dots=n_l=\nu$. Thus we can assume that the set of the indices $i_1<i_2<\dots
<i_m$, such that $n_{i_j}<b$, is not empty. We claim that $m=1$. In the hope of reaching a
contradiction we assume $m\ge 2$. We consider first the case $\nu\ge 3$. This implies
$2+\nu+(\nu-1)b>2b$. Set $M:=\{i_2,\dots, i_m\}$, write $M-1:=\{i-1:i\in M\}$ and set
$L:=\{i_1\}\cup M\cup (M-1)$. From item~(2) it follows easily that $i_2>2$ and $\{i_1\}$, $M$
and $(M-1)$ are pairwise disjoint sets, which implies $\# L = 2m-1$. Again by item~(2), we have
$n_{i-1}\ge b(\nu-1)+2$, for $i\in M$. Thus,
\begin{align*}
\sum_{i\in L} n_i &= n_{i_1}+ \sum_{i\in M}n_i + \sum_{i\in M-1 }n_i \\
&\ge \nu + (m-1)\nu + (m-1)(b(\nu-1)+2)\\
&> \nu+(m-1)2b,
\end{align*}
since $(\nu-1)b\ge 2b$ and $m-1>0$. Moreover, $n_i\ge b$ for $i\notin L$, and so,
$$
\sum_{i=1}^l n_i = \sum_{i\in L}n_i + \sum_{i\notin L}n_i > \nu+(m-1)2b+(l-(2m-1))b=\nu+b(l-1),
$$
which contradicts item~(1). It remains to consider the case $\nu=2$. Set
\begin{align*}
& I:=\{i_j: j\ge 2\text{ and } b+2\le n_{i_j-1}\le 2b-2\}
\intertext{and}
& J:=\{i_j: j\ge 2\text{ and } n_{i_j-1}\ge 2b-1\}.
\end{align*}
By item~(2), we have $\{i_1<i_2<\dots <i_m\} = \{i_1\}\cup I\cup J$ and $i_2>2$. Write
$$
I-1:=\{i-1:i\in I\},\quad I-2:=\{i-2:i\in I\}\quad\text{and}\quad J-1:=\{i-1:i\in J\}
$$
and set
$$
\bar I:= I\cup (I-1) \cup (I-2),\quad \bar J:= J\cup (J-1)\quad\text{and}\quad K:=\{i_1\}\cup
\bar I\cup \bar J.
$$
We assert that $\{i_1\}$, $I$, $J$, $I-1$, $J-1$ and $I-2$ are pairwise disjoint. It is
immediate that for $\{i_1\}$, $I$ and $J$ this is true. By definition, if $i\in I-1$, then
$n_i\ge b+2$. Therefore $i\notin \{i_1,\dots,i_m\}$ and so $I-1$ is disjoint from $\{i_1\}\cup
I\cup J$. Similarly $J-1$ is disjoint from $\{i_1\}\cup I\cup J$. Moreover $(I-1)\cap (J-1) =
\emptyset$, since $I\cap J = \emptyset$ and similarly $(I-2)\cap (I-1) = \emptyset = (I-2)\cap
(J-1)$. It remains to check that
\begin{equation}
\{i_1\}\cap (I-2) = I \cap (I-2) = J \cap (I-2) = \emptyset.\label{eqZ}
\end{equation}
But by item~(3),
$$
i\in I-2\Rightarrow i+2\in I \Rightarrow n_{i+2}<b\text{ and }n_{i+1}\le 2b-2 \Rightarrow
n_i\ge 2b-1\ge b,
$$
from which \eqref{eqZ} follows immediately. This finishes the proof of the assertion. It is
immediate now that
$$
k=1+3k_1+2k_2,
$$
where $k=\# K$, $k_1=\# I$ and $k_2=\# J$. Moreover,
$$
n_i+n_{i-1}\ge 2+2b-1=2b+1\quad\text{for all $i\in J$}
$$
and by condition~(3),
$$
n_i+n_{i-1}+n_{i-2}\ge 2+b+2+2b-1=3b+3\quad\text{for all $i\in I$}.
$$
Thus,
$$
\sum_{i\in K}n_i\ge 2+(3b+3)k_1+(2b+1)k_2> (k-1)b+2,
$$
since $k_1>0$ or $k_2>0$ by assumption. Since $n_i\ge b$ for $i\notin K$, we obtain
$$
\sum_{i=1}^l n_i=\sum_{i\in K}n_i+\sum_{i\notin K}n_i> (k-1)b+2+(l-k)b=(l-1)b+2,
$$
which contradicts item~(1). Hence $m = 1$, but then
$$
(l-1)b+\nu\ge \sum_{i=1}^ln_i\ge (l-1)b+\nu,
$$
where the first inequality is item~(1). So $n_{i_1}=\nu$ and $n_i = b$ for $i\ne i_1$. By
item~(2) this is only possible if $i_1 = 1$.
\end{proof}

In the sequel we assume that $\alpha_0=0$, $\alpha_1=\ev_0$ and there exists $n>1$ such that
$\alpha_n\ne 0$. Let $\nu$ be the least number satisfying this property. By Remark~\ref{remark
2.2} we know that $\alpha_{\nu}(X)\ne 0$. We define
$$
r_j(b)=\frac{(\nu-1)(b^j-1)}{b-1}\quad\text{for $b\ge \nu$ and $j\ge 1$.}
$$

\begin{lemma}\label{alfasygammas} Let $k_1>0$ and $b\ge\nu$ be integers. Assume $\alpha_{1k}=
0$ for $k=2,\dots,k_1$.

\begin{enumerate}

\smallskip

\item If $l>k_1$, then
$$
\gamma_{l+k_1}^{(l)} = \alpha_1^{l-1}\circ \alpha_{k_1+1}.
$$

\smallskip

\item If $\nu=2$, $b\ge 4$ and $\alpha_{ij}=0$ for $2\le i\le b+1$ and $j< b$, then
$$
\gamma^{(l)}_{l+k_1} = \alpha_1^{l-1}\circ \alpha_{k_1+1},
$$
as long as $k_1<2b+8$ and $l\ge b+2$.

\smallskip

\item Suppose $\alpha_{ij}=0$ for $2\le i\le 1+(\nu-1)b$ and $j< b$. When $\nu=2$ also suppose
that $\alpha_{ijk}=0$ for $2\le i\le 2b-2$, $j\le 2b-2$ and $k<b$. Let $j_0\in \mathds{N}$
arbitrary and set $r_0 = r_{j_0}(b)$. We have
$$
\gamma^{(r_0+1)}_{r_0+1+(\nu-1)b^{j_0}} = \alpha_1^{r_0}\circ \alpha_{(\nu-1)b^{j_0}+1}
+\alpha_{\nu}\circ \alpha_b^{r_0}
$$
and
$$
\gamma^{(l)}_{l+k_1} = \alpha_1^{l-1}\circ \alpha_{k_1+1},
$$
as long as $k_1\le (\nu-1)b^{j_0}$ and $l>r_0$ with $(l,k_1)\ne (r_0+1,(\nu-1)b^{j_0})$.

\end{enumerate}

\end{lemma}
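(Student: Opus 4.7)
My plan is to expand each $\gamma_{j}^{(l)}$ via the definition
$\gamma_{l+k_{1}}^{(l)}=\sum_{|n_{1},\dots ,n_{l}|=l+k_{1}}\alpha_{n_{1}}\circ\cdots\circ\alpha_{n_{l}}$
and, using $\alpha_{0}=0$, restrict attention to tuples $(n_{1},\dots ,n_{l})$ with $n_{i}\ge 1$. The three items then amount to showing that among such tuples almost all compositions vanish, the only survivor being the ``target'' tuple $(1,\dots ,1,k_{1}+1)$ (plus one exceptional tuple in item~(3)). For this I would repeatedly use three ingredients: (a) $\alpha_{1}=\ev_{0}$, so a block $\alpha_{1}^{r}$ applied to any polynomial produces a constant; (b) $\alpha_{k}(1)=0$ for $k\neq 1$ by item~(2) of Theorem~\ref{teorema 2.1}, so $\alpha_{k}\circ\alpha_{1}=0$ whenever $k\ge 2$; and (c) the hypothesis $\alpha_{1k}=0$ for $2\le k\le k_{1}$, together with the stronger hypotheses $\alpha_{ij}=0$ (respectively $\alpha_{ijk}=0$) available in items~(2) and~(3).

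\textbf{Item (1).} I would do a case analysis on the largest index $p$ with $n_{p}\ge 2$. If $p<l$, the suffix $\alpha_{n_{p}}\circ\alpha_{1}^{l-p}$ first evaluates at $0$ and then applies $\alpha_{n_{p}}$ with $n_{p}\ge 2$ to a constant, hence vanishes by~(b). If $p=l$ and some other index $q<l$ has $n_{q}\ge 2$, take $q$ minimal; a size argument using $\sum n_{i}=l+k_{1}$, $n_{l}\ge 2$ and $n_{q}\ge 2$ forces $n_{q}\le k_{1}$, so either $q>1$ and the pair $(n_{q-1},n_{q})=(1,n_{q})$ kills the term by~(c), or $q=1$ and $n_{l}$ is reached through a run of $\alpha_{1}$'s (again evaluating to a constant which $\alpha_{n_{1}}$ kills, since $l>k_{1}$ guarantees such a run exists). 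The only non-vanishing tuple is $(1,\dots ,1,k_{1}+1)$, giving $\alpha_{1}^{l-1}\circ\alpha_{k_{1}+1}$.

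\textbf{Items (2) and (3).} The strategy is the same, but the hypotheses $\alpha_{ij}=0$ for $2\le i\le b+1$ (or $2\le i\le (\nu-1)b+1$) with $j<b$, and, when $\nu=2$, $\alpha_{ijk}=0$ for $2\le i,j\le 2b-2$ with $k<b$, allow me to kill many more tuples by detecting a ``small'' $n_{u+1}$ preceded by a ``medium'' $n_{u}$. After eliminating those tuples, each surviving tuple $(n_{1},\dots ,n_{l})$ must satisfy: $n_{i}\ge \nu$ for all indices where $n_{i}\ge 2$, that if $n_{i}<b$ then $n_{i-1}\ge (\nu-1)b+2$, and in the $\nu=2$ case also the three-term condition $n_{i-2}\ge 2b-1$ whenever $n_{i}<b$ and $n_{i-1}\le 2b-2$; these are precisely the three hypotheses of Lemma~\ref{numerico}. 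Coupled with $\sum n_{i}=l+k_{1}\le (l-1)b+\nu$ (which comes from the size bound on $k_{1}$), Lemma~\ref{numerico} forces $n_{1}=\nu$ and $n_{i}=b$ for $i\ge 2$. Either this boundary tuple is ruled out by the range of $(l,k_{1})$ (in item~(2), and in item~(3) away from the exceptional pair $(l,k_{1})=(r_{0}+1,(\nu-1)b^{j_{0}})$), so that again only $(1,\dots ,1,k_{1}+1)$ survives, or it is allowed---which is exactly the case $(l,k_{1})=(r_{0}+1,(\nu-1)b^{j_{0}})$, producing the additional term $\alpha_{\nu}\circ\alpha_{b}^{r_{0}}$ in the formula for $\gamma^{(r_{0}+1)}_{r_{0}+1+(\nu-1)b^{j_{0}}}$.

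\textbf{Main obstacle.} The delicate point is the bookkeeping in items~(2) and~(3): one has to verify that the ``subtle'' constraints of Lemma~\ref{numerico}---in particular the third, $\nu=2$ condition requiring knowledge of two neighbours to the left of a small entry---can actually be extracted from the vanishing hypotheses $\alpha_{ij}=0$ and $\alpha_{ijk}=0$, and that the size bound $\sum n_{i}\le (l-1)b+\nu$ is satisfied in each sub-case. Matching the allowed values of $k_{1}$ and $l$ with the precise arithmetic identity $\nu+r_{0}b=r_{0}+1+(\nu-1)b^{j_{0}}$ (so that the tuple $(\nu,b,\dots ,b)$ fits exactly in the boundary case of item~(3)) is where the combinatorial argument is tightest, and is where Lemma~\ref{numerico} is indispensable.
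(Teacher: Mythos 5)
Your reduction and your treatment of items~(1) and~(3) follow the paper's own proof: there too one first shows that in a nonvanishing $\alpha_{n_1\dots n_l}$ the entries equal to $1$ must form an initial segment (because $\alpha_k\circ\alpha_1=0$ for $k\ne 1$, and $\alpha_{1k}=0$ for $2\le k\le k_1$ then forces the unique surviving tuple $(1,\dots,1,k_1+1)$), so that $\gamma^{(l)}_{l+k_1}=\alpha_1^{l-1}\circ\alpha_{k_1+1}+\sum\alpha_{n_1\dots n_l}$ with the sum over tuples having all $n_i\ge\nu$; and in item~(3) that residual sum is handled exactly as you propose, by verifying the three hypotheses of Lemma~\ref{numerico} and observing that its conclusion $(\nu,b,\dots,b)$ is compatible with $|n_1,\dots,n_l|=l+k_1$ only when $(l,k_1)=(r_0+1,(\nu-1)b^{j_0})$, which produces the extra term $\alpha_\nu\circ\alpha_b^{r_0}$.

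The gap is in item~(2). You route it through Lemma~\ref{numerico} as well, but the third hypothesis of that lemma --- the $\nu=2$ condition that $n_{i-2}\ge 2b-1$ whenever $n_i<b$ and $n_{i-1}\le 2b-2$ --- can only be extracted from a vanishing statement of the form $\alpha_{ijk}=0$, which is assumed in item~(3) but \emph{not} in item~(2); there you only have $\alpha_{ij}=0$ for $2\le i\le b+1$, $j<b$. This hypothesis is not removable: for $b=4$ the tuple $(2,6,2,4,\dots,4)$ satisfies hypotheses~(1) and~(2) of Lemma~\ref{numerico} but not its conclusion, so the lemma simply cannot be invoked in item~(2). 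The paper instead closes item~(2) by a direct count: if $i_1<\dots<i_m$ are the positions with $n_{i_j}<b$, then each $i_j$ with $j\ge 2$ is preceded by an entry $\ge b+2$, whence $l+k_1\ge 2+2(m-1)+(b+2)(m-1)+b(l-2m+1)$, i.e. $k_1\ge (4-b)m+l(b-1)-2$; using $m\le (l+1)/2$, $l\ge b+2$ and $b\ge 4$ this gives $k_1\ge 2b+8$, contradicting $k_1<2b+8$. So in the range of item~(2) no tuple with all $n_i\ge 2$ survives at all, and Lemma~\ref{numerico} is never needed there. You must replace the appeal to that lemma in item~(2) by this (or an equivalent) counting argument.
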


\begin{proof} By definition
$$
\gamma_{l+k_1}^{(l)}= \sum_{|n_1,\dots, n_l|=l+k_1}\alpha_{n_1\dots n_l}.
$$
Suppose $\alpha_{n_1\dots n_l}\ne 0$. Since $\alpha_0=0$, this implies $n_1,\dots,n_l\ge 1$.
Assume $n_i=1$ for some $i$ and let $i_0$ be the greatest index satisfying this property. Being
$\alpha_{k1} = 0$ for all $k\ne 1$ it must be $n_i = 1$ for all $i\le i_0$. Since $|n_1,\dots,
n_l| = l+k_1 > l$ and $\alpha_{1k} = 0$ for $k=2,\dots, k_1$ the only possible choice is
$n_i=1$ for $i=1,\dots,l-1$ and $n_l=k_1+1$. Hence,
$$
\gamma_{l+k_1}^{(l)}= \alpha_1^{l-1}\circ \alpha_{k_1+1} + \sum_{|n_1,\dots, n_l|=l+k_1\atop
n_1,\dots, n_l\ge \nu}\alpha_{n_1\dots n_l},
$$
since $\alpha_2 =\dots = \alpha_{\nu-1} = 0$. Item~(1) follows now immediately, because
$$
l>k_1\Rightarrow 2l>l+k_1 = \sum_{i=1}^ln_i \Rightarrow n_i< 2\text{ for some $i$.}
$$
In order to prove item~(2) we proceed by contradiction. Suppose there is $\alpha_{n_1\dots n_l}
\ne 0$ with $n_1,\dots,n_l\ge 2$. Let $i_1<i_2<\dots<i_m$ be the indices such that $n_{i_j}<
b$. Set $J = \{i_j:j\ge 2\}$ and $K = \{i_1\}\cup J\cup (J-1)$. By hypothesis $n_i\ge b+2$ for
all $i\in J-1$. Consequently $\{i_1\}$, $J$ and $J-1$ are pairwise disjoint. So $2m-1\le l$ and
\begin{align*}
l+k_1 & = n_{i_1} + \sum_{i\in J} n_i + \sum_{i\in J-1} n_i + \sum_{i\notin K} n_i\\
& \ge 2+2(m-1)+(b+2)(m-1) + b(l-2m+1).
\end{align*}
Hence $k_1\ge (4-b)m + l(b-1)-2$ and so,
\begin{alignat*}{2}
k_1 &\ge (4-b)\frac{l+1}{2} + l(b-1)-2\qquad &&\text{since $m\le (l+1)/2$ }\\
&= l(1+b/2)-b/2\\
&\ge (b+2)(1+b/2)- b/2 &&\text{since $l\ge b+2$}\\
&=  b(b-1)/2 + 2b + 2\\
&\ge  2b + 8 &&\text{since $b\ge 4$}.
\end{alignat*}
This contradicts the fact that $k_1<2b+8$. Next we prove item~(3). Assume that $k_1\le
(\nu-1)b^{j_0}$ and $l>r_0$. Suppose there exists $\alpha_{n_1\dots n_l} \ne 0$ with
$n_1,\dots,n_l\ge \nu$ and $|n_1,\dots, n_l|=l+k_1$. We will prove that $n_1,\dots,n_l$ fulfill
the conditions of Lemma~\ref{numerico}. The first one follows from the fact that
$$
l>r_0\Rightarrow l-1\ge \frac{(\nu-1)(b^{j_0}-1)}{b-1}\Rightarrow (l-1)(b-1) \ge
(\nu-1)(b^{j_0}-1),
$$
and so
\begin{equation}
(l-1)b+\nu\ge (\nu-1)b^{j_0}+l \ge k_1+l = \sum_{i=0}^l n_i,\label{eqA}
\end{equation}
since $k_1\le (\nu-1)b^{j_0}$. If $i\ge 2$ and $n_i< b$, then necessarily $n_{i-1}\ge (\nu-1)
b+2$, since $\alpha_{n_{i-1}n_i}=0$ for $2\le n_{i-1}< (\nu-1)b+2$ and $n_i< b$. So,
condition~(2) holds. Finally, if $\nu=2$, $i\ge 3$, $n_i<b$ and $n_{i-1}\le 2b-2$, then
necessarily $n_{i-2}\ge 2b-1$, since $\alpha_{n_{i-2}n_{i-1}n_i}=0$ otherwise. This establishes
condition~(3). From Lemma~\ref{numerico} it follows now that $n_1=\nu$ and $n_i=b$ for $i>1$.
Hence,
$$
l+k_1 = \sum_{i=1}^ln_i = (l-1)b+\nu,
$$
and so $k_1 = (l-1)b-l+\nu \ge (\nu-1) b^{j_0}$, where the inequality follows from~\eqref{eqA}.
Since, by hypothesis $k_1\le (\nu-1)b^{j_0}$, we conclude that
$$
k_1=(\nu-1)b^{j_0}\quad\text{and}\quad l = \frac{(\nu-1)b^{j_0}+b-\nu}{b-1} = r_0+1.
$$
This finishes the proof.
\end{proof}

\begin{lemma}\label{curvas} Suppose that for each $a\ge 1$ and $j,k\ge 0$ there exists $i\ge
1$ such that $\alpha_{k+ia}(X^{j+i}) = 0$. Let $j_0\ge 1$ and $b\ge \nu$ be integers. Set
$\rho_0 = r_{j_0}(b)$ and $\rho_1= r_{j_0+1}(b)$. When $j_0\ge 2$ we also write $\rho_{-1} =
r_{j_0-1}(b)$. Assume that
\begin{enumerate}

\smallskip

\item $\alpha_{\rho_0+1}(X^{j_0})\ne 0$,

\smallskip

\item $\alpha_l(X^{j_0})=0$ for all $l\le \rho_0$,

\smallskip

\item $\gamma_s^{(l)}=\delta_{ls}\ev_0$ for all $l> \rho_0$ and $s<l+(\nu-1)b^{j_0-1}$,

\smallskip

\item $\alpha_{1k}=0$ for $2\le k\le (\nu-1)b^{j_0-1}$,

\smallskip

\item $\alpha_{ij}=0$ for $j<b$ and $2\le i\le 1+(\nu-1)b$,

\smallskip

\item If $\nu=2$, then $\alpha_{ijk}=0$ for $2\le i\le 2b-2$, $j\le 2b-2$ and $k<b$,

\smallskip

\item For $j_0\ge 2$, we have $\gamma_{\rho_0+1}^{(\rho_{-1}+1)} = \alpha_{\nu}\circ
\alpha_b^{\rho_{-1}} +\alpha_1\circ \alpha_{(\nu-1)b^{j_0-1}+1}$.

\smallskip

\end{enumerate}
Then,
\begin{enumerate}

\smallskip

\item[(8)] $\alpha_l(X^j)=0$ for all $l\le \rho_1$ and $j>j_0$,

\smallskip

\item[(9)] $\gamma_s^{(l)} = \delta_{ls}\ev_0$ for all $l>\rho_0$ and $s<l+(\nu-1)b^{j_0}$,

\smallskip

\item[(10)] $\alpha_{1k} = 0$ for $2\le k\le (\nu-1)b^{j_0}$,

\smallskip

\item[(11)] $\gamma_{\rho_1+1}^{(\rho_0+1)} = \alpha_{\nu}\circ \alpha_b^{\rho_0}+\alpha_1\circ
\alpha_{(\nu-1)b^{j_0}+1}$,

\smallskip

\item[(12)] For $j_0\ge 2$, we have
$$
\qquad\gamma_{\rho_0+1}^{(\rho_{-1}+1)}=\alpha_{\nu}\circ\alpha_b^{\rho_{-1}}\quad\text{and}
\quad \alpha_{\rho_1+1}(X^{j_0+1})=\alpha_{\rho_0+1}(X^{j_0})\gamma_{\rho_1+1}^{(\rho_0+1)}(X).
$$
\end{enumerate}
\end{lemma}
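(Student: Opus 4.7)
My plan is to prove conclusions (8)--(12) by a finite induction that extends the vanishing ranges of $\alpha_{1k}$ and of $\gamma_s^{(l)}$ step by step. We start with the ranges provided by Curves' hypotheses (3) and (4) (namely $k\le (\nu-1)b^{j_0-1}$ and $s<l+(\nu-1)b^{j_0-1}$) and end at the ranges required in (9) and (10). The workhorse is Lemma~\ref{diagonales}, whose hypothesis (4) will be supplied by Lemma~\ref{alfasygammas}.

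For the first step I would set $a:=(\nu-1)b^{j_0-1}$ and verify the hypotheses of Lemma~\ref{diagonales}. Items (1), (2), (3) there coincide with Curves' items (1), (2), (3); item (4) follows from Lemma~\ref{alfasygammas}(1) applied with $k_1=a$, after noting that $\alpha_1^{l-1}=\alpha_1$ for $l\ge 2$ by the idempotency of $\ev_0$. Assuming the Moreover clause's hypothesis --- discussed below --- I obtain $\alpha_{1,a+1}=0$ and $\gamma_s^{(l)}=\delta_{ls}\ev_0$ for $l>\rho_0$ and $s\le l+a$. I then iterate with $a$ replaced successively by $a+1,a+2,\ldots,(\nu-1)b^{j_0}-1$. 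At each subsequent step Lemma~\ref{alfasygammas}(3) takes the role of (1), since $k_1>\rho_0$ and the simpler condition $l>k_1$ is no longer available; Curves' hypotheses (5) and (6) (the latter in the case $\nu=2$), together with the accumulated $\alpha_{1k}=0$ from the previous iterations, are precisely what Lemma~\ref{alfasygammas}(3) requires. When the iteration terminates, conclusions (10) and (9) are fully established.

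The remaining conclusions follow quickly. Conclusion (11) is the special formula at the top of Lemma~\ref{alfasygammas}(3), taken at $k_1=(\nu-1)b^{j_0}$ and $l=\rho_0+1$, together with $\alpha_1^{\rho_0}=\alpha_1$. Conclusion (8) comes from Lemma~\ref{diagonalestriviales} applied with $a=(\nu-1)b^{j_0}$: its hypothesis (1) is Curves' (2), and hypothesis (2) is the evaluation at $X$ of (9), using $\alpha_1(X)=0$. For (12) with $j_0\ge 2$, the first equation follows from Curves' hypothesis (7), since the very first iteration has already yielded $\alpha_{1,(\nu-1)b^{j_0-1}+1}=0$. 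The second equation comes from expanding $\alpha_{\rho_1+1}(X^{j_0+1})=\sum_r\alpha_r(X^{j_0})\gamma_{\rho_1+1}^{(r)}(X)$ via Theorem~\ref{teorema 2.1}(3) and checking that only the $r=\rho_0+1$ term survives: terms with $r\le\rho_0$ die by Curves' (2); terms with $r>\rho_1+1$ die because $\gamma_j^{(r)}=0$ whenever $r>j$ (since $\alpha_0=0$); the term $r=\rho_1+1$ dies because $\gamma_{\rho_1+1}^{(\rho_1+1)}(X)=\alpha_1(X)=0$; and for $\rho_0+1<r\le\rho_1$, Lemma~\ref{alfasygammas}(3) gives $\gamma_{\rho_1+1}^{(r)}=\alpha_1\circ\alpha_{\rho_1+2-r}$ with $2\le\rho_1+2-r\le(\nu-1)b^{j_0}$, so the composition vanishes by (10).

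The principal obstacle is verifying, at each iteration, the Moreover clause's hypothesis of Lemma~\ref{diagonales}: that for every $u\ge 1$ there is an $i$ with $\alpha_{\rho_0+ia+1}(X^{j_0+iu})=0$. By the product formula in Lemma~\ref{diagonales} this is equivalent, via the domain property of $k[X]$ and hypothesis (1), to $\alpha_{1,a+1}(X^u)=0$ for every $u$. Curves' running hypothesis applied with $(a,j_0,\rho_0+1)$ handles $u=1$ at once. For $u>1$ one must iterate the running hypothesis along shifted starting points $(a,j_0+m(u-1),\rho_0+1)$ so that the index $\tilde\imath$ it returns matches the slope-$u/a$ pattern dictated by the product formula; concretely, the product formula then rewrites $\alpha_{\rho_0+1+\tilde\imath a}(X^{j_0+m(u-1)+\tilde\imath})$ as $\alpha_{\rho_0+1}(X^{j_0})\alpha_{1,a+1}(X^{u'})^{\tilde\imath}$ with $u'=m(u-1)/\tilde\imath+1$ provided $\tilde\imath\mid m(u-1)$, and a judicious choice of $m$ together with a divisibility argument on the output of the running hypothesis forces $\alpha_{1,a+1}(X^u)=0$. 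Pinning down this combinatorial choice is the delicate point.
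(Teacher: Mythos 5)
Your overall route is the paper's: a finite induction on the length $k_1$ of the vanishing window, driven by Lemma~\ref{diagonales} with its condition~(4) supplied by Lemma~\ref{alfasygammas}, followed by essentially the same short derivations of (8), (11) and (12). Those derivations are correct (for the second identity in (12), note that conclusion~(9) alone already kills every term with $r>\rho_0+1$ in $\sum_r\alpha_r(X^{j_0})\gamma_{\rho_1+1}^{(r)}(X)$, since $\rho_1+1<r+(\nu-1)b^{j_0}$ there; your separate appeal to Lemma~\ref{alfasygammas}(3) for the middle range is redundant but not wrong).

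The genuine gap is exactly where you place it: verifying, at each step, the ``for each $u\ge 1$ there is an $i$'' hypothesis of the Moreover clause of Lemma~\ref{diagonales}. (The paper's own proof silently skips this check; in the paper's applications the stronger hypothesis that $\alpha_k=0$ for all large $k$ is in force, which makes it trivial.) Your proposed repair for $u>1$ does not work: the running hypothesis applied at the shifted point $(a,\,j_0+m(u-1),\,\rho_0+1)$ returns \emph{some} index $\tilde\imath$ over which you have no control, so no choice of $m$ forces $\tilde\imath\mid m(u-1)$, let alone $\tilde\imath=m$ (which is what $u'=u$ would require); moreover the product formula of Lemma~\ref{diagonales} is anchored at a pair $(j_0,\rho_0)$ satisfying its hypotheses~(1)--(2), and nothing guarantees those hypotheses at the shifted starting exponent. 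The correct repair is much simpler and dispenses with the hypothesis for $u\ge 2$ altogether: the only output of the Moreover clause you actually need is $\alpha_{1,a+1}=0$, and this already follows from the case $u=1$. Indeed, once $\alpha_{1,a+1}(X)=0$ is known (from $u=1$, hypothesis~(1) of Lemma~\ref{diagonales} and the domain property), for $u\ge 2$ item~(3) of Theorem~\ref{teorema 2.1} and the multiplicativity of $\alpha_1=\ev_0$ give
$$
\alpha_{1,a+1}(X^u)=\alpha_1\bigl(\alpha_{a+1}(X\cdot X^{u-1})\bigr)
=\sum_{r=1}^{a+1}\alpha_{1r}(X)\,\alpha_1\bigl(\gamma_{a+1}^{(r)}(X^{u-1})\bigr),
$$
the sum stopping at $r=a+1$ because $\gamma_{a+1}^{(r)}=0$ for $r>a+1$ (as $\alpha_0=0$); every factor $\alpha_{1r}(X)$ vanishes, namely for $r=1$ since $\alpha_1(X)=0$, for $2\le r\le a$ by the accumulated vanishing $\alpha_{1k}=0$, and for $r=a+1$ by the case $u=1$. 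Together with $\alpha_{a+1}(1)=0$ this yields $\alpha_{1,a+1}=0$, after which the remaining conclusions of Lemma~\ref{diagonales} follow from its product formula exactly as in that lemma's proof.
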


\begin{proof} By items~(2) and (3) we can apply Lemma~\ref{diagonalestriviales} with
$a=(\nu-1)b^{j_0-1}$ and the same $j_0$ and $\rho_0$. Hence,
$$
\alpha_{l}(X^{j+i})=0\quad\text{for $i\ge 0$, $l\le \rho_0+i(\nu-1)b^{j_0-1}$ and $j\ge j_0$.}
$$
In particular $\alpha_{l}(X^{j+1})=0$ for $l\le \rho_0+(\nu-1)b^{j_0-1}$ and $j\ge j_0$. From
this and items~(3) and~(4), it follows that in order to prove items~(8)--(10), it suffices to
prove by induction on $k=(\nu-1)b^{j_0-1}+1,\dots,(\nu-1)b^{j_0}$ that
\begin{equation}
\begin{aligned}
& \alpha_{1k}=0,\\
& \alpha_{\rho_0+k}(X^{j+1})=0&&\quad\text{for $j\ge j_0$},\\
& \gamma_s^{(l)}=\delta_{ls}\ev_0&&\quad\text{for $l>\rho_0$ and $s<l+k$}
\end{aligned}
\label{eqB}
\end{equation}
By the discussion above and conditions~(3) and~(4), we know that~\eqref{eqB} is true for
$k=(\nu-1)b^{j_0-1}$. Suppose it is true for $k \le k_1$, where $k_1$ is a fix integer
satisfying
$$
(\nu-1)b^{j_0-1}\le k_1<(\nu-1)b^{j_0}.
$$
In order to perform the inductive step it suffices to apply Lemma~\ref{diagonales} with $a=k_1$
and the same $j_0$ and $\rho_0$. Conditions~(1) and~(2) of that lemma are the corresponding
assumptions of the present lemma and condition~(3) follows from the inductive hypothesis. The
fact that condition~(4) is also satisfied (i.e. $\gamma^{(l)}_{l+k_1}=\alpha_1\circ
\alpha_{k_1+1}$ for all $l>\rho_0$) follows from item~(3) of Lemma~\ref{alfasygammas}, which
applies thanks to the inductive hypothesis and assumptions~(5) and (6). Item~(11) also follows
from item~(3) of Lemma~\ref{alfasygammas}. Finally, the first assertion of item~(12) follows
from items~(7) and~(10), since clearly $(\nu-1)b^{j_0-1}+1\le (\nu-1)b^{j_0}$. The second
assertion is a direct consequence of the equality
$$
\alpha_{\rho_1+1}(X^{j_0+1})= \sum_{l=0}^{\rho_1+1} \alpha_l(X^{j_0})\gamma_{\rho_1+1}^{(l)}(X)
$$
and items~(2) and~(9), since $\rho_1+1<l+(\nu-1)b^{j_0}$ for all $l>\rho_0+1$.
\end{proof}

\begin{lemma}\label{nigualados} Suppose $\nu=2$ and $b\ge 4$. Assume that for each integer $a$
satisfying $b\le a < 2b-3$ and $u\ge 1$ there is an $i$ such that $\alpha_{b+ia+2}(X^{iu+2}) =
0$. Write $\alpha_k(X) = \sum_{i\ge 0} q_{ik} X^i$. If
\begin{enumerate}

\smallskip

\item $\alpha_{b+2}(X^2)\ne 0$,

\smallskip

\item $\alpha_k(X^2)=0$ for $k\le b+1$,

\smallskip

\item $\alpha_{1k} = 0$ for $2\le k\le b$,

\smallskip

\item $q_{1k}=0$ for $k<b$,

\smallskip

\end{enumerate}
then $\alpha_{ijk}=0$ for $2\le i\le 2b-2$, $j\le 2b-2$ and $k<b$.
\end{lemma}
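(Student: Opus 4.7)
The plan is to reduce the vanishing of $\alpha_{ijk}$ on $k[X]$ to a vanishing of $\alpha_i\circ\alpha_j$ on $X^m$ for $m\ge 2$, then to build up the structural vanishings furnished by the earlier lemmas, and finally to dispose of the case $m=2$, which is the delicate one.

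First I would dispatch the easy cases. For $k=0$ the composition is trivially zero since $\alpha_0=0$; for $k=1$, $\alpha_{ij1}(X^n)=\alpha_{ij}(\delta_{n0})$ and $\alpha_{ij}(1)=\alpha_i(\delta_{j1})=\delta_{j1}\delta_{i1}=0$ because $i\ge 2$. For $2\le k\le b-1$, Lemma~\ref{nuevo} applied to hypothesis~(2) makes $\alpha_k(X^n)=0$ for $n\ge 2$, and hypothesis~(4) together with $\alpha_k(1)=0$ gives $\alpha_k(X)=q_{0k}+\sum_{m\ge 2}q_{mk}X^m$; so $\alpha_{ijk}(X^n)$ collapses to $\sum_{m\ge 2}q_{mk}\alpha_{ij}(X^m)$. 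A second application of Lemma~\ref{nuevo} handles the range $j\le b+1$, so the remaining task is to show that $\alpha_i(\alpha_j(X^m))=0$ for $m\ge 2$, $2\le i\le 2b-2$, and $b+2\le j\le 2b-2$.

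Next I would assemble the structural vanishings. A direct computation from hypotheses~(2), (3) and (4) yields $\alpha_{ij}=0$ for $2\le i\le b+1$ and $j<b$, and item~(2) of Lemma~\ref{alfasygammas} then furnishes $\gamma^{(l)}_{l+k_1}=\alpha_1^{l-1}\circ\alpha_{k_1+1}$ for $k_1<2b+8$ and $l\ge b+2$. I would then inductively invoke Lemma~\ref{diagonales} with $\rho_0=b+1$, $j_0=2$ and $a$ running over $b,b+1,\dots,2b-4$: conditions~(1) and (2) of that lemma are hypotheses~(1) and (2) of the current one; condition~(3) is furnished by the previous step of the induction (using $\alpha_1^{l-1}=\alpha_1$); condition~(4) is supplied by Lemma~\ref{alfasygammas}~(2); and the ``moreover'' assumption is precisely the standing hypothesis. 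Each iteration gives $\alpha_{1,a+1}=0$ and extends $\gamma^{(l)}_s=\delta_{ls}\ev_0$ (for $l\ge b+2$) to $s\le l+a$. Upon completion I obtain $\alpha_{1k}=0$ for $2\le k\le 2b-3$, and $\gamma^{(l)}_s=\delta_{ls}\ev_0$ for $l\ge b+2$ and $s\le l+2b-4$.

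For $m\ge 3$ the result is immediate from the expansion $\alpha_j(X^m)=\sum_r\alpha_r(X^2)\gamma^{(r)}_j(X^{m-2})$ given by Theorem~\ref{teorema 2.1}~(3): the terms with $r\le b+1$ vanish because $\alpha_r(X^2)=0$, while for $r\ge b+2$ the extended $\gamma$-vanishing forces $\gamma^{(r)}_j(X^{m-2})=\delta_{rj}\ev_0(X^{m-2})=0$ since $m-2\ge 1$. The remaining case $m=2$ is the main obstacle. Here I would expand
\[
\alpha_i(\alpha_j(X^2))=\sum_{r,t}\alpha_{tr}(X)\,\gamma^{(t)}_i\bigl(\gamma^{(r)}_j(X)\bigr)
\]
via two applications of Theorem~\ref{teorema 2.1}~(3), use the extended vanishings to collapse the $r$-sum to $r\in[2,b+1]$ (since $\gamma^{(r)}_j(X)=\delta_{rj}\ev_0(X)=0$ for $r\ge b+2$ and $j\le 2b-2$), and exploit that $\alpha_{1k}=0$ for $k\le 2b-3$ forces $q_{0k}=0$, so that by hypothesis~(4) we have $\alpha_r(X)\in X^2k[X]$ for $2\le r\le b-1$. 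A parallel collapse (via $\gamma^{(t)}_i=\delta_{ti}\ev_0$ for $t\ge b+2$) confines the effective $t$-sum and leaves only a few surviving summands whose vanishing must be traced through the known equalities $\alpha_{tr}=0$ (for $2\le t\le b+1$, $r<b$) and the constant-term analysis of $\gamma^{(r)}_j(X)$ built up in step two. Completing this bookkeeping is the hard part of the proof.
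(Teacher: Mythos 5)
Your first half tracks the paper's argument: the reduction of $\alpha_{ijk}=0$ to $\alpha_{ij}(X^m)=0$ for $m\ge 2$ and $b+2\le j\le 2b-2$ is sound, the fact $\alpha_{ij}=0$ for $2\le i\le b+1$, $j<b$ is the paper's first step, and the iterated use of Lemma~\ref{diagonales} (with $\rho_0=b+1$, $j_0=2$, $a=b,\dots,2b-4$, fed by Lemma~\ref{alfasygammas}~(2)) to get $\alpha_{1k}=0$ for $k\le 2b-3$ is exactly what the paper does. But already here there is a hole: for the \emph{first} application ($a=b$) condition~(3) of Lemma~\ref{diagonales}, namely $\gamma_s^{(l)}=\delta_{ls}\ev_0$ for $l>b+1$ and $s<l+b$, is not ``furnished by the previous step'' --- there is none. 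The paper proves separately, from the recursion $\gamma_s^{(l+1)}=\sum_j\alpha_j\circ\gamma_{s-j}^{(l)}$ and a direct analysis of $\gamma_s^{(b)}$, the \emph{stronger} statement $\gamma_s^{(l)}=\delta_{ls}\ev_0$ for all $l\ge b$ and $s<l+b$. That stronger range $l\ge b$ (rather than your $l\ge b+2$) is not a luxury: in your $m=2$ expansion the collapse of the $r$-sum leaves $r\in[2,b+1]$, and for $r\in\{b,b+1\}$ you have neither $\alpha_r(X)\in X^2k[X]$ (hypothesis~(4) stops at $r<b$) nor $\alpha_{tr}=0$ (known only for $r<b$); the only way to kill those summands is $\gamma_j^{(r)}(X)=\delta_{rj}\ev_0(X)=0$ for $r\ge b$, i.e.\ precisely the fact you did not establish.

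The more serious problem is that the $m=2$ case --- which you correctly identify as the crux --- is left as ``bookkeeping.'' This is where essentially all the work of the paper's proof lives. After reducing to $\alpha_j(X^2)=\sum_{l=2}^{b-1}\alpha_l(X)\gamma_j^{(l)}(X)$, one must show $\alpha_j(X^2)\in X^3k[X]$, and since $\alpha_l(X)\in X^2k[X]$ for $l<b$ this amounts to proving $\gamma_j^{(l)}(X)\in Xk[X]$ for $2\le l\le b-1$ and $j\le 2b-2$; then $\alpha_i$ with $i\le 2b-2$ annihilates $X^3k[X]$ because $\alpha_i(X^r)=0$ for $r\ge 3$. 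Proving $\gamma_j^{(l)}(X)\in Xk[X]$ is not routine: it needs the auxiliary closure property $\alpha_n\bigl(X^2k[X]\bigr)\subseteq X^2k[X]$ for $2\le n\le 2b-2$ (itself derived from the vanishings already collected) and a case split on the terms $\alpha_{n_1\dots n_l}(X)$ according to whether $n_2<b$ or $b\le n_2\le 2b-2$, the latter forcing $n_1<b-1$. None of this appears in your outline, so as written the proposal stops exactly where the proof becomes nontrivial.
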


\begin{proof} In order to prove the lemma we will need to establish some auxiliary facts. We
first prove that
\begin{equation}
\alpha_{ij} = 0\qquad\text{for $2\le i \le b+1$ and $j<b$.}\label{eqT}
\end{equation}
This is immediate when $j=1$, since $\alpha_1 = \ev_0$ and $\alpha_i(1) = 0$. Thus, we can
assume $j>1$. By item~(2) and Lemma~\ref{nuevo},
\begin{equation}
\alpha_h(X^r) = 0\quad\text{for all $h\le b+1$ and $r\ge 2$}\label{eqP}
\end{equation}
and so $\alpha_{ij}(X^r) = 0$ for $r\ge 2$. Since also $\alpha_{ij}(1) = 0$, we are reduced to
prove that $\alpha_{ij}(X) = 0$. By item~(3) and the fact that $\alpha_1 = \ev_0$,
$$
0 = \alpha_{1j}(X) = \sum_{i\ge 0} q_{ij} \alpha_1(X^i) = q_{0j}.
$$
Hence, by item~(4) we have $\alpha_j(X)\in X^2k[X]$. Thus, applying~\eqref{eqP} with $i$
instead of $h$, we obtain $\alpha_{ij}(X) = 0$. We now prove that
\begin{equation}
\gamma^{(l)}_s = \delta_{ls}\ev_0\qquad\text{for $l\ge b$ and $s<l+b$.}\label{eqW}
\end{equation}
Since
$$
\gamma^{(l+1)}_s = \sum_{j=1}^s \alpha_j\circ \gamma^{(l)}_{s-j}\ ,
$$
it will be sufficient to prove this for $l = b$ and $s<2b$. But using that $\alpha_{1j}=
\alpha_{j1}=0$ for $2\le j\le b$ and $\alpha_{ij}=0$ for $2\le i\le b$ and $j<b$, it is easy to
see that
$$
\gamma^{(b)}_s =\sum_{|i_1,\dots, i_b| = s} \alpha_{i_1\cdots i_b} = \delta_{bs}\ev_0.
$$
Finally, we also need to check that
\begin{equation}
\alpha_{1k}=0\qquad\text{for $k=2,\dots,2b-3$.}\label{eqR}
\end{equation}
By item~(3) we must show that
$$
\alpha_{1k}=0\qquad\text{for $k=b+1,\dots,2b-3$.}
$$
To check this it suffices to prove by induction on $k=b,\dots,2b-3$, that
\begin{equation}
\alpha_{1k}=0\qquad\text{and}\qquad \gamma_s^{(l)}=\delta_{ls}\ev_0\label{eqC}
\end{equation}
for $l>b+1$ and $s<l+k$. By item~(3) and~\eqref{eqW}, we know that~\eqref{eqC} is true for
$k=b$. Suppose it is true for $k \le k_1$, where $b\le k_1<2b-3$. In order to perform the
inductive step it suffices to apply Lemma~\ref{diagonales} with $a=k_1$, $\rho_0=b+1$ and
$j_0=2$. Conditions~(1) and~(2) of that lemma are the corresponding assumptions of the present
lemma and condition~(3) follows from the inductive hypothesis. The fact that condition~(4) is
also satisfied (i.e. $\gamma^{(l)}_{l+k_1}=\alpha_1\circ \alpha_{k_1+1}$ for all $l>b+1$),
follows from item~(2) of Lemma~\ref{alfasygammas}, which applies thanks to the inductive
hypothesis and~\eqref{eqT}.

We now are ready to prove the thesis. By item(2) of Theorem~\ref{teorema 2.1}, item~(2) and
Lemma~\ref{nuevo}, we only need to check that
$$
\alpha_{ijk}(X)=0\quad\text{for $2\le i\le 2b-2$, $j\le 2b-2$ and $k<b$.}
$$
For this will be sufficient to prove that
\begin{xalignat}{2}
&\alpha_i(X^r)=0 &&\quad\text{for $i\le 2b-2$ and $r\ge 3$,}\label{eqG}\\
&\alpha_{jk}(X)\in X^3k[X]&&\quad\text{for $j\le 2b-2$ and $k<b$.}\label{eqH}
\end{xalignat}
From~\eqref{eqW} and item~(2), it follows that the hypothesis of
Lemma~\ref{diagonalestriviales} are satisfied with $\rho_0 = b+1$, $j_0 = 2$ and $a = b$.
Thus~\eqref{eqG} is true. We next prove~\eqref{eqH}. Let $k< b$ and $j\le 2b-2$. Since $q_{0k}
= \alpha_{1k}(X) = 0$ for $k\le b$, from~\eqref{eqG} we get
$$
\alpha_{jk}(X) = \sum_{r\ge 1} q_{rk}\alpha_j(X^r) = q_{1k}\alpha_j(X) + q_{2k}\alpha_j(X^2).
$$
But $q_{1k} = 0$ for $k<b$ by item~(4). Thus $ \alpha_{jk}(X) = q_{2k}\alpha_j(X^2)$, and so we
are reduced to prove that $\alpha_j(X^2)\in X^3k[X]$. Now,
\begin{equation}
\alpha_j(X^2) = \sum_{l=2}^j \alpha_l(X)\gamma_j^{(l)}(X) = \sum_{l=2}^{\min(j,b-1)}
\alpha_l(X) \gamma_j^{(l)}(X),\label{eqI}
\end{equation}
where the first equality follows from the fact that $\alpha_0 = 0$ and $\alpha_1(X) = 0$, and
the second one from the fact that $\gamma_j^{(l)}= \delta_{jl}\ev_0$ for $b\le l\le j\le 2b-2$,
by~\eqref{eqW}. Since $q_{0l}=q_{1l} = 0$, we know that
\begin{equation}
\alpha_l(X)\in X^2k[X]\quad\text{for $l<b$.}\label{eqJ}
\end{equation}
So, if we prove that $\gamma_j^{(l)}(X)\in Xk[X]$ for $j\le 2b-2$ and $2\le l\le \min(j,b-1)$,
the fact that $\alpha_j(X^2)\in X^3k[X]$ follows. Clearly $\gamma_l^{(l)}(X)= 0$. So we can
assume $j>l$. By definition
$$
\gamma_j^{(l)}(X)= \sum_{|n_1,\dots, n_l|=j}\alpha_{n_1\dots n_l}(X).
$$
By~\eqref{eqR} we know that $\alpha_{1k}=0$ for $1<k < 2b-2$. Thus, if $\alpha_{n_1\dots
n_l}(X) \ne 0$, then there cannot be any $n_i=1$.  Write
$$
\alpha_{n_3\dots n_l}(X)= \sum_{i=0}^u p_iX^i\quad\text{and}\quad \alpha_{n_2\dots n_l}(X)=
\sum_{i=0}^v p'_iX^i.
$$
To finish the proof we will use that
\begin{equation}
\alpha_n(X^2k[X]))\subseteq X^2k[X]\qquad\text{for $2\le n \le 2b-2$},\label{eqK}
\end{equation}
which follows immediately from the following facts:
\begin{alignat*}{2}
& \alpha_n(X^r) = 0 \text{ for all $r>2$}&&\qquad\text{by~\eqref{eqG}},\\
& \alpha_n(X^2)\in X^2k[X]&&\qquad\text{by~\eqref{eqI} and~\eqref{eqJ}.}
\end{alignat*}
We consider two cases: If $n_2<b$, then by the fact that $\alpha_{n_2}(1) = 0$ and
equalities~\eqref{eqJ}, \eqref{eqK},
$$
\alpha_{n_1\dots n_l}(X)= \sum_{i=0}^u p_i\alpha_{n_1n_2}(X^i)\in X^2k[X].
$$
If $b\le n_2\le 2b-2$, then $n_1<b-1$, and so by the fact that $\alpha_{n_1}(1) = 0$, and
equalities~\eqref{eqJ}, \eqref{eqK},
$$
\alpha_{n_1\dots n_l}(X)= \sum_{i=0}^v p'_i \alpha_{n_1}(X^i) \in X^2k[X],
$$
as desired.
\end{proof}

\begin{lemma}\label{previoanmayorquedos} Suppose there exists $m$ such that $\alpha_k=0$ for
all $k\ge m$. Recall that $\alpha_k(X) = \sum_i q_{ik} X^i$. Let $u\in \{1,2\}$. Then the
following facts hold:
\begin{enumerate}

\smallskip

\item $\alpha_l(X^u)=0$ for $l<\nu^u$,

\smallskip

\item $\gamma_s^{(l)} = \delta_{ls}\ev_0$ for $l\ge\nu^u$ and $s<l+(\nu-1)\nu^{u-1}$,

\smallskip

\item $\alpha_{1k}=0$ for $2\le k\le (\nu-1)\nu^{u-1}$,

\smallskip

\item $\alpha_{ij}=0$ for $j<\nu$ and $2\le i\le 1+(\nu-1)\nu$,

\smallskip

\item If $\nu=2$ then $\alpha_{ijk}=0$ for $2\le i\le 2\nu-2$, $j\le 2\nu-2$ and $k<\nu$,

\smallskip

\item For $u\ge2$, we have $\gamma_{\nu^u}^{(\nu^{u-1})} = \alpha_{\nu}^{\nu^{u-1}} +\alpha_1\xcirc
\alpha_{(\nu-1)\nu^{u-1}+1}$

\smallskip

\item $\alpha_{\nu^2}(X^2) = q_{1\nu} = q_{0,(\nu-1)\nu+1} = 0.$

\end{enumerate}
\end{lemma}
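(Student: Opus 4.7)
The plan is to split the proof into three stages: (a) direct verification of items~(4), (5) and the $u=1$ cases of (1)--(3); (b) a single invocation of Lemma~\ref{curvas} to cover the $u=2$ cases of (1), (2), (3), (6); and (c) a contradiction argument, iterating Lemma~\ref{curvas}, for item~(7), which is the crux. For stage~(a), each composition $\alpha_{ij}$ in item~(4) factors through either the zero map (when $j=0$ or $2\le j<\nu$) or the constant-valued $\alpha_1=\ev_0$ (when $j=1$); since $\alpha_i(c)=c\,\delta_{i,1}$ for constants $c$, this forces $\alpha_{ij}=0$ for $i\ge 2$. Item~(5) is analogous: $\alpha_1$ lands in constants, so $\alpha_{ij,1}(P)$ reduces to $P(0)\alpha_{ij}(1)$, which vanishes unless both $i$ and $j$ equal $1$. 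For item~(2) with $u=1$, every nonzero summand of $\gamma_s^{(l)}=\sum_{|n_1,\dots,n_l|=s}\alpha_{n_1\cdots n_l}$ has each $n_i$ equal to $1$ or at least $\nu$, so $s-l<\nu-1$ permits only $s=l$, with value $\alpha_1^l=\ev_0$. Items~(1) and~(3) for $u=1$ are immediate from the definition of $\nu$.

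For stage~(b) I apply Lemma~\ref{curvas} with $j_0=1,\,b=\nu$. Its hypotheses (1)--(6) are precisely what stage~(a) provided; hypothesis~(7) is vacuous since $j_0=1$; and the blanket quantifier ``for each $a,j,k$ there is $i$ with $\alpha_{k+ia}(X^{j+i})=0$'' is automatic from the upper-boundedness $\alpha_k=0$ for $k\ge m$. Conclusions (8)--(11) of Lemma~\ref{curvas} are then exactly items (1), (2), (3), (6) of the present lemma for $u=2$.

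The main obstacle is item~(7). Combining item~(6) for $u=2$, item~(3) of Theorem~\ref{teorema 2.1}, the vanishing $\gamma_{\nu^2}^{(r)}=\delta_{\nu^2,r}\ev_0$ for $r>\nu$ (from item~(2) for $u=2$), and the elementary recursion $\alpha_\nu^k(X)=q_{1\nu}^{k-1}\alpha_\nu(X)$ (which follows from $q_{0\nu}=0$ and the identities $\alpha_\nu(X^n)=0$ for $n\ge 2$, both already in hand), I will derive
\[
\alpha_{\nu^2}(X^2)=\alpha_\nu(X)\bigl(q_{1\nu}^{\nu-1}\alpha_\nu(X)+q_{0,(\nu-1)\nu+1}\bigr).
\]
Once $\alpha_{\nu^2}(X^2)=0$ is available, matching the constant and linear coefficients in $X$ of the bracketed factor forces $q_{0,(\nu-1)\nu+1}=0$ and $q_{1\nu}^\nu=0$, so $q_{1\nu}=0$ since $k$ is a domain. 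All of item~(7) therefore reduces to proving $\alpha_{\nu^2}(X^2)=0$.

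I will prove this vanishing by contradiction and iteration. Assume $\alpha_{\nu^2}(X^2)\ne 0$; then Lemma~\ref{curvas} applies at $j_0=2,\,b=\nu$: hypothesis~(1) is the assumption, (2)--(6) are inherited from stage~(b), and (7) is item~(6) for $u=2$. Its conclusion (12a) reads $\gamma_{\nu^2}^{(\nu)}=\alpha_\nu^\nu$, which when subtracted from item~(6) for $u=2$ yields $q_{0,(\nu-1)\nu+1}=0$; the displayed identity then becomes $\alpha_{\nu^2}(X^2)=q_{1\nu}^{\nu-1}\alpha_\nu(X)^2\ne 0$, forcing $q_{1\nu}\ne 0$. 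Conclusion (12b) provides the recursion $\alpha_{\nu^{j+1}}(X^{j+1})=\alpha_{\nu^j}(X^j)\,\gamma_{\nu^{j+1}}^{(\nu^j)}(X)$, and at each new stage the same kill-the-residual-$q_{0,\cdot}$ argument shows $\gamma_{\nu^{j+1}}^{(\nu^j)}(X)=q_{1\nu}^{\nu^j-1}\alpha_\nu(X)$. A degree-and-leading-coefficient check then gives that $\alpha_{\nu^j}(X^j)$ is a nonzero scalar multiple of $\alpha_\nu(X)^j$ for every $j\ge 2$, contradicting $\alpha_k=0$ for $k\ge m$ once $\nu^j\ge m$. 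The delicate point, where care is needed, is to verify that the full hypothesis list of Lemma~\ref{curvas} propagates through this induction: hypothesis~(7) at stage $j+1$ is conclusion~(11) at stage $j$; hypotheses (2)--(4) come from conclusions (8)--(10); and (5), (6) are stable in $j_0$.
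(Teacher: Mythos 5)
Your proposal is correct and follows essentially the same route as the paper: direct verification of the base cases, iterated application of Lemma~\ref{curvas} with $j_0=u$ and $b=\nu$ (propagating its hypotheses via its own conclusions), and extraction of $q_{1\nu}=q_{0,(\nu-1)\nu+1}=0$ from the factorization $\alpha_{\nu^2}(X^2)=\alpha_\nu(X)\bigl(q_{1\nu}^{\nu-1}\alpha_\nu(X)+q_{0,(\nu-1)\nu+1}\bigr)$. The only difference is cosmetic — the paper introduces the first index $u_0$ with $\alpha_{\nu^{u_0}}(X^{u_0})=0$ and shows $u_0=2$, whereas you run the same iteration as a reductio; just take care to establish hypothesis~(1) of Lemma~\ref{curvas} at stage $j+1$ (which follows from $q_{1\nu}\ne 0$ and $\deg\alpha_\nu(X)>0$ alone, before killing the residual constant) so the induction is not circular.
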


\begin{proof} By hypothesis there exists $u_0$ such that $\alpha_{\nu^{u_0}}(X^{u_0}) = 0$ and
$\alpha_{\nu^u}(X^u) \ne 0$ for $1\le u < u_0$. By the discussion preceding
Lemma~\ref{alfasygammas} we know that $u_0\ge 2$.  Next we prove by induction on $1\le u< u_0$
that items~(1)--(6) are valid for $1\le u\le u_0$. Then we will see that $u_0=2$ and $q_{1\nu}
= q_{0,(\nu-1)\nu+1} = 0$. When $u = 1$ item~(6) is empty and items~(1), (3) and~(4) are
satisfied, since $\alpha_1(X)=0$, $\alpha_l=0$ for $1<l<\nu$ and $\alpha_{i1} = 0$ for all
$i>1$. Item~(5) is a direct consequence of item~(4). Finally, by item~(1) of
Lemma~\ref{alfasygammas} we have $\gamma_{l+k}^{(l)} = \alpha_{1,k+1}=0$ for $1\le k<\nu-1$ and
$l\ge \nu$, from which item~(2) follows easily. Suppose the result is valid for a fixed
$u<u_0$. We will apply Lemma~\ref{curvas} with $j_0 = u$ and $b = \nu$. Note that in this case
$\rho_0 = \nu^u-1$, $\rho_1 = \nu^{u+1}-1$ and $\rho_{-1} = \nu^{u-1}-1$ (if $u\ge 2$).
Item~(1) of Lemma~\ref{curvas} is valid by the definition of $u_0$ and items~(2)--(7) are
items~(1)--(6) above. To perform the inductive step it suffices to note that items~(4) and~(5)
above do not depend on $u$, and that items~(8)--(11) of Lemma~\ref{curvas} implies items~(1),
(2), (3) and~(6) above with $u+1$ instead of $u$. Hence items~(1)--(6) are valid for $1\le u\le
u_0$. Moreover, item~(12) of Lemma~\ref{curvas} gives
\begin{equation}
\gamma_{\nu^u}^{(\nu^{u-1})} = \alpha_{\nu}^{\nu^{u-1}}\quad\text{and}\quad
\alpha_{\nu^{u+1}}(X^{u+1}) = \alpha_{\nu^u}(X^u) \gamma_{\nu^{u+1}}^{(\nu^u)}(X),\label{eqFF}
\end{equation}
for $2\le u<u_0$. Since $\alpha_{\nu^{u_0}}(X^{u_0}) = 0$ and
$\alpha_{\nu^{u_0-1}}(X^{u_0-1})\ne 0$, the last equality yields
\begin{equation}
\gamma_{\nu^{u_0}}^{(\nu^{u_0-1})}(X) = 0.\label{eqEE}
\end{equation}
In the sequel we are going to use the equality
\begin{equation}
q_{0k} = \sum_i q_{ik} \ev_0(X^i) = \alpha_{1k}(X)\qquad\text{for $k\ge 1$.}\label{eqBB}
\end{equation}
By this equality and item~(3) with $u=2$, we have $q_{0\nu} = 0$. Moreover item~(1) (also with
$u=2$) implies \hbox{$\alpha_l(X^2) = 0$} for all $l\le \nu$. So,
$$
\alpha_{\nu}(X^i) = \sum_{l=1}^{\nu}\alpha_l(X^2) \gamma^{(l)}_{\nu}(X^{i-2}) = 0\quad\text{for
all $i\ge 2$.}
$$
Thus, for $u< u_0$,
\begin{equation}
\alpha_{\nu}^{\nu^u}(X) = \sum_{i>0} q_{i\nu} \alpha_{\nu}^{\nu^u-1}(X^i) = q_{1\nu}
\alpha_{\nu}^{\nu^u-1}(X) = \dots = q_{1\nu}^{\nu^u-1} \alpha_{\nu}(X).\label{eqAA}
\end{equation}
Hence, by item~(6) with $u=u_0$ and equalities~\eqref{eqEE}, \eqref{eqBB} and \eqref{eqAA},
\begin{align*}
q_{1\nu}^{\nu^{u_0-1}-1} \alpha_{\nu}(X) + q_{0,(\nu-1)\nu^{u_0-1}+1} & = \alpha_{\nu}^{
\nu^{u_0-1}}(X) + \alpha_1\xcirc \alpha_{(\nu-1)\nu^{u_0-1}+1}(X)\\
& = \gamma_{\nu^{u_0}}^{\nu^{u_0-1}}(X)\\
& = 0.
\end{align*}
Since $\deg(\alpha_{\nu}(X))>0$ (because $q_{0\nu} =0$ and $\alpha_{\nu}(X)\ne 0$), this
implies
\begin{equation}
q_{0,(\nu-1)\nu^{u_0-1}+1} = q_{1\nu} = 0.\label{eqHH}
\end{equation}
Now, we claim that $u_0 = 2$ (that is $\alpha_{\nu^2}(X^2) = 0$). In fact, by~\eqref{eqFF},
\eqref{eqAA} and~\eqref{eqHH},
$$
\alpha_{\nu^2}(X^2) = \alpha_{\nu}(X) \gamma_{\nu^2}^{(\nu)}(X) = \alpha_{\nu}(X)
\alpha_{\nu}^{\nu}(X) = \alpha_{\nu}(X)^2 q_{1\nu}^{\nu-1} = 0,
$$
as we want.
\end{proof}

\begin{lemma}\label{nmayorquedos} Under the hypothesis of Lemma~\ref{previoanmayorquedos}, we
have:
$$
q_{0k} = q_{1k}=0 \quad\text{for all $k$.}
$$
\end{lemma}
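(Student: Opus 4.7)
The plan is to argue by contradiction, combining the substantial machinery established in Lemmas~\ref{previoanmayorquedos},~\ref{curvas},~\ref{diagonales}, and~\ref{diagonalestriviales} with the boundedness hypothesis $\alpha_k=0$ for $k\ge m$. The argument has two qualitative pieces: a finite base case supplied by the previous lemmas, and an inductive extension propagating vanishing to all remaining $k$.

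First I would assemble what is already known. Since $\alpha_0=0$ and $\alpha_1=\ev_0$, the claim is trivial for $k=0,1$. For $k\ge m$, both $q_{0k}$ and $q_{1k}$ vanish by boundedness. Item~(3) of Lemma~\ref{previoanmayorquedos}, applied with $u=2$, says that the composition $\alpha_{1k}=\alpha_1\xcirc\alpha_k=\ev_0\xcirc\alpha_k$ is the zero map for $2\le k\le(\nu-1)\nu$; evaluating at $X$ gives $q_{0k}=0$ throughout this range. Item~(7) extends this to $q_{0,\nu^2-\nu+1}=0$ and also supplies $q_{1\nu}=0$. So only the finite range $\nu^2-\nu+2\le k<m$ needs further treatment for $q_{0k}$, and the range $\{\nu+1,\dots,m-1\}$ for $q_{1k}$.

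For the propagation, the key identity comes from applying the ring homomorphism $\alpha_1=\ev_0$ to the basic recursion $\alpha_k(X\cdot X^n)=\sum_r\alpha_r(X)\gamma_k^{(r)}(X^n)$. Since $\ev_0$ is multiplicative and $\ev_0(\alpha_r(X))=q_{0r}$, this yields
\begin{equation*}
\alpha_{1k}(X^{n+1})=\sum_{r\ge\nu}q_{0r}\,\ev_0\bigl(\gamma_k^{(r)}(X^n)\bigr),
\end{equation*}
and the sum collapses to $r\ge\nu^2-\nu+2$ using the vanishing already in hand. A parallel identity, obtained by extracting the coefficient of $X$ (rather than the constant term) from the same recursion, controls $q_{1k}$. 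I would then suppose for contradiction that there is a minimal $k_0$ with $(q_{0k_0},q_{1k_0})\ne(0,0)$, and, as in Lemma~\ref{curvas}, propagate this nonvanishing through the identity to produce nonzero $\alpha_K(X^N)$ for arbitrarily large $K$, contradicting $\alpha_k=0$ for $k\ge m$.

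The hardest part will be the combinatorial bookkeeping: ensuring that the propagation continues instead of stalling, and verifying that the iterated hypotheses of Lemmas~\ref{diagonalestriviales},~\ref{diagonales}, and~\ref{curvas} remain valid after the initial iteration with $b=\nu$ terminated at $u_0=2$. I would therefore re-run the iteration with successive values of $b\ge\nu+1$, exploiting at each stage the newly acquired vanishing of some $q_{0k}$ or $q_{1k}$, so that the chain of forced nonvanishings eventually reaches an index $\ge m$, yielding the contradiction. The delicate point is that each new value of $b$ has to be chosen so that the hypotheses of the auxiliary lemmas hold with the updated partial information, and this tracking is where I expect most of the work to lie.
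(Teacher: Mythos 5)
Your overall strategy---re-running the iteration of Lemmas~\ref{diagonalestriviales}, \ref{diagonales} and~\ref{curvas} with successive values of $b$, harvesting new vanishing of $q_{0k}$ and $q_{1k}$ at each stage---is exactly the route the paper takes: its proof is a direct induction on $d\ge\nu$ of the three simultaneous statements $\alpha_k(X^2)=0$ for $k\le 1+(\nu-1)(d+1)$, $\alpha_{1k}=0$ for $2\le k\le(\nu-1)d+1$, and $q_{1k}=0$ for $k\le d$, with the inductive step built on Lemma~\ref{curvas} applied with $b=d+1$. Your base-case inventory (items~(3) and~(7) of Lemma~\ref{previoanmayorquedos} together with $q_{0k}=\alpha_{1k}(X)$) matches the paper's base case $d=\nu$.

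However, the proposal stops precisely where the proof begins, and what you defer as ``combinatorial bookkeeping'' is the entire content of the lemma. Concretely, the missing pieces are: (i) the formulation of an induction hypothesis strong enough to verify hypotheses~(1)--(7) of Lemma~\ref{curvas} at level $b=d+1$ (for $\nu=2$ hypothesis~(6) requires invoking Lemma~\ref{nigualados} with $b=d+1$, and there is a separate branch when $\nu=2$ and $\alpha_{d+3}(X^2)=0$, settled by a direct computation of $\gamma^{(2)}_{d+3}$); (ii) the sub-induction on $h$ locating the first index $h_0$ with $\alpha_{r_{h_0}(d+1)+1}(X^{h_0})=0$---the existence of $h_0$ is where boundedness enters---together with the argument that $h_0=2$; and (iii) the key algebraic step extracting $q_{1,d+1}=0$ and $\alpha_{1,(\nu-1)(d+1)+1}(X)=0$ from the identity $\gamma_{r_2+1}^{(r_1+1)}(X)=\alpha_{\nu}(X)\,q_{1,d+1}^{r_1}+\alpha_{1,(\nu-1)(d+1)+1}(X)$, which uses that $\alpha_{\nu}(X)$ has positive degree and that $k$ is a domain. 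Moreover, your proposed contradiction mechanism---propagating a nonzero $(q_{0k_0},q_{1k_0})$ to nonzero $\alpha_K(X^N)$ for arbitrarily large $K$---is not quite how the argument can run: the sequence $\alpha_{r_h+1}(X^h)$ can die for a reason unrelated to the minimality of $k_0$, namely because the connecting factor $\gamma_{r_h+1}^{(r_{h-1}+1)}(X)$ vanishes, and it is the analysis of that first vanishing (rather than its prevention) that produces the new identities. Finally, your auxiliary identity $\alpha_{1k}(X^{n+1})=\sum_r q_{0r}\,\ev_0\bigl(\gamma_k^{(r)}(X^n)\bigr)$ is correct but plays no role in the actual argument, and ``extracting the coefficient of $X$'' is not how $q_{1k}=0$ is obtained. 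As written, the proposal is a plausible plan, not a proof.
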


\begin{proof} We are going to prove by induction on $d$ that for all $d\ge\nu$, the following
statements are true
\begin{alignat}{2}
&\alpha_k(X^2)=0 &&\qquad \text{ for $k\le 1+(\nu-1)(d+1)$.}\label{eqA1}\\
&\alpha_{1k}=0 && \qquad\text{ for $2\le k\le (\nu-1)d+1$.}\label{eqA2}\\
& q_{1k} = 0 && \qquad\text{ for $k\le d$.}\label{eqA3}
\end{alignat}
This will be finish the proof of the lemma since $q_{0k} = \alpha_{1k}(X)$ by
equality~\eqref{eqBB}. Assume $d = \nu$. Since $\alpha_1 = \ev_0$ and $\alpha_k = 0$ for $2\le
k <\nu$, we have $q_{1k}= 0$ for $k<\nu$. Combining this with Lemma~\ref{previoanmayorquedos}
we obtain~\eqref{eqA3}. Equality~\eqref{eqA1} follows from items~(1) and~(7) of
Lemma~\ref{previoanmayorquedos}. It remains to check that equality~\eqref{eqA2} holds. For
$2\le k\le (\nu-1)\nu$ this follows from item~(3) of Lemma~\ref{previoanmayorquedos}. To finish
we must see that $\alpha_{1,(\nu-1)\nu+1}(X^r) = 0$ for all $r\ge 0$. For $r = 0$ this follows
from item~(2) of Theorem~\ref{teorema 2.1} and for $r=1$, from item~(7) of
Lemma~\ref{previoanmayorquedos} and the fact that $q_{0,(\nu-1)\nu+1} =
\alpha_{1,(\nu-1)\nu+1}(X)$. Finally, for $r\ge 2$,
$$
\alpha_{1,(\nu-1)\nu+1}(X^r) = \sum_{l = 1}^{(\nu-1)\nu+1}\alpha_{1l}(X)
\alpha_1(\gamma^{(l)}_{(\nu-1)\nu+1}(X^{r-1})) = 0,
$$
where the last equality follows from the fact that $\alpha_{1l}(X) = 0$ for $1\le l\le
(\nu-1)\nu+1$. Suppose that equalities~\eqref{eqA1}, \eqref{eqA2} and~\eqref{eqA3} are true
for~$d$. We claim that if $\nu = 2$, then
$$
\gamma^{(l)}_{l+h} = \delta_{0h}ev_0\qquad\text{for $l\ge 2$ and $h\le d$.}
$$
For $h=0$ this is immediate. Assume $h>0$. Since $\alpha_{k1} = 0$ for all $k\ge 2$ and
$\alpha_{1k} = 0$ for $2\le k\le d+1$, we have
$$
\gamma^{(l)}_{l+h} = \sum_{|n_1,\dots, n_l| = l+h\atop n_1,\dots,n_l\ge 2} \alpha_{n_1\dots
n_l}.
$$
Because of $h\le d$, each one of the indices $(n_1,\dots,n_l)$ in the above sum satisfies $2\le
n_j\le d < r_2(d)$ for all~$j$. Since, by Theorem~\ref{teorema 2.1}, Lemma~\ref{nuevo} and
equalities~\eqref{eqA1},~\eqref{eqA2} and~\eqref{eqA3},
\begin{alignat}{2}
& \alpha_h(1) = 0&& \qquad\text{for $h\ge 2$,}\label{eqA4}\\
& \alpha_h(X^j) = 0&& \qquad\text{for $h\le d+1$ and $j\ge 2$,}\label{eqA5}\\
&\alpha_{hl}(X) = 0 && \qquad\text{for $h\le d+1$ and $l\le d$,}\label{eqA6}
\end{alignat}
the claim is true. Hence, when $\nu = 2$,
\begin{align*}
\alpha_{d+3}(X^2) &= \alpha_2(X)\gamma^{(2)}_{d+3}(X)\\
& = \alpha_2(X)\bigl(\alpha_{1,d+2}(X) +\alpha_{2,d+1}(X) \bigr)\\
& = \alpha_2(X)\bigl(\alpha_{1,d+2}(X)+q_{1,d+1} \alpha_2(X)\bigr),
\end{align*}
where the second equality follows from~\eqref{eqA6} and the fact that $\alpha_1 = \ev_0$, and
the last one from~\eqref{eqA4} and~\eqref{eqA5}. Suppose $\alpha_{d+3}(X^2) = 0$ (that is,
equality~\eqref{eqA1} is valid for $d+1$). Then $\alpha_{1,d+2}(X) = q_{1,d+1} = 0$, since
$\deg(\alpha_2(X))>0$. In particular condition~\eqref{eqA3} is satisfied for $d+1$. Moreover,
$$
\alpha_{1,d+2}(X^i) = \sum_{l=1}^{d+2}\alpha_{1l}(X)\alpha_1\xcirc \gamma^{(l)}_{d+2}(X^{i-1})
= \sum_{l=1}^{d+1}\alpha_{1l}(X)\alpha_1\xcirc \gamma^{(l)}_{d+2}(X^{i-1}) = 0,
$$
for all $i\ge 1$, where the last equality follows from the fact that $\alpha_{11}(X)  = 0$ and
equality~\eqref{eqA2} for $d$. Since, by item~(2) of Theorem~\ref{teorema 2.1}, we also have
$\alpha_{1,d+2}(1) = 0$, condition~\eqref{eqA2} is also valid for $d+1$. Hence in the case that
$\nu=2$ and $\alpha_{d+3}(X^2)= 0$, we are done. For the rest of the proof we assume that
$\nu\ge 2$ and $\alpha_{d+3}(X^2)\ne 0$ if $\nu=2$. For all $h$, let $r_h = r_h(d+1)$. Note
that $r_1 = \nu-1$ and $r_2 = (\nu-1)(d+2)$. By hypothesis there exists $h_0$ such that
$\alpha_{r_{h_0}+1}(X^{h_0}) = 0$ and $\alpha_{r_h+1}(X^h) \ne 0$ for $1\le h <h_0$. By the
discussion preceding Lemma~\ref{alfasygammas} we know that $h_0\ge 2$. Next we prove by induction
on $1\le h< h_0$ that
\begin{enumerate}

\smallskip

\item $\alpha_l(X^h)=0$ for all $l\le r_h$,

\smallskip

\item $\gamma_s^{(l)} = \delta_{ls}\ev_0$ for all $l>r_h$ and $s<l+(\nu-1)(d+1)^{h-1}$,

\smallskip

\item $\alpha_{1k}=0$ for $2\le k\le (\nu-1)(d+1)^{h-1}$,

\smallskip

\item $\alpha_{ij}=0$ for $j\le d$ and $2\le i\le 1+(\nu-1)(d+1)$,

\smallskip

\item If $\nu=2$, then $\alpha_{ijk}=0$ for $2\le i\le 2d$, $j\le 2d$ and $k\le d$,

\smallskip

\item For $h\ge 2$, we have $\gamma_{r_h+1}^{(r_{h-1}+1)} = \alpha_{\nu}\circ
\alpha_{d+1}^{r_{h-1}} +\alpha_1\circ \alpha_{(\nu-1)(d+1)^{h-1}+1}$.

\smallskip

\end{enumerate}
for $1\le h\le h_0$. When $h=1$, items~(1), (2) and (3) do not depend on $d$ and they follow
from items~(1), (2) and (3) of Lemma~\ref{previoanmayorquedos}, respectively. Since item~(6)
does not apply, we only need to verify items~(4) and~(5). Note that by item~(2) of
Theorem~\ref{teorema 2.1}, Lemma~\ref{nuevo} and equality~\eqref{eqA1}, we have
\begin{equation}
\alpha_k(X^s) = 0 \qquad\text{for $2\le k\le 1+(\nu-1)(d+1)$ and $s\ne 1$.}\label{eqII}
\end{equation}
This immediately gives
$$
\alpha_{ij}(X^s) = 0\qquad\text{for $2\le j\le d$ and $s\ne 1$.}
$$
But, by~\eqref{eqA3} and~\eqref{eqII}
$$
\alpha_{ij}(X) = \sum_{s\ge 0} q_{sj}\alpha_i(X^s) = q_{1j}\alpha_i(X) = 0,
$$
for $j\le d$ and $2\le i\le 1+(\nu-1)(d+1)$. So,
$$
\alpha_{ij} = 0\qquad\text{for $j\le d$ and $2\le i\le 1+(\nu-1)(d+1)$,}
$$
since clearly $\alpha_{i1} = 0$ for $i\ge 2$. This establishes item~(4). Suppose $\nu = 2$. If
$d<3$, then item~(5) follows from item~(4), and if $d\ge 3$, from Lemma~\ref{nigualados} with
$b= d+1$, which applies because condition~(1) is satisfied since $\alpha_{d+3}(X^2)\ne 0$ by
assumption and conditions~(2), (3) and~(4) are equalities~\eqref{eqA1},~\eqref{eqA2}
and~\eqref{eqA3}. Assume that items~(1)--(6) are valid for a fix $h<h_0$. We will apply
Lemma~\ref{curvas} with $j_0 = h$ and $b = d+1$. Note that in this case $\rho_0 = r_h$, $\rho_1
= r_{h+1}$ and $\rho_{-1} = r_{h-1}$ (if $h\ge 2$). Item~(1) of Lemma~\ref{curvas} is valid by
the definition of $h_0$ and items~(2)--(7) are items~(1)--(6) above. To perform the inductive
step it suffices to note that items~(4) and~(5) above do not depend on $h$, and that
items~(8)--(11) of Lemma~\ref{curvas} imply items~(1), (2), (3) and~(6) above for $h+1$ instead
of $h$. So, we have established items~(1)--(6) above for $h+1$. Indeed, item~(9) of
Lemma~\ref{curvas} gives the following equality
\begin{equation}
\gamma_s^{(l)} = \delta_{ls}\ev_0\qquad\text{for all $l>r_h$ and $s<l+(\nu-1)(d+1)^h$,}
\label{eqpppp}
\end{equation}
which is stronger that item~(2) above for $h$ and also for $h+1$. Note that~\eqref{eqpppp} for
$h=1$ implies $\gamma_{r_2+1}^{(l)}(X)=0$ for $l>r_1+1 = \nu$. Hence,
\begin{equation}
\alpha_{r_2+1}(X^2) = \sum_{l=1}^{r_2+1}\alpha_l(X) \gamma_{r_2+1}^{(l)}(X) =
\alpha_{\nu}(X)\gamma_{r_2+1}^{(\nu)}(X),\label{eqJJ}
\end{equation}
since $\alpha_l(X)=0$ for $l<\nu$. Moreover, by equality~\eqref{eqII}, we have
$$
\alpha_{\nu}(X^s) = \alpha_{d+1}(X^s) = 0\qquad\text{for $s\ne 1$,}
$$
and so, arguing as in the proof of equality~\eqref{eqAA}, we obtain
\begin{equation}
\alpha_{\nu}\circ\alpha_{d+1}^l(X) = \alpha_{\nu}(X) q_{1,d+1}^l\quad\text{for all $l\ge
1$}.\label{eqKK}
\end{equation}
By item~(6) above for $h = h_0$ and equality~\eqref{eqKK}, we have
\begin{equation}
\gamma_{r_{h_0}+1}^{(r_{h_0-1}+1)}(X) = \alpha_{\nu}(X) q_{1,d+1}^{r_{h_0-1}} + \alpha_1\circ
\alpha_{(\nu-1)(d+1)^{h_0-1}+1}(X).\label{eqLL}
\end{equation}
We claim that $\alpha_{r_2+1}(X^2) = 0$ (Combining this with item~(1) above for $h=2$, we
obtain~\eqref{eqA1} for $d+1$). Assume on the contrary that $\alpha_{r_2+1}(X^2) \ne 0$ (which
implies $h_0\ge 3$). By~\eqref{eqKK} and item~(12) of Lemma~\ref{curvas} with $j_0 = 2$ and $b
= d+1$ (item~(1) of this lemma is satisfied since $\alpha_{r_2+1}(X^2) \ne 0$, and
items~(2)--(7) are items~(1)--(6) above for $h=2$), we have
\begin{equation}
\gamma_{r_2+1}^{(r_1+1)}(X) = \alpha_{\nu}\xcirc \alpha_{d+1}^{r_1}(X) = \alpha_{\nu}(X)
q_{1,{d+1}}^{r_1}.\label{eqMM}
\end{equation}
Combining this with~\eqref{eqJJ}, we obtain $\alpha_{r_2+1}(X^2) = \alpha_{\nu}(X)^2
q_{1,{d+1}}^{r_1}$, which implies $q_{1,d+1}\ne 0$. On the other hand, by item~(12) of
Lemma~\ref{curvas} with $j_0 = h_0-1$ and $b=d+1$, we have
$$
\alpha_{r_{h_0}+1}(X^{h_0}) = \alpha_{r_{h_0-1}+1}(X^{h_0-1})
\gamma_{r_{h_0}+1}^{(r_{h_0-1}+1)}(X).
$$
Since $\alpha_{r_{h_0}+1}(X^{h_0}) = 0$ and $\alpha_{r_{h_0-1}+1}(X^{h_0-1})\ne 0$, this and
equality~\eqref{eqLL} imply
$$
0 = \alpha_{\nu}(X) q_{1,{d+1}}^{r_{h_0-1}}+\alpha_{1,(\nu-1)(d+1)^{h_0-1}+1}(X).
$$
Since $deg(\alpha_{\nu}(X))>0$ and $\alpha_{1,(\nu-1)(d+1)^{h_0-1}+1}(X)\in k$, this yields
$q_{1,d+1}=0$, which contradicts the fact that $q_{1,d+1}\ne 0$. This proves the claim. Thus
$\alpha_{r_2+1}(X^2) = 0$ or, equivalently, $h_0=2$. This, together with equality~\eqref{eqJJ},
gives $\gamma_{r_2+1}^{(r_1+1)}(X) = 0$. Since $deg(\alpha_{\nu}(X))>0$, from
equality~\eqref{eqLL} it follows that
$$
q_{1,d+1} = 0\quad\text{and}\quad \alpha_{1,(\nu-1)(d+1)+1}(X) = 0.
$$
Combining the first equality with~\eqref{eqA3} for $d$, we get~\eqref{eqA3} for $d+1$. Finally,
by item~(2) of Theorem~\ref{teorema 2.1}, we know that $\alpha_{1,(\nu-1)(d+1)+1}(1) = 0$, and
for $r\ge 2$,
$$
\alpha_{1,(\nu-1)(d+1)+1}(X^r) = \sum_{l = 1}^{(\nu-1)(d+1)+1}\alpha_{1l}(X)
\alpha_1(\gamma^{(l)}_{(\nu-1)(d+1)+1}(X^{r-1})) = 0,
$$
where the last equality follows from item~(3) above for $h=2$ and the fact that
$$
\alpha_{11}(X) = \alpha_{1,(\nu-1)(d+1)+1}(X) = 0.
$$
All this, together with item~(3) for $h=2$, yields  equality~\eqref{eqA2} for $d+1$.
\end{proof}

\noindent{\bf Proof of Theorem~\ref{teorema 3.4}.}\enspace Item~(2) follows from item~(1) using
that $s$ is a twisting map if and only if $\tau\xcirc s\xcirc \tau$ is, where $\tau$ denotes
the flip. we now prove item~(1). Since $q_{i0} = q_{i1} = 0$ for all $i$, we have $\alpha_0(X)
= \alpha_1(X) = 0$. An inductive argument using formula~\eqref{eq0} and the fact that
$\alpha_0(1) = 0$ show that $\alpha_0 = 0$. Then, by Remark~\ref{remark 2.2}, we know that
$\alpha_1 = \ev_0$. So, we can apply Lemma~\ref{nmayorquedos} in order to obtain that $q_{0j} =
q_{1j} = 0$ for all $j\ge 0$, as we want.\hfill\qed

\end{document}